\def\ps@pprintTitle{%
  \let\@oddhead\@empty
  \let\@evenhead\@empty
  \let\@oddfoot\@empty
  \let\@evenfoot\@oddfoot
}
\theoremstyle{plain}
\newtheorem{proposition}{Proposition}
\newtheorem{corollary}{Corollary}
\theoremstyle{definition}
\newtheorem{definition}{Definition}
\newtheorem{remark}{Remark}
\newtheorem{example}{Example}
\newcommand*{\Bern}{\operatorname{Bern}}
\newcommand*{\N}{\operatorname{N}}
\newcommand*{\Unif}{\operatorname{Unif}}
\newcommand*{\IN}{\mathbb{N}}
\newcommand*{\IR}{\mathbb{R}}
\newcommand{\argmax}{\mathop{\rm argmax}\limits}
\newcommand{\argmin}{\mathop{\rm argmin}\limits}
\newcommand*{\Cov}{\operatorname{Cov}}
\newcommand*{\E}{\mathbb{E}}
\newcommand*{\esssup}{\operatorname*{esssup}}
\newcommand*{\essinf}{\operatorname*{essinf}}
\newcommand*{\Prob}{\Pr}
\newcommand*{\rd}{\mathrm{d}}
\newcommand*{\Var}{\operatorname{Var}}
\newcommand{\bp}{\bm{p}}
\newcommand{\bX}{\bm{X}}
\newcommand{\bz}{\bm{z}}
\newcommand{\bmu}{\bm{\mu}}
\newcommand{\bzero}{\bm{0}}
\newcommand{\bone}{\bm{1}}
\newcommand*{\aseq}{\ \smash{\omu{\text{\tiny{a.s.}}}{=}{}}\ }
\newcommand*{\deq}{\ \smash{\omu{\text{\tiny{d}}}{=}{}}\ }
\newcommand{\darrow}{\stackrel{d}{\longrightarrow}}
\renewcommand*{\i}{{-1}}
\newcommand*{\iidsim}{\ \smash{\omu{\text{\tiny{iid}}}{\sim}{}}\ }
\newcommand{\ou}[3]{%
  \mathrel{%
    \vcenter{\offinterlineskip
      \ialign{##\cr$#1$\cr\noalign{\kern-#3}$#2$\cr}%
    }%
  }%
}
\newcommand*{\omu}[3]{\underset{#3}{\overset{#1}{#2}}}
\begin{document}

\begin{frontmatter}

\title{Comparison of correlation-based measures of concordance in terms of asymptotic variance}

\author[1]{Takaaki Koike\corref{mycorrespondingauthor}}
\author[2]{Marius Hofert}

\address[1]{Graduate School of Economics, Hitotsubashi University,
Naka, Kunitachi, Tokyo, Japan.}
\address[2]{Department of Statistics and Actuarial Science,
Faculty of Science,
The University of Hong Kong,
Pokfulam, Hong Kong.}

\cortext[mycorrespondingauthor]{Corresponding author. Email address: \url{takaaki.koike@r.hit-u.ac.jp}}

\begin{abstract}
  We compare measures of concordance that arise as Pearson's linear correlation
  coefficient between two random variables transformed so that they follow the
  so-called concordance-inducing distributions.  The class of such transformed
  rank correlations includes Spearman's rho, Blom\-qvist's beta and van der
  Waerden's coefficient.  When only the standard axioms of
  measures of concordance are required, it is not always clear which transformed
  rank correlation is most suitable to use.  To address this question, we
  compare measures of concordance in terms of their best and worst asymptotic
  variances of some canonical estimators over a certain set of dependence
  structures.  A simple criterion derived from this approach is that
  concordance-inducing distributions with smaller fourth moment are more
  preferable.  In particular, we show that Blomqvist's beta is the optimal
  transformed rank correlation in this sense, and
  Spearman's rho outperforms van der Waerden's coefficient.  Moreover, we find
  that Kendall's tau, although it is not a transformed rank correlation of that nature, shares
  a certain optimal structure with Blomqvist's beta.
  \end{abstract}
  
\begin{keyword} 

Blomqvist's beta \sep 
Copula \sep
Correlation coefficient\sep
Kendall's tau \sep
Measure of concordance \sep 
Spearman's rho.
\MSC[2020] 
62H20\sep
62H12
\end{keyword}

\end{frontmatter}

\section{Introduction}\label{sec:introduction}

A pair of random variables is said to be more concordant (discordant) if large values of one random variable are more likely to correspond to large (small) values of the other random variable.
When the random variables are continuous, concordance and discordance are the properties of their (bivariate) $copula$ $C$, which is a bivariate distribution function with standard uniform univariate marginal distributions.
A measure of concordance quantifies concordance or discordance of $C$ by a single number $\kappa$ in $[-1,1]$; see Definition~\ref{def:axioms:MOC} below.

In this paper we consider $G$-transformed rank correlations, a subclass of measures of concordance which can be represented as Pearson's linear correlation coefficient $\rho$ between two random variables transformed so that they follow the so-called concordance-inducing distribution $G$.
For a distribution function $G$ on $\IR$, the $G$-transformed rank correlation of a copula $C$ is given by
\begin{align*}
\kappa_G(U,V)=\rho(G^\i(U),G^\i(V)), \quad (U,V)\sim C,
\end{align*}
where $G^\i$ is the quantile function of $G$.
This measure $\kappa_G$ satisfies the axioms of a measure of concordance~\citep{scarsini1984} by taking $G$ to be concordance-inducing; see~Definition~\ref{def:G:transformed:corr}.
This class contains popular measures of concordance, such as Spearman's rho $\rho_{\text{S}}$, Blomqvist's beta $\beta$ and van der Waerden's coefficient $\zeta$; see Example~\ref{ex:transformed:rank:correlations} below.
Such a correlation representation of a measure of concordance $\kappa$ is of great benefit to intuitively understand and explain the construction and ideas behind $\kappa$ (see~Fig.~\ref{fig:illustration}), to construct its estimators and investigate their asymptotic properties, and to analyze robustness \citep{raymaekers2021fast} and matrix compatibility \citep{hofert2019compatibility} of $\kappa$.
Moreover, \cite{koike2022matrix} showed that the class of transformed rank correlations exhausts all measures of concordance of the form $\kappa(U,V)=\rho(g_1(U),g_2(V))$ for two possibly discontinuous functions $g_1,\ g_2: (0,1)\rightarrow \IR$.

\begin{figure}[t]
  \centering
  \vspace{0mm}
  \includegraphics[width=15 cm]{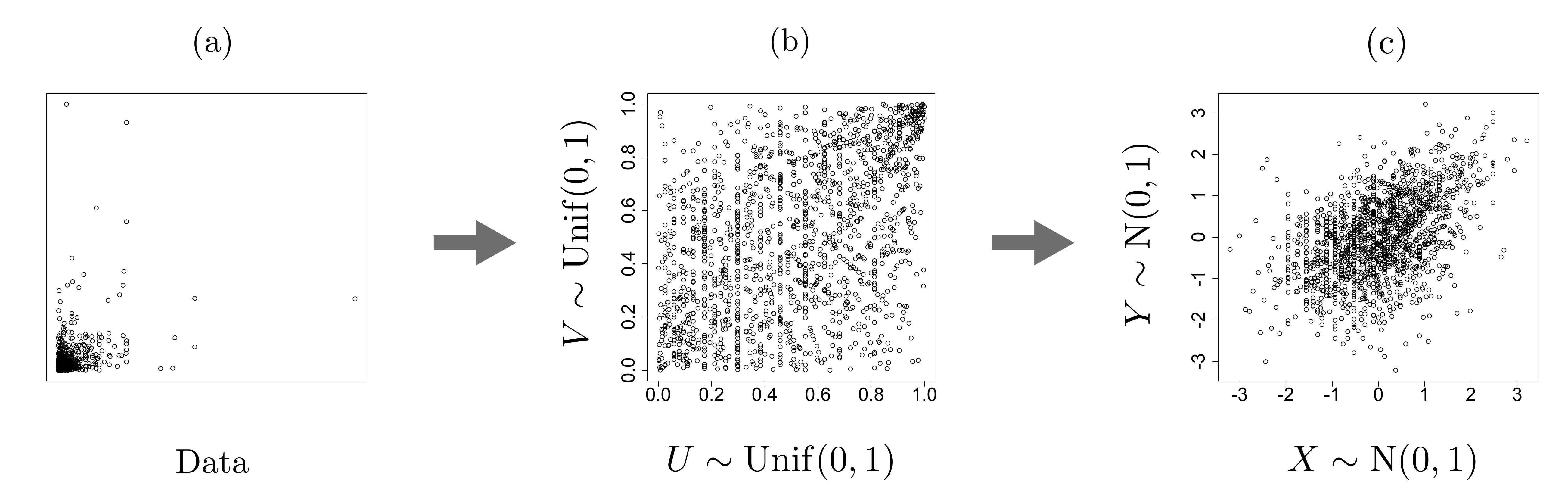}
  \vspace{0mm}
  \caption{Illustration of the process of calculating the $G$-transformed rank correlation $\kappa_G$ where $G$ is the standard normal distribution.
  The original data in (a) is first transformed so that their margins follow the standard uniform distribution.
  The transformed data in (b) is then converted so that their margins have the distribution $G$.
  Finally, the $G$-transformed rank correlation $\kappa_G$ is calculated as a Pearson's linear correlation coefficient of the transformed data in  (c).}
  \label{fig:illustration}
\end{figure}

When only the axioms of a measure of concordance are required, it is not always clear which transformed rank correlation is most suitable to use.
For the related literature, \cite{de2016comparing} compares Pearson's linear correlation coefficient and Spearman's rho by numerical experiments in terms of bias, variance and robustness to outliers.
Rank correlations are also extensively compared in \cite{tarsitano2009comparing}.
Various measures of concordance are compared in terms of their power in tests of independence; see, for example,~\cite{genest2005locally}, \cite{luigi1999asymptotic} and \cite{rodel2004linear}.

To address this natural question of how to choose the concordance-inducing distribution, the aim of the present paper is to compare $G$-transformed rank correlations in terms of their stability concerning statistical estimation.
Estimation of $\kappa_G$ is often inevitable since an explicit form of $\kappa_G(C)$ is not always available.
For the purpose of comparison, we consider a simplified setting where independent and identically distributed (i.i.d.) samples from the underlying copula $C$ are available, and the $G$-transformed rank correlation $\kappa_G$ is estimated by the so-called canonical estimator $\hat \kappa_G$ (Definition~\ref{def:canonical:estimator:kappa:G}).
Although this simplified setting may be rarely the case, the asymptotic variance $\sigma_G^2(C)$ of the canonical estimator $\hat \kappa_G$ is obtained in a tractable form.
For an underlying copula $C$, a concordance-inducing distribution $G$ can be more preferable to another one $G'$ in terms of the stability of statistical estimation if $\sigma_{G}^2(C)\leq \sigma_{G'}^2(C)$.
Since this comparison is valid only for a specific copula $C$, we consider a set of copulas $\mathcal D$ and compare concordance-inducing distributions by the largest and smallest values of  $\sigma_G^2(C)$ over $\mathcal D$.
Namely, a concordance-inducing distribution $G$ is more preferable to another one $G'$ in $\mathcal D$ if the worst and best asymptotic variances of a canonical estimator $\hat \kappa_G$ of $\kappa_G$ on $\mathcal D$ are smaller than those of $G'$.
For such $G$ and $G'$, if $\mathcal D$ represents possible dependence structures which the analyst is interested in quantifying and comparing, she may be more willing to use $\kappa_G$ instead if $\kappa_G'$ since the former
is expected to be estimated more accurately than the latter.

\begin{figure}[t]
  \centering
  \vspace{0mm}
  \includegraphics[width=12 cm]{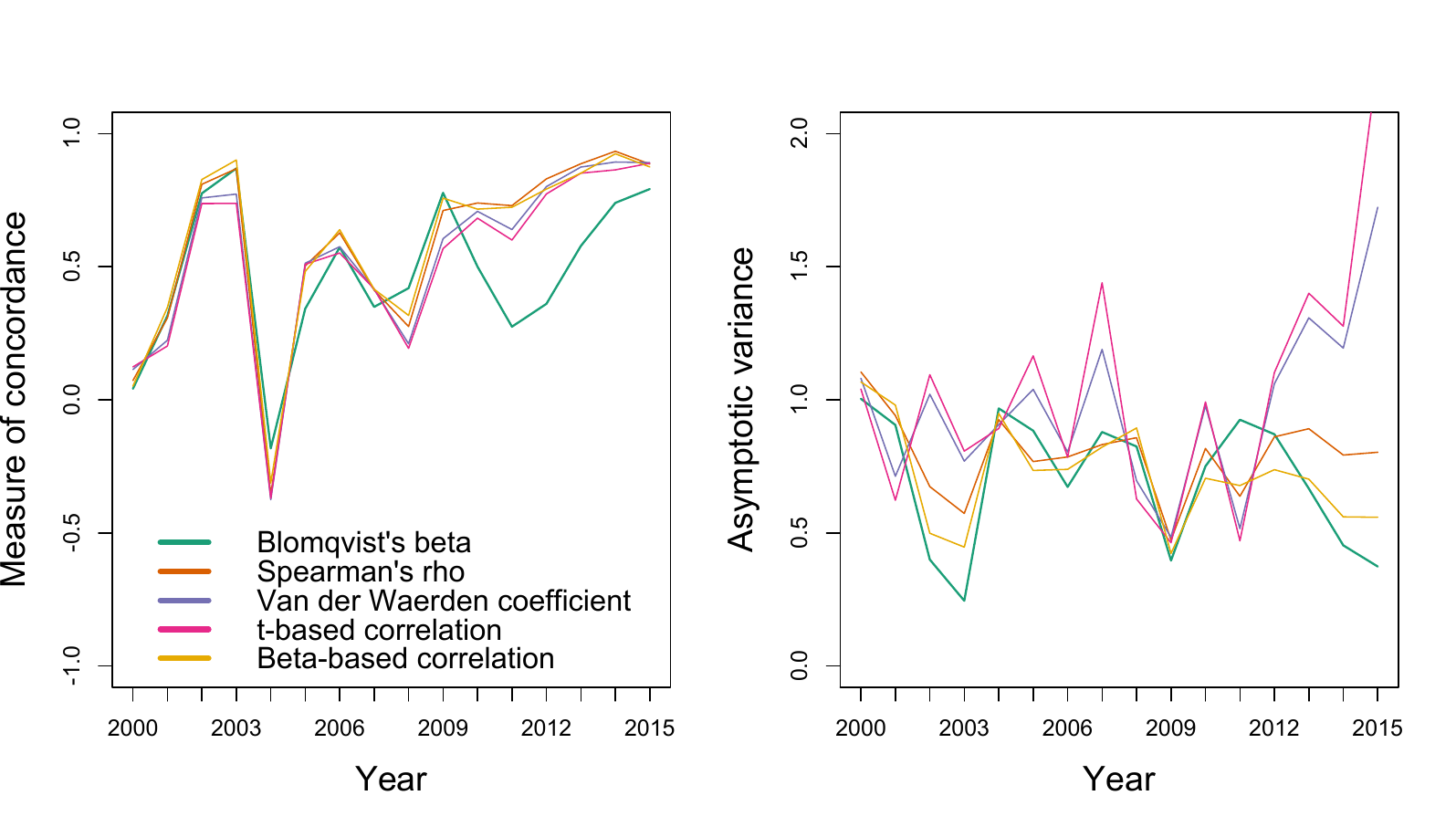}
  \vspace{0mm}
  \caption{Estimates of $G$-transformed rank correlations (left) and their asymptotic variances (right) of dependence among the daily log increments of exchange rates JPY$/$USD and CAD$/$USD filtered by skew $t$ GARCH(1,1) model each year in the period of 2000--2015.}
  \label{fig:empirical}
\end{figure}

To illustrate
the motivation of this work, let us consider the problem of quantifying
the dependence among two exchange rates JPY$/$USD and CAD$/$USD from 2000 to
2015. Following~\cite{abdullah2017modeling} and \cite{epaphra2016modeling}, we model
the daily log increments of each exchange rate in each year by a GARCH(1,1) model with skew-$t$
innovation distribution. 
We then capture the dependence among the two time
series by the copula of the bivariate standardized
residuals. Based on the filtered i.i.d.\ samples, a transformed rank correlation $\kappa_G$ is
estimated by the canonical estimator $\hat \kappa_G$ for different choices of
$G$. Other than $\beta$, $\rho_\text{S}$ and $\zeta$, a standardized Student $t$
distribution with $\nu=10$ degrees of freedom and a beta distribution with
parameter vector $(0.5,0.5)$ are also considered as concordance-inducing distribution $G$. The corresponding asymptotic variances $\sigma_G^2$ are estimated from
samples as in Section~\ref{sec:simulation:study}. The results are plotted in
Fig.~\ref{fig:empirical}, which also shows that the yearly trend of dependence is captured similarly by all the chosen measures of concordance, although some deviations are observed for $\beta$. Therefore, one
may be willing to choose measures that are more stable to estimate.
From the right plot in Fig.~\ref{fig:empirical}, $\zeta$ and $t$-based rank
correlations tend to have larger asymptotic variances than others, and $\beta$
often attains relatively small $\sigma_G^2$.  From these observations, $\beta$,
$\rho_\text{S}$ and the beta distribution-based rank correlations may be preferable over
$\zeta$ and the $t$ distribution-based rank correlation, at least in terms of the stability of
their statistical estimation.

From this approach for comparing transformed rank correlations, we derive a
criterion under some reasonable assumptions on $\mathcal D$ that a
concordance-inducing distribution $G$ with smaller variance $\Var_G(X^2)$, where
$X\sim G$, is more preferable.  Therefore, heavy-tailed concordance-inducing
distributions, such as the Student $t$ distribution, are not preferable (at least in terms of asymptotic variance) as a
choice of $G$ in comparison to a normal distribution, which leads to van der Waerden's coefficient $\zeta$.  It is also deduced
that $\rho_{\text{S}}$ outperforms $\zeta$, and the beta distribution-based
transformed rank correlations are more preferable to $\rho_{\text{S}}$ in this sense. In
particular, we prove that Blomqvist's beta is the unique optimal transformed
rank correlation attaining the optimal best and worst asymptotic variances under
certain conditions on $\mathcal D$. As stated by
\cite{schmid2007nonparametric}, one of the advantages of $\beta$ over other
measures of concordance is that it admits an explicit form if the copula can be
written explicitly.  The optimality of $\beta$ provides its additional advantage
in terms of accuracy of its estimation.

Despite the advantages of $\beta$, its major drawback is that it depends only on
the local value $C(1/2,1/2)$ of the underlying copula $C$.  Considering this
drawback, we also compare $G$-transformed rank correlations with Kendall's tau
$\tau$.  Based on the representation of $\tau$ in terms of Pearson's linear
correlation coefficient, we find that $\tau$ also attains the optimal best and
worst asymptotic variances that $\beta$ does.  Therefore, $\tau$ can be a good
alternative to $\beta$ in terms of the accuracy of its estimation even though the
optimality of $\tau$ is partly violated if the sample size required to construct
its estimator is taken into account (see
Remark~\ref{remark:comparability:kappa:g:tau} below).  Finally, in a simulation
study, we find that the choice of concordance-inducing distribution $G$ and the
strength of dependence of the underlying copula $C$ affect the asymptotic
variance of $\hat \kappa_G$ more than the model of dependence.

This paper is organized as follows.  In
Section~\ref{sec:correlation:based:mocs}, we review measures of concordance, the
class of $G$-transformed rank correlations $\kappa_G$ and their basic
properties.  In Section~\ref{sec:estimation:kappa:g:comparison} we introduce the
approach for comparing $G$-transformed rank correlations in terms of their
asymptotic variances.  A canonical estimator of $\kappa_G$ is presented in
Section~\ref{subsec:canonical:estimator:kappa:G}, and the properties of its
asymptotic variance $\sigma_G^2(C)$ are studied in
Section~\ref{subsec:properties:asymptotic:variance}.
Section~\ref{subsec:optimal:location:shift} addresses effects of location-scale
transforms of $G$ on $\sigma_G^2(C)$.  In
Section~\ref{sec:optimal:concordance:inducing:distributions}, we study optimal
best and worst asymptotic variances and their attaining concordance-inducing
distributions on $\mathcal D$.  The case when $\mathcal D$ is a set of
fundamental or Fr\'echet copulas is analyzed in
Section~\ref{subsec:asymptotic:variance:fundamental:copulas}.  The optimality of
Blomqvist's beta is proved in Section~\ref{subsec:optimality:Blomqvist}, and its
uniqueness is discussed in Section~\ref{subsec:uniqueness:optimality:beta}.
Kendall's tau and $G$-transformed rank correlations are compared in
Section~\ref{sec:comaprison:kendall:tau}.  In
Section~\ref{sec:simulation:study}, a simulation study is conducted to compare
the asymptotic variances for various parametric copulas and concordance-inducing
distributions.  Section~\ref{sec:concluding:remark} concludes this work with
discussions about directions for future research.
Proofs of the statements are given in Section~\ref{sec:proofs}.

\section{Correlation-based measures of concordance}\label{sec:correlation:based:mocs}

Let $\mathcal C_2$ be the set of all bivariate copulas, that is, all bivariate distribution functions with standard uniform marginal distributions.
We call $C' \in \mathcal C_2$ more concordant than $C \in \mathcal C_2$, denoted by $C\preceq C'$, if $C(u,v)\leq C'(u,v)$ for all $(u,v) \in [0,1]^2$.
The survival function of $C$ is denoted by $\bar C(u,v)=\Prob(U>u,V>v)$, $(u,v)\in[0,1]^{2}$, where $(U,V)\sim C$.
The comonotonicity, counter-monotonicity and independence copulas are denoted by $M(u,v)=\min(u,v)$, $W(u,v)=\max(u+v-1,0)$ and $\Pi(u,v)=uv$, $(u,v)\in [0,1]^2$, respectively.
By the Fr\'echet--Hoeffding inequalities, it holds that $W\preceq C \preceq M$ for all $C \in \mathcal C_2$.

Consider a group of transforms on $\mathcal C_{2}$:
$$\mathcal T = \{\iota,\, \nu_{1}, \,\nu_{2},\, \nu_1\circ \nu_2, \,\pi, \,\pi\circ \nu_{1},\,\pi\circ \nu_{2},\,\pi\circ \nu_{1}\circ \nu_{2}\},$$
where $\iota:\mathcal C_2 \rightarrow \mathcal C_2$ is the identity $\iota(C)=C$;
$\nu_1:\mathcal C_2 \rightarrow \mathcal C_2$ and $\nu_2:\mathcal C_2 \rightarrow \mathcal C_2$ are the partial reflections defined, respectively, by
\begin{align*}
\nu_1(C)(u,v)  =v-C(1-u,v),\quad
\nu_2(C)(u,v)  =u-C(u,1-v),\quad C \in \mathcal C_2;
\end{align*}
their composition is given by $\nu_1\circ \nu_2(C)(u,v)=u+v-1+C(1-u,1-v)$; and $\pi:\mathcal C_2 \rightarrow \mathcal C_2$ is the permutation $\pi(C)(u,v)=C(v,u)$, for $(u,v) \in [0.1]^{2}$.
Let $C_{\varphi} = \varphi(C)$ for $\varphi  \in \mathcal T$, and
denote by
\begin{alignat*}{3}
(U_{\nu_1},V_{\nu_1})&\aseq (1-U,V)\sim C_{\nu_1},&\qquad
(U_{\nu_2},V_{\nu_2})&\aseq (U,1-V)\sim C_{\nu_2},\\
(U_{\pi},V_{\pi}) &\aseq (V,U)\sim C_{\pi},&\qquad
(U_{\nu_1\circ \nu_2},V_{\nu_1\circ \nu_2})&\aseq (1-U,1-V)\sim C_{\nu_1\circ \nu_2},
\end{alignat*}
for $(U,V)\sim C$, where ``a.s.'' stands for almost surely.

For any map $\kappa:\mathcal C_2\rightarrow \IR$, we identify $\kappa(C)$ with $\kappa(U,V)$ for a random vector $(U,V)\sim C$ defined on a fixed atomless probability space $(\Omega,\mathcal A,\Prob)$.
A map $\kappa$ on $\mathcal C_2$ is called a measure of concordance if it satisfies the following seven axioms~\citep{scarsini1984}.

\begin{definition}[Axioms for measures of concordance]\label{def:axioms:MOC}
A map $\kappa:\mathcal C_2 \rightarrow \IR$ is called a measure of concordance if it satisfies the following seven axioms:
  \begin{enumerate}
  \item \emph{Domain}: $\kappa(C)$ is defined for any $C \in \mathcal C_2$;
  \item \emph{Symmetry}: $\kappa(C_{\pi})=\kappa(C)$ for any $C \in \mathcal C_2$;
  \item \emph{Monotonicity}: If $C\preceq C'$ for $C,C' \in \mathcal C_2$, then $\kappa(C)\le\kappa(C')$;
  \item \emph{Range}: $-1 \leq \kappa(C)\leq 1$ for any $C \in \mathcal C_2$, $\kappa(M)=1$ and $\kappa(W)=-1$;
  \item \emph{Independence}: $\kappa(\Pi)=0$;
  \item \emph{Change of sign}: $\kappa(C_{\nu_{1}}) =\kappa(C_{\nu_{2}})=-\kappa(C)$ for any $C \in \mathcal C_2$;
  \item \emph{Continuity}: Let $C_n \in \mathcal C_2$, $n\in\IN$, and
    $C  \in \mathcal C_2$ with $C_n$ converging
    pointwise to $C$ as $n\rightarrow \infty$. Then $\lim_{n\rightarrow \infty}\kappa(C_n)=\kappa(C).$
 \end{enumerate}
  \end{definition}

  Consider a class of maps on $\mathcal C_2$ written as
  $\kappa_{g_1,g_2}(U,V) = \rho(g_1(U),g_2(V))$, $(U,V)\sim C$, for two
  left-continuous functions $g_1,\ g_2:(0,1)\rightarrow \IR$.
  \cite{koike2022matrix} showed that for $\kappa_{g_1,g_2}$ to be a measure of
  concordance, it must be the so-called $G$-transformed rank correlation defined
  as follows.  For a univariate distribution function
  $G: \IR \rightarrow [0,1]$, the quantile function of $G$ is defined by
\begin{align*}
 G^\i(p)=\inf\{x\in\IR:G(x)\ge p\},\quad p\in(0,1).
\end{align*}
A multivariate distribution $H$ on $\IR^d$ with finite first moment is called radially symmetric if $\bX - \bmu \deq \bmu-\bX$ for some $\bmu \in \IR^d$, where $\bX \sim H$ and $\deq$ stands for equality in distribution.

\begin{definition}[$G$-transformed rank correlation]\label{def:G:transformed:corr}
For a univariate distribution function $G: \IR \rightarrow [0,1]$, the $G$-transformed rank correlation of $C \in \mathcal C_2$ is defined by
\begin{align*}
\kappa_{G}(C)=\rho(G^\i(U),G^\i(V)),\quad (U,V)\sim C.
\end{align*}
We call $G$ concordance-inducing if it is nondegenerate, radially symmetric with finite second moment.
The set of all concordance-inducing distributions is denoted by $\mathcal G$.
\end{definition}
The following proposition summarizes basic properties of $\kappa_G$; see \cite{hofert2019compatibility}.

  \begin{proposition}[Basic properties of $\kappa_G$]\label{prop:properties:g:transformed:correlations}
  For any $G\in \mathcal G$, the $G$-transformed rank correlation $\kappa_G$ satisfies the following properties:
  \begin{enumerate}
\item\label{item:kappa:g:is:moc} $\kappa_G$ is a measure of concordance;
\item\label{item:kappa:g:invariance:location:scale:transform} $\kappa_G$ is invariant under location-scale transforms of $G$, that is, $\kappa_{G_{\mu,\sigma}}(C)=\kappa_G(C)$ for all $C\in \mathcal C_2$, where $\mu\in \IR$, $\sigma>0$ and $G_{\mu,\sigma}(x)=G\left((x-\mu)/\sigma\right)$, $x \in \IR$;
\item\label{item:linearity:kappa:g} For $n \in \IN$, let $C_1,\dots,C_n \in \mathcal C_2$ and
  $\alpha_1,\dots,\alpha_n$ be non-negative numbers such that
  $\alpha_1+\cdots+\alpha_n = 1$. Then
  \begin{align*}
    \kappa_G\biggl(\,\sum_{i=1}^{n}\alpha_i C_i \biggr)=\sum_{i=1}^{n}\alpha_i \kappa_G(C_i).
  \end{align*}
  \end{enumerate}
  \end{proposition}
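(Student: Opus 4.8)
The plan is to reduce all three statements to a single representation of the covariance in the numerator of $\kappa_G$. Write $X=G^\i(U)$ and $Y=G^\i(V)$ for $(U,V)\sim C$; because a copula has standard uniform margins and $G^\i$ is the quantile transform, $X\sim G$ and $Y\sim G$, so both share the variance $v_G:=\Var_G(X)$, which is strictly positive (as $G$ is nondegenerate) and finite (as $G$ has finite second moment). Hence
\begin{align*}
\kappa_G(C)=\rho(X,Y)=\frac{\Cov(X,Y)}{v_G},
\end{align*}
where the denominator depends only on $G$. The central tool is Hoeffding's covariance identity, which here reads
\begin{align*}
\Cov(X,Y)=\int_{\IR^2}\bigl[\,C(G(x),G(y))-G(x)G(y)\,\bigr]\,\dd x\,\dd y,
\end{align*}
valid under the second-moment assumption and obtained from the Galois relation $G^\i(u)\le x\iff u\le G(x)$, which gives $\Prob(X\le x,Y\le y)=C(G(x),G(y))$ and $\Prob(X\le x)=G(x)$.

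For Part~\ref{item:kappa:g:is:moc} I would verify the seven axioms in turn. The domain axiom and the range $-1\le\kappa_G\le1$ hold because $\kappa_G$ is a genuine Pearson correlation (Cauchy--Schwarz), well defined by the moment conditions; symmetry is $\rho(Y,X)=\rho(X,Y)$; and independence follows from $\Cov(X,Y)=0$ when $(U,V)\sim\Pi$. For $\kappa_G(M)=1$, note $(U,V)\sim M$ forces $U=V$ a.s., hence $X=Y$ and $\rho(X,X)=1$. The remaining three axioms hinge on radial symmetry: if $G$ is symmetric about $\mu$ then $G^\i(1-U)=2\mu-G^\i(U)$ almost surely, so $(U,V)\sim W$ (i.e.\ $V=1-U$ a.s.) gives $Y=2\mu-X$ and $\rho(X,2\mu-X)=-1$, yielding $\kappa_G(W)=-1$, while the same identity gives change of sign via $\rho(X,2\mu-Y)=-\rho(X,Y)$. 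Monotonicity and continuity are read off the integral representation: if $C\preceq C'$ the integrand increases pointwise, so $\Cov$ and hence $\kappa_G$ is nondecreasing in $C$; and if $C_n\to C$ pointwise the integrands converge pointwise and are dominated by an integrable function built from the Fr\'echet--Hoeffding envelopes $W\le C_n\le M$, so dominated convergence gives $\kappa_G(C_n)\to\kappa_G(C)$.

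Parts~\ref{item:kappa:g:invariance:location:scale:transform} and~\ref{item:linearity:kappa:g} are short consequences of the same picture. For location-scale invariance, $G_{\mu,\sigma}^\i(p)=\mu+\sigma G^\i(p)$, so the transformed variables are $\mu+\sigma X$ and $\mu+\sigma Y$, and Pearson's correlation is invariant under a common increasing affine map ($\sigma>0$), giving $\kappa_{G_{\mu,\sigma}}=\kappa_G$. For linearity, substitute $C=\sum_i\alpha_iC_i$ into the integral representation; since $\sum_i\alpha_i=1$ one has $G(x)G(y)=\sum_i\alpha_iG(x)G(y)$, so the integrand splits as $\sum_i\alpha_i[C_i(G(x),G(y))-G(x)G(y)]$, and integrating gives $\Cov_C(X,Y)=\sum_i\alpha_i\Cov_{C_i}(X,Y)$; dividing by the common $v_G$ yields the claim.

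The main obstacle is the pair of axioms that genuinely use radial symmetry, $\kappa_G(W)=-1$ and change of sign, both of which rest on the quantile identity $G^\i(1-U)=2\mu-G^\i(U)$ holding almost surely for $U\sim\U(0,1)$. This must be justified even when $G$ is discrete or has gaps in its support, so that $G^\i$ is merely left-continuous, precisely the regime of Blomqvist's beta with $G=\Bern(1/2)$. I would deduce it from the symmetry of $G$ about $\mu$ by checking that $p\mapsto G^\i(1-p)$ and $p\mapsto 2\mu-G^\i(p)$ can disagree only on the countable set where $G^\i$ jumps, which $U$ avoids with probability one. By contrast the covariance representation itself is robust to discreteness, since the Galois relation holds verbatim, and the dominated-convergence step needs only that the Fr\'echet--Hoeffding envelopes are integrable, which is exactly the finiteness of the second moment of $G$.
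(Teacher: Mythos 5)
Your proof is correct, but note that the paper itself does not prove Proposition~\ref{prop:properties:g:transformed:correlations} at all: it is quoted from \citet{hofert2019compatibility}, so your blind argument is compared against a citation rather than an inline proof. What you supply is a sound, self-contained verification, and its organizing idea --- routing monotonicity, continuity and linearity all through Hoeffding's identity $\Cov(X,Y)=\iint\bigl[C(G(x),G(y))-G(x)G(y)\bigr]\,\rd x\,\rd y$, with the Fr\'echet--Hoeffding envelopes as the dominating function --- is exactly the technique the paper deploys later for the asymptotic variance (see the proof of Proposition~\ref{prop:continuity:asymptotic:variance:C}, which uses the same identity and the same $M$-envelope domination, there with $\Var_G(X^2)<\infty$ in place of your $\Var_G(X)<\infty$). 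You also correctly isolate the one genuinely delicate point, namely that the reflection identity $G^\i(1-p)=2\mu-G^\i(p)$ can fail when $G^\i$ jumps (e.g.\ at $p=1/2$ for the Bernoulli concordance-inducing distribution of Blomqvist's beta, where $G^\i(1/2)=2\mu-G^\i(1/2)$ is false); the precise statement is $G^\i(1-p)=2\mu-G^\i(p{+})$ for all $p\in(0,1)$, and since $G^\i(p{+})=G^\i(p)$ off the at most countable jump set, your almost-sure version for $U\sim\U(0,1)$ follows, which is all that the axioms $\kappa_G(W)=-1$ and change of sign require. The only cosmetic gaps are implicit standard facts you could state in one line each: that a convex combination of copulas is again a copula (needed in Part~\ref{item:linearity:kappa:g}) and that Hoeffding's identity applies because $\E|XY|<\infty$ by Cauchy--Schwarz under the second-moment assumption; neither affects correctness.
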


The class of $G$-transformed rank correlations includes popular measures of concordance as special cases.

\begin{example}[Examples of $G$-transformed rank correlations]
\label{ex:transformed:rank:correlations}
\hspace{2mm}
\begin{enumerate}
\item[1)]\label{example:spearman:rho} Spearman's rho:
$\kappa_G$ reduces to Spearman's rho $\rho_{\text{S}}(U,V)=12\E[UV]-3$ \citep{spearman1904general} if $G$ is the standard uniform distribution $\Unif(0,1)$.
\item[2)]\label{example:blomqvist:beta} Blomqvist's beta:
$\beta(C)=4C(1/2,1/2)-1$ is called Blomqvist's beta \cite{blomqvist1950measure} (also known as median correlation), which is a $G$-transformed rank correlation with $G$ being a symmetric Bernoulli distribution $\Bern(1/2)$ on $\{0,1\}$.
\item[3)]\label{example:van:der:Waerden}
van der Waerden's coefficient:
When $G$ is the standard normal distribution $\N(0,1)$, then $\kappa_G$ is known as van der Waerden's coefficient \cite{sidak1999theory} (also known as normal score correlation and Gaussian rank correlation) $\zeta(U,V)=\rho(\Phi^\i(U),\Phi^\i(V))$  where $\Phi$ is the distribution function of $\N(0,1)$.
\end{enumerate}
\end{example}

\section{Estimation of $\kappa_G$ and their comparison}\label{sec:estimation:kappa:g:comparison}

In this section, we propose a novel approach for comparing $G$-transformed rank correlations to address the question which concordance-inducing distribution is most preferable to use.
In the proposed approach, transformed rank correlations are compared in terms of the asymptotic variances of their canonical estimators, and one concordance-inducing distribution $G\in \mathcal G$ is considered more preferable to another $G'\in \mathcal G$ if the worst and best asymptotic variances of an estimator $\hat \kappa_{G}$ of $\kappa_{G}$ among a set of copulas $\mathcal D \subseteq \mathcal C_2$ are smaller than those of $\kappa_{G'}$.

\subsection{Canonical estimator of $\kappa_G$}\label{subsec:canonical:estimator:kappa:G}

Based on Proposition~\ref{prop:properties:g:transformed:correlations}
Part~\ref{item:kappa:g:invariance:location:scale:transform}, we first consider
standardized concordance-inducing distributions $G$ such that $\E_G[X]=0$ and
$\Var_G(X)=1$ where $X\sim G$.  Suppose that a data-generating
  i.i.d.\ process $(U_1,V_1),(U_2,V_2),\dots \iidsim C$ on the probability space
  $(\Omega,\mathcal A, \Prob)$ is available to estimate a $G$-transformed rank
  correlation $\kappa_{G}$.  This situation corresponds to the case when
  marginal distributions of the i.i.d.\ data are known. Although it may be
  unrealistic, this assumption is imposed throughout the paper to simplify the
  analysis. We then consider the following natural estimator of $\kappa_{G}$.

\begin{definition}[Canonical estimator of $\kappa_G$]\label{def:canonical:estimator:kappa:G}
For $G \in \mathcal G$, the canonical estimator of $\kappa_G$ is given by
\begin{align*}
\hat \kappa_G=\hat \kappa_G^{[n]}(C)=\frac{1}{n}\sum_{i=1}^n G^\i(U_i)\,G^\i(V_i),\quad (U_1,V_1),\dots,(U_n,V_n)\iidsim C, \quad  n \in \IN.
\end{align*}
\end{definition}

Although this simplified setting may be rarely the case, an asymptotic variance of this canonical estimator is obtained in a tractable form.
To this end, let
\begin{align*}
\mathcal G_4=\{G \in \mathcal G: \E_G[X]=0,\quad \Var_G(X)=1,\quad \E_G[X^4]<\infty\text{ for }X \sim G\}.
\end{align*}
If the fourth moment exists, the canonical estimator $\hat \kappa_G$ satisfies
the asymptotic normality by the classical central limit theorem: as $n \rightarrow \infty$,
\begin{align*}
\sqrt{n}\left\{\hat \kappa_G - \kappa_G(C)\right\} \darrow \N(0,\sigma_G^2(C)),
\end{align*}
where the asymptotic variance of $\hat \kappa_G$ is given by
\begin{align*}
\sigma_G^2(C)=\Var(G^\i(U)G^\i(V)),\quad C \in \mathcal C_2.
\end{align*}

\begin{example}[Discrete concordance-inducing distributions]\label{ex:discrete:concordance:inducing:distribution}
For $m\in \IN$, $\bz=(z_1,\dots,z_m) \in \IR^m$ and $\bp=(p_0,p_1,$ $\dots,p_m) \in \IR^{m+1}$ such that $0=z_{0}<z_1<\cdots<z_m$, $p_0 + 2\sum_{i=1}^m p_i=1$ and $\sum_{i=1}^m p_i z_i^2=1/2$, consider a discrete distribution $G_{m,\bz,\bp}$ supported on $-z_m,\dots,-z_1,z_{0},z_1,\dots,z_m$ with corresponding probabilities $p_m,\dots,p_1,p_0,p_1,\dots,p_m\geq 0$.
Then $G_{m,\bz,\bp}$ is a concordance-inducing distribution with mean zero and variance one.
As a special case, Blomqvist's beta arises when $m=1$, $z_1=1$ and $(p_0,p_1)=(0,1/2)$.
Let $p_{+}=p_1+\cdots + p_m$, $I_{-i}=[p_{+}-\sum_{j=1}^{i} p_j,\ p_{+}-\sum_{j=1}^{i-1} p_j]$, $I_0=[p_{+},\ p_{+}+p_0]$ and $I_i=[p_{+}+p_0+\sum_{j=1}^{i-1} p_j,\ p_{+}+p_0+\sum_{j=1}^i p_j]$ for $i\in \{1,\dots,m\}$.
Then
\begin{align*}
\kappa_{G_{m,\bz,\bp}}(C)=\E[G_{m,\bz,\bp}^\i(U)G_{m,\bz,\bp}^\i(V)]=\sum_{(i,j) \in \{-m,\dots,m\}}z_i z_j V_C(I_i\times I_j),
\end{align*}
and
\begin{align*}
\sigma_{G_{m,\bz,\bp}}^2(C) &=\Var(XY)=\E[(XY)^2]-(\E[XY])^2\\
&=  \sum_{(i,j) \in \{-m,\dots,m\}}z_i^2 z_j^2 V_C(I_i\times I_j)- \left(\sum_{(i,j) \in \{-m,\dots,m\}}z_i z_j V_C(I_i\times I_j)\right)^2,
\end{align*}
where $z_{-i}=-z_i$ for $i\in \{1,\dots,m\}$ and $V_C(A)$, $A \subseteq [0,1]^2$, is a volume of $A$ measured by $C$.
\end{example}

For an underlying copula $C$, a concordance-inducing distribution $G$ can be more preferable to another one $G'$ in terms of the stability of statistical estimation if $\sigma_{G}^2(C)\leq \sigma_{G'}^2(C)$.
Since this comparison is valid only for a specific copula $C$, we introduce a set of copulas $\mathcal D$ as possible dependence structures which the analyst is interested in quantifying and comparing.
Concordance-inducing distributions are then compared by the largest and smallest values of $\sigma_G^2(C)$ over $\mathcal D$.

\begin{definition}[Best and worst asymptotic variances for $\kappa_G$]
\label{Def:best:worst:asymptotic:variances}
For $G \in \mathcal G_4$ and $\mathcal D\subseteq \mathcal C_2$, the best and worst asymptotic variances are given by
\begin{align}\label{eq:best:worst:asymptotic:variance}
\underline \sigma_G^2(\mathcal D)=\inf_{C \in \mathcal D} \sigma_G^2(C),\quad
\overline \sigma_G^2(\mathcal D)=\sup_{C \in \mathcal D} \sigma_G^2(C),
\end{align}
respectively.
If the infimum and supremum in~\eqref{eq:best:worst:asymptotic:variance} are attainable, the sets of their attaining copulas on $\mathcal D$  are denoted, respectively, by
\begin{align*}
\underline C_G(\mathcal D)=\argmin_{C \in \mathcal D} \sigma_G^2(C),\quad
\overline C_G(\mathcal D)=\argmax_{C \in \mathcal D} \sigma_G^2(C).
\end{align*}
\end{definition}

Suppose that $\mathcal H \subseteq \mathcal G_4$ represents the set of candidates among which the analyst chooses a concordance-inducing distribution.
In terms of the stability of statistical estimation, we are interested in concordance-inducing distributions that minimize $G\mapsto \underline \sigma_G^2(\mathcal D)$ and/or $G\mapsto \overline \sigma_G^2(\mathcal D)$.

\begin{definition}[Optimal best and worst asymptotic variances]
\label{Def:optimal:best:worst:asymptotic:variances}
For $\mathcal H \subseteq \mathcal G_4$ and $\mathcal D\subseteq \mathcal C_2$, the optimal best and worst asymptotic variances on $(\mathcal H,\mathcal D)$ are defined, respectively, by
\begin{align}\label{eq:optima:best:worst:asymptotic:variance}
\underline\sigma_{\ast}^2(\mathcal H,\mathcal D)=\inf_{G \in \mathcal H}\underline \sigma_G^2(\mathcal D),
\quad
\overline\sigma_{\ast}^2(\mathcal H,\mathcal D)=\inf_{G \in \mathcal H}\overline \sigma_G^2(\mathcal D).
\end{align}
If the infima in~\eqref{eq:optima:best:worst:asymptotic:variance} are attainable, the sets of their attaining concordance-inducing distributions are denoted, respectively, by
\begin{align*}
\underline G_{\ast}(\mathcal H,\mathcal D)=\argmin_{G \in \mathcal H}\underline \sigma_G^2(\mathcal D),
\quad
\overline G_{\ast}(\mathcal H,\mathcal D)=\argmin_{G \in \mathcal H}\overline \sigma_G^2(\mathcal D).
\end{align*}
Finally, the set of optimal concordance-inducing distributions on $(\mathcal H,\mathcal D)$ is given by
\begin{align*}
G_{\ast}(\mathcal H,\mathcal D)=\underline G_{\ast}(\mathcal H,\mathcal D)\cap \overline G_{\ast}(\mathcal H,\mathcal D).
\end{align*}
\end{definition}

Regarding the attainability in~\eqref{eq:best:worst:asymptotic:variance} and~\eqref{eq:optima:best:worst:asymptotic:variance}, we will see in Section~\ref{sec:optimal:concordance:inducing:distributions} that the attaining elements in $\mathcal H$ and $\mathcal D$ can be described explicitly under certain assumptions on $\mathcal H$ and $\mathcal D$.

The comparison of optimal best and worst asymptotic variances leads to the preference order among concordance-inducing distributions as follows.

\begin{definition}[Preference of concordance-inducing distributions]
\label{def:preference:optimal:concordance:inducing:distribution}
We say that $G \in \mathcal G_4$ (or $\kappa_G$) is more preferable to $G' \in \mathcal G_4$ (or $\kappa_{G'}$) on $\mathcal D \subseteq \mathcal C_2$, denoted by $G' \leq_{\mathcal D}G$ (or $\kappa_{G'}\leq_{\mathcal D} \kappa_G$), if
\begin{align*}
\underline \sigma_G^2(\mathcal D)\leq \underline \sigma_{G'}^2(\mathcal D),\quad
\overline \sigma_G^2(\mathcal D)\leq \overline \sigma_{G'}^2(\mathcal D).
\end{align*}
\end{definition}

By definition, the preference order $G' \leq_{\mathcal D}G$ is a partial order except some restricted cases of $\mathcal D$ as seen in Corollary~\ref{cor:optimal:concordance:inducing:functions:fundamental:frechet:copulas} below.
If $G \in G_{\ast}(\mathcal H,\mathcal D)$, then $G$ may be considered as the most preferable choice among $\mathcal H\subseteq \mathcal G_4$ to accurately estimate $\kappa_G$ if the analyst believes that $\mathcal D$ is the set of underlying copulas on which she wants to quantify and compare dependence.

Other than $\mathcal H = \mathcal G_4$, one may be interested, for example, in $\mathcal H=\mathcal G_4^{\text{c}}$ where $\mathcal G_4^{\text{c}}$ is the set of continuous concordance-inducing distributions in $\mathcal G_4$, and in $\mathcal H=\mathcal G_4^{\text{b}}$ where $\mathcal G_4^{\text{b}}$ is the set of concordance-inducing distributions in $\mathcal G_4$ with bounded supports.
Note that one-sided distributions $X \sim G$ such that $\esssup(X)=\infty$ and $\essinf(X)<\infty$, or $\esssup(X)<\infty$ and $\essinf(X)=-\infty$, cannot be concordance-inducing since they cannot be radially symmetric.
Therefore, $\mathcal G_4\backslash \mathcal G_4^{\text{b}}$ is a set of concordance-inducing distributions supported on $\IR$.

\subsection{Properties of the asymptotic variance}
\label{subsec:properties:asymptotic:variance}

We study properties of the asymptotic variance and its optimal best and worst counterparts.

\begin{proposition}[Basic properties of $\sigma_G^2(C)$]
\label{prop:basic:properties:asymptotic:variance}
Let $G \in \mathcal G_4$.
Then the map $C \mapsto \sigma_G(C)$ satisfies the following properties:
\begin{enumerate}
\item\label{item:covariance:formula:asymptotic:variance}
\emph{Covariance formula}:
For $(U,V)\sim C \in \mathcal C_2$ and $(X,Y)=(G^\i(U),G^\i(V))$, we have that
\begin{align}\label{eq:covariance:formula:asymptotic:variance}
\sigma_G^2(C)=\Cov(X^2,Y^2)+1 - \Cov(X,Y)^2;
\end{align}
\item\label{item:fundamental:copulas:asymptotic:variance}
\emph{Values at fundamental copulas}:
$\sigma_G^2(\Pi)=1$ and $\sigma_G^2(M)=\sigma_G^2(W)=\Var_G(X^2)$;
\item\label{item:bounds:asymptotic:variance}
\emph{Bounds}: $0\leq \sigma_G^2(C)\leq 1+\Var_G(X^2)$ for $X \sim G$.
In particular, $\sigma_G^2\left((M+W)/2\right)= 1+\Var_G(X^2)$;
\item\label{item:boundedness:asymptotic:variance}
\emph{Boundedness}:
$\sigma_G^2(C)<\infty$ for all $C \in \mathcal C_2$;
\item\label{item:reflection:invariance:asymptotic:variance}
\emph{Reflection invariance}:
$\sigma_G^2(C)=\sigma_G^2(C_{\nu_1})=\sigma_G^2(C_{\nu_2})=\sigma_G^2(C_{\nu_1\circ \nu_2})$.
\end{enumerate}
\end{proposition}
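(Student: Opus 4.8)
The plan is to reduce every part to elementary second-moment computations on the transformed pair $(X,Y)=(G^\i(U),G^\i(V))$, after first recording two facts that will be used throughout. Since $U,V\sim\U(0,1)$, both margins satisfy $X\sim G$ and $Y\sim G$, so $\E[X]=\E[Y]=0$, $\E[X^2]=\E[Y^2]=1$ and $\Var(X^2)=\Var(Y^2)=\Var_G(X^2)$. The second fact is the quantile symmetry $G^\i(1-U)\aseq -G^\i(U)$: because $G$ is radially symmetric about $0$ we have $X\deq -X$, which forces $G^\i(1-p)=-G^\i(p)$ at every continuity point of $p\mapsto G^\i(p)$; the exceptional set is countable, hence Lebesgue-null, so the identity holds almost surely under $U\sim\U(0,1)$.

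Granting these, Part~\ref{item:covariance:formula:asymptotic:variance} is a direct expansion: $\sigma_G^2(C)=\Var(XY)=\E[X^2Y^2]-\E[XY]^2$, and using $\E[X^2Y^2]=\Cov(X^2,Y^2)+\E[X^2]\E[Y^2]=\Cov(X^2,Y^2)+1$ together with $\E[XY]=\Cov(X,Y)$ yields the stated formula. For Part~\ref{item:fundamental:copulas:asymptotic:variance} I would substitute the three fundamental copulas into this formula: under $\Pi$ the variables $X,Y$ are independent so both covariances vanish, giving $\sigma_G^2(\Pi)=1$; under $M$ we have $U\aseq V$, hence $X\aseq Y$, so $\Cov(X^2,Y^2)=\Var(X^2)$ and $\Cov(X,Y)=1$; under $W$ we have $V\aseq 1-U$, hence $Y\aseq -X$ by the quantile symmetry, so $\Cov(X^2,Y^2)=\Var(X^2)$ while $\Cov(X,Y)=-1$. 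In both latter cases the formula collapses to $\Var_G(X^2)$.

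For Part~\ref{item:bounds:asymptotic:variance} the lower bound is immediate since $\sigma_G^2(C)$ is a variance; for the upper bound I would discard the nonpositive term $-\Cov(X,Y)^2$ and bound $\Cov(X^2,Y^2)\le\sqrt{\Var(X^2)\Var(Y^2)}=\Var_G(X^2)$ by Cauchy--Schwarz, giving $\sigma_G^2(C)\le 1+\Var_G(X^2)$. To see the bound is attained at $(M+W)/2$ I would realize this mixture through an independent $B\sim\Bern(1/2)$, setting $Y\aseq X$ when $B=1$ and $Y\aseq -X$ when $B=0$; then $Y^2\aseq X^2$ always, so $\Cov(X^2,Y^2)=\Var(X^2)$, while $\E[XY]=\tfrac12\E[X^2]-\tfrac12\E[X^2]=0$ gives $\Cov(X,Y)=0$. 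Part~\ref{item:boundedness:asymptotic:variance} is then immediate from the upper bound, since $\Var_G(X^2)=\E[X^4]-1<\infty$ for $G\in\mathcal G_4$. Finally, for Part~\ref{item:reflection:invariance:asymptotic:variance} I would note that the three partial reflections send $(X,Y)$ to $(-X,Y)$, $(X,-Y)$ and $(-X,-Y)$ via the quantile symmetry, leaving $X^2$ and $Y^2$ unchanged and altering $\Cov(X,Y)$ only by a sign; since only $\Cov(X,Y)^2$ enters the covariance formula, $\sigma_G^2$ is unchanged.

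The main obstacle is the almost-sure quantile symmetry $G^\i(1-U)\aseq -G^\i(U)$, which underlies both the $W$-computation in Part~\ref{item:fundamental:copulas:asymptotic:variance} and all of Part~\ref{item:reflection:invariance:asymptotic:variance}. The care here is that $G$ may have atoms---most notably the standardized symmetric Bernoulli law underlying Blomqvist's beta---so the pointwise identity $G^\i(1-p)=-G^\i(p)$ can genuinely fail on a finite set of $p$ (e.g.\ at $p=1/2$); the argument must therefore pass through the continuity points of $G^\i$ and invoke the null-set argument rather than claim the identity everywhere. Once this is settled, the rest is routine bookkeeping with first and second moments.
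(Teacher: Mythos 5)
Your proof is correct and follows essentially the same route as the paper's: expand $\Var(XY)$ into the covariance formula using $\E[X]=0$, $\E[X^2]=1$, substitute the fundamental copulas, use Cauchy--Schwarz plus the Bernoulli-mixture representation of $(M+W)/2$ for the bounds, and reduce reflection invariance to sign flips that disappear in $\Cov(X,Y)^2$. Your added care about the quantile symmetry $G^\i(1-U)\aseq -G^\i(U)$ holding only off the (countable, hence null) discontinuity set of $G^\i$ is a valid refinement of a step the paper uses without comment, and is indeed needed for atomic $G$ such as the symmetric Bernoulli.
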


\begin{proposition}[Concavity of $\sigma_G^2(C)$]\label{prop:concavity:sigma:G:C:asymptotic:variance}
For $G \in \mathcal G_4$, the map $C \mapsto \sigma_G^2(C)$ is concave with respect to convex combinations of copulas.
In particular, for $k \in [-1,1]$, the map $C \mapsto \sigma_G^2(C)$ is linear on $\mathcal C_G(k)=\{C \in \mathcal C_2: \kappa_G(C)=k\}$.
\end{proposition}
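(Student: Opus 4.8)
The plan is to read off from the covariance formula~\eqref{eq:covariance:formula:asymptotic:variance} that $\sigma_G^2(C)$ is, up to constants, the difference between an affine functional of $C$ and the square of another affine functional of $C$, and then to deduce concavity from convexity of the squaring map. Concretely, writing $(X,Y)=(G^\i(U),G^\i(V))$ for $(U,V)\sim C$, the computation in the proof of Proposition~\ref{prop:basic:properties:asymptotic:variance} already gives $\sigma_G^2(C)=\E[(XY)^2]-\E[XY]^2$. I would therefore introduce the two functionals $q(C)=\E_C[(G^\i(U)G^\i(V))^2]$ and $\kappa_G(C)=\E_C[G^\i(U)G^\i(V)]$, so that $\sigma_G^2(C)=q(C)-\kappa_G(C)^2$.

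The first key step is to show that both $q$ and $\kappa_G$ are affine under convex combinations of copulas. The underlying reason is that a convex combination $C=\sum_{i=1}^n\alpha_iC_i$ is precisely the distribution function of the mixture that draws $(U,V)$ from $C_i$ with probability $\alpha_i$; hence for any $C$-integrable function $h$ one has $\E_C[h(U,V)]=\sum_{i=1}^n\alpha_i\E_{C_i}[h(U,V)]$. Applying this with $h(u,v)=G^\i(u)G^\i(v)$ recovers the linearity of $\kappa_G$ already recorded in Proposition~\ref{prop:properties:g:transformed:correlations} Part~\ref{item:linearity:kappa:g}, and applying it with $h(u,v)=(G^\i(u)G^\i(v))^2$ yields the analogous linearity of $q$; the relevant integrals are finite because $G\in\mathcal G_4$ has finite fourth moment.

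With affinity in hand, concavity is immediate: for $C=\sum_i\alpha_iC_i$ the term $q(C)=\sum_i\alpha_iq(C_i)$ is linear, while $\kappa_G(C)^2=(\sum_i\alpha_i\kappa_G(C_i))^2\le\sum_i\alpha_i\kappa_G(C_i)^2$ by convexity of $t\mapsto t^2$ (Jensen's inequality applied to the weights $\alpha_i$). Subtracting, $\sigma_G^2(\sum_i\alpha_iC_i)\ge\sum_i\alpha_i\sigma_G^2(C_i)$, which is the asserted concavity. For the ``in particular'' claim, I would first note that $\mathcal C_G(k)$ is convex, being the level set of the affine map $\kappa_G$; on this set $\kappa_G\equiv k$ is constant, so $\sigma_G^2(C)=q(C)-k^2$ reduces to an affine functional of $C$ and is therefore linear with respect to convex combinations within $\mathcal C_G(k)$.

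I do not anticipate a genuine obstacle here: the entire argument hinges on the mixture interpretation of convex combinations of copulas, and once that linearity is established the concavity is a one-line consequence of the convexity of squaring. The only point requiring a little care is the justification that $q$ is well defined and finite and that the mixture identity $\E_C[h]=\sum_i\alpha_i\E_{C_i}[h]$ may legitimately be applied to $h(u,v)=(G^\i(u)G^\i(v))^2$, both of which follow from the fourth-moment condition defining $\mathcal G_4$.
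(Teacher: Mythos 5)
Your proof is correct and takes essentially the same route as the paper's: since $G\in\mathcal G_4$ is standardized, your decomposition $\sigma_G^2(C)=q(C)-\kappa_G(C)^2$ is the paper's covariance formula in raw-moment form, and the paper establishes the affinity of both functionals via exactly your mixture construction, namely the coupling $(\tilde X,\tilde Y)=B(X,Y)+(1-B)(X',Y')$ with $B\sim\Bern(p)$. Its key inequality $\bigl(p\Cov(X,Y)+(1-p)\Cov(X',Y')\bigr)^2\le p\Cov(X,Y)^2+(1-p)\Cov(X',Y')^2$ is precisely your Jensen step for the squaring map, and it likewise obtains linearity on $\mathcal C_G(k)$ by observing that equality holds there because the linear term is constantly $k$.
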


Concavity of $C \mapsto \sigma_G^2(C)$ can be helpful to simplify the problem of deriving $\underline \sigma_G^2(\mathcal D)$ in some cases.

\begin{corollary}[$\underline \sigma_G^2(C)$ over convex combinations of copulas]\label{cor:min:lower:asymptotic:variance:extremal:points}
For $L \in \IN$ and $C_1,\dots,C_L \in \mathcal C_2$, let 
$$\mathcal D=\operatorname{conv}(C_1,\dots,C_L) \subseteq \mathcal C_2$$
be the set of convex combinations of $C_1,\dots,C_L$.
Then
$$
\underline \sigma_G^2(\mathcal D)= \min_{l=1,\dots,L}\sigma_G^2(C_l),\quad
\underline C_G(\mathcal D)\supseteq \argmin_{l=1,\dots,L}\sigma_G^2(C_l).
$$
\end{corollary}

\subsection{Optimal location shift of $G$}\label{subsec:optimal:location:shift}

Although $\kappa_G$ is invariant under location-scale transforms of $G$ by Proposition~\ref{prop:properties:g:transformed:correlations} Part~\ref{item:kappa:g:invariance:location:scale:transform}, the asymptotic variance $\sigma_G^2(C)$ of its canonical estimator $\hat \kappa_G$ may not be the case.
To see this, let $G_0 \in \mathcal G_4$ be a standardized concordance-inducing distribution with mean zero and variance one, and let $G_{\mu,\sigma}(x)=G_0((x-\mu)/\sigma)$ be the corresponding concordance-inducing distribution of the same type as $G_0$ but with mean $\mu \in \IR$ and variance $\sigma^2 >0$.
Since
$$
\kappa_{G_{\mu,\sigma}}(C)=\rho(G_{\mu,\sigma}^\i(U),G_{\mu,\sigma}^{\i}(V))=\frac{\E\left[G_{\mu,\sigma}^\i(U)G_{\mu,\sigma}^\i(V)\right]- \mu^{2}}{\sigma^{2}},\quad (U,V)\sim C,
$$
for known $\mu$ and $\sigma$, a canonical estimator of $\kappa_{G_{\mu,\sigma}}$ can be given by
\begin{align*}
\hat \kappa_{G_{\mu,\sigma}}=\hat \kappa_{G_{\mu,\sigma}}^{[n]}(C) = \frac{1}{n}\sum_{i=1}^n \frac{G_{\mu,\sigma}^\i(U_i)G_{\mu,\sigma}^\i(V_i)}{\sigma^2} - \left(\frac{\mu}{\sigma}\right)^2,
\end{align*}
for $(U_{1},V_{1}),(U_{2},V_{2}),\dots \iidsim C$, which reduces to~\eqref{def:canonical:estimator:kappa:G} when $\mu=0$ and $\sigma^{2}=1$.
By the CLT, asymptotic normality follows for $\hat \kappa_{G_{\mu,\sigma}}$ with the asymptotic variance given by
\begin{align*}
\sigma_{G_{\mu,\sigma}}^2(C)=\Var\left(\frac{G_{\mu,\sigma}^\i(U)G_{\mu,\sigma}^\i(V)}{\sigma^2}\right).
\end{align*}
Since $G_{\mu,\sigma}^\i(U)/\sigma=G_{\mu/\sigma,1}^\i(U)$ and $G_{\mu,\sigma}^\i(V)/\sigma=G_{\mu/\sigma,1}^\i(V)$, one can assume that $\sigma=1$ without changing the asymptotic variance $\sigma_{G_{\mu,\sigma}}^2(C)$, that is, $\sigma_G^2(C)$ is invariant under scale transforms of $G$.
On the other hand, $\sigma_G^2(C)$ changes under location transforms of $G$ since shifting $G^\i$ by $\mu \in \IR$ leads to the asymptotic variance $\Var((X+\mu)(Y+\mu))=\Var(XY+\mu(X+Y))$ for $X=G^\i(U)$ and $Y=G^\i(V)$, which is in general not equal to $\Var(XY)$.

Since the canonical estimator $\hat \kappa_{G_{\mu,\sigma}}$ estimates the same quantity $\kappa_{G_0}$ regardless of the mean $\mu$ and variance $\sigma^2$ of $G$, a natural choice of $(\mu,\sigma)$ is such that $\sigma =1$ and $\mu$ minimizes the asymptotic variance $\sigma_{G_{\mu,1}}^2(C)$.
For a fixed concordance-inducing distribution $G_0 \in \mathcal G_4$ with mean zero and variance one, denote by $G_\mu(x)=G_0(x-\mu)$ the concordance-inducing distribution of the same type as $G_0$ but with mean $\mu\in \IR$.
For $X= X_0+\mu \sim G_\mu$ and $Y= Y_0+\mu \sim G_\mu$ with $X_0=G_0^\i(U)$ and $Y_0=G_0^\i(V)$, the asymptotic variance is given by
\begin{align*}
\sigma_{G_\mu}^2(C)&=\Var(XY)=\Var((X_0+\mu)(Y_0+\mu))=\Var(X_0Y_0+\mu(X_0+Y_0))\\
&= \Var(X_0Y_0)+ 2\mu \Cov(X_0Y_0,X_0+Y_0)+\mu^2 \Var(X_0+Y_0).
\end{align*}
Therefore the desired $\mu$ is given as follows.

\begin{definition}[Optimal shift of $G_0$]
For $G_0 \in \mathcal G_4$ and $C \in \mathcal C_2$, the minimizer of $\mu \mapsto \sigma_{G_\mu}^2(C)$ is called the optimal (location) shift of $G_0$ under $C$, and is given by
\begin{align*}
\mu_{\ast} = \mu_{\ast}(G_0,C)=\begin{cases}
-\frac{\Cov(X_0Y_0,\, X_0+Y_0)}{\Var(X_0+Y_0)}, & \text{ if } \Var(X_0+Y_0)>0,\\
0, & \text{ if } \Var(X_0+Y_0)=0,
\end{cases}
\end{align*}
where $(U,V)\sim C$ and $(X_0,Y_0)=(G_0^\i(U),G_0^\i(V))$.
The optimal asymptotic variance is then given by
\begin{align*}
\sigma_{G_{\mu_\ast}}^2(C)=
\begin{cases}
\Var(X_0Y_0)-\frac{\Cov(X_0Y_0,X_0+Y_0)^2}{\Var(X_0+Y_0)},& \text{ if } \Var(X_0+Y_0)>0,\\
\Var(X_0Y_0), & \text{ if } \Var(X_0+Y_0)=0.
\end{cases}
\end{align*}

\end{definition}

For $G_0 \in \mathcal G_4$, the degenerate case $\Var(X_0+Y_0)=0$ occurs if and only if $X_0+Y_0\aseq0$.
In this case, we have that
\begin{align*}
\sigma_{G_\mu}^2(C)= \Var(X_0Y_0)+ 2\mu \Cov(X_0Y_0,0)+\mu^2 \Var(0)= \Var(X_0Y_0)=\sigma_{G_0}^2(C),
\end{align*}
for every $\mu \in \IR$.

The following proposition states that $\mu_{\ast}=0$ for a certain class of copulas.

\begin{proposition}[Sufficient condition for $\mu_{\ast}=0$]\label{prop:sufficient:condition:optimal:shift:zero}
Let $C \in \mathcal C_2$ be a copula and $G_0 \in \mathcal G_4$ be a concordance-inducing distribution with mean zero and variance one.
Then $\mu_{\ast}(G_0,C)=0$ holds if $C$ is radially symmetric $C=C_{\nu_1\circ \nu_2}$, that is,
$(U,V)\deq (1-U,1-V)$ for $(U,V)\sim C$.
\end{proposition}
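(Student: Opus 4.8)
The plan is to reduce the claim to showing that a single covariance vanishes, and then to exploit the combined radial symmetry of $C$ and of $G_0$ to kill the relevant odd moments. First I would dispose of the degenerate case: if $\Var(X_0+Y_0)=0$, then $\mu_{\ast}(G_0,C)=0$ holds by the very definition of the optimal shift, so there is nothing to prove. Hence I would assume $\Var(X_0+Y_0)>0$, in which case $\mu_{\ast}=-\Cov(X_0Y_0,X_0+Y_0)/\Var(X_0+Y_0)$, and it suffices to show that $\Cov(X_0Y_0,X_0+Y_0)=0$.

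Next I would simplify this covariance. Since $G_0$ has mean zero, $\E[X_0]=\E[Y_0]=0$, so $\E[X_0+Y_0]=0$ and the covariance collapses to $\Cov(X_0Y_0,X_0+Y_0)=\E[X_0Y_0(X_0+Y_0)]=\E[X_0^2Y_0]+\E[X_0Y_0^2]$. Thus the whole proposition comes down to showing that these two third-order mixed moments vanish.

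The crux is to establish the distributional symmetry $(X_0,Y_0)\deq(-X_0,-Y_0)$, and here I would chain two facts. The radial-symmetry identity~\eqref{eq:almost:sure:equalities:radial:symmetry}, applied to $G_0$, gives the almost-sure equality $(X_0,Y_0)=(G_0^\i(U),G_0^\i(V))\aseq(-G_0^\i(1-U),-G_0^\i(1-V))$. The radial symmetry of $C$, namely $(U,V)\deq(1-U,1-V)$, gives the distributional equality $(G_0^\i(1-U),G_0^\i(1-V))\deq(G_0^\i(U),G_0^\i(V))=(X_0,Y_0)$. Combining the two yields $(X_0,Y_0)\deq(-X_0,-Y_0)$. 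With this symmetry in hand, applying the map $(x,y)\mapsto(-x,-y)$ inside the expectations shows $\E[X_0^2Y_0]=\E[(-X_0)^2(-Y_0)]=-\E[X_0^2Y_0]$, whence $\E[X_0^2Y_0]=0$, and likewise $\E[X_0Y_0^2]=0$; therefore $\Cov(X_0Y_0,X_0+Y_0)=0$ and $\mu_{\ast}=0$.

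The main obstacle I anticipate is the careful bookkeeping in the last step: one identity (coming from $G_0$) is almost sure while the other (coming from $C$) is only distributional, so I would take care to combine them so as to produce a genuine distributional equality of the pair $(X_0,Y_0)$ and its negation, which is all that is needed since the third moments depend only on the law of $(X_0,Y_0)$. Everything else is routine moment algebra.
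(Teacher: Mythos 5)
Your proof is correct, and it takes a genuinely different---and shorter---route than the paper's. The paper also reduces to the nondegenerate case and to showing $\E[X_0Y_0(X_0+Y_0)]=0$, but from there it splits the expectation over the four quadrants around $(1/2,1/2)$ via the indicator identity $1=\bone_{\{U>1/2,\,V>1/2\}}+\bone_{\{U_{\nu_1}>1/2,\,V_{\nu_1}>1/2\}}+\bone_{\{U_{\nu_2}>1/2,\,V_{\nu_2}>1/2\}}+\bone_{\{U_{\nu_1\circ\nu_2}>1/2,\,V_{\nu_1\circ\nu_2}>1/2\}}$, rewrites each quadrant term through the reflections and the almost-sure identities~\eqref{eq:almost:sure:equalities:radial:symmetry}, and then cancels the $\iota$ term against the $\nu_1\circ\nu_2$ term and the $\nu_1$ term against the $\nu_2$ term, the latter cancellation using $(U_{\nu_1},V_{\nu_1})\deq(U_{\nu_2},V_{\nu_2})$, a further consequence of radial symmetry of $C$. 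You bypass this quadrant bookkeeping entirely by first extracting the global sign-flip symmetry $(X_0,Y_0)\deq(-X_0,-Y_0)$---chaining the almost-sure identity coming from radial symmetry of $G_0$ with the law identity $(U,V)\deq(1-U,1-V)$ pushed through the measurable map $(u,v)\mapsto\bigl(-G_0^\i(u),-G_0^\i(v)\bigr)$, a legitimate combination since almost-sure equality implies equality in distribution, as you correctly flag---and then observing that $xy(x+y)$ is odd under $(x,y)\mapsto(-x,-y)$. Your route is more conceptual and more reusable: the symmetry $(X_0,Y_0)\deq(-X_0,-Y_0)$ kills \emph{every} odd-degree mixed moment, not just the cubic ones needed here, and it uses only $(U,V)\deq(1-U,1-V)$ rather than the extra relation between the $\nu_1$- and $\nu_2$-reflections; what the paper's version buys is continuity with the reflection calculus it develops and exploits elsewhere (e.g.\ Proposition~\ref{prop:basic:properties:asymptotic:variance} Part~\ref{item:reflection:invariance:asymptotic:variance}). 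One line worth adding: the split $\E[X_0Y_0(X_0+Y_0)]=\E[X_0^2Y_0]+\E[X_0Y_0^2]$ needs integrability of each term, which follows from $G_0\in\mathcal G_4$ by Cauchy--Schwarz, $\E|X_0^2Y_0|\le\E[X_0^4]^{1/2}\E[Y_0^2]^{1/2}<\infty$.
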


By Proposition~\ref{prop:sufficient:condition:optimal:shift:zero}, a location shift of $G_0$ does not change the asymptotic variance $\sigma_{G_0}^2(C)$ when $C$ is, for example, $M$, $W$, $\Pi$, a Gaussian copula, $t$ copula or their mixtures.
On the other hand, shifting $G_0$ may improve $\sigma_{G_0}^2(C)$ if $C$ is, for example, a Clayton or Gumbel copula since they are not radially symmetric in general.
The next proposition states that the optimal asymptotic variance can be obtained analytically when $(X,Y)=(G^\i(U),G^\i(V))$ is a normal variance mixture, that is,
\begin{align}\label{eq:normal:variance:mixture}
(X,Y)=(\mu_1,\mu_2)+\sqrt{W}(Z_1,Z_2),
\quad (\mu_1,\mu_2)\in \IR^2,
\quad (Z_1,Z_2)\sim \N_2(\bzero_2,\Sigma),
\end{align}
where $\Sigma$ is a $2$-dimensional square positive definite matrix and $W \geq 0$ is a non-negative random variable independent of $(Z_1,Z_2)$ and such that $\Prob(W=0)<1$.

\begin{proposition}[$\sigma_{G_{\mu_\ast}}^2(C)$ for normal variance mixture]\label{prop:asymptotic:variance:normal:variance:mixture}
For $G \in \mathcal G_4$ and $(U,V)\sim C \in \mathcal C_2$, suppose that $(X,Y)=(G^\i(U),G^\i(V))$ is a normal variance mixture specified by~\eqref{eq:normal:variance:mixture}.
Then the optimal asymptotic variance is given by
\begin{align}\label{eq:optimal:asymptotic:variance:normal:variance:mixture}
\sigma_{G_{\mu^\ast}}^2(C)= \left(2\frac{\E[W^2]}{\E[W]^2}  -1\right) \rho^2(X,Y) + \frac{\E[W^2]}{\E[W]^2}.
\end{align}
\end{proposition}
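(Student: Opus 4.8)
The plan is to reduce the optimal asymptotic variance to the single quantity $\Var(XY)$ and then evaluate it explicitly from the moment structure of the mixture. First I would record the constraints forced by $G\in\mathcal G_4$. Since $X=G^\i(U)\sim G$ and $Y=G^\i(V)\sim G$ have mean zero and unit variance, and since $W$ is independent of $(Z_1,Z_2)$ with $\E[Z_1]=\E[Z_2]=0$, taking expectations in $X=\mu_1+\sqrt W Z_1$ forces $\mu_1=\mu_2=0$, while $\Var(X)=\E[W]\,\Sigma_{11}=1$ forces the diagonal entries $\Sigma_{11}=\Sigma_{22}=1/\E[W]$; moreover $\E[W^2]<\infty$ follows from $\E[X^4]=\E[W^2]\E[Z_1^4]<\infty$. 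Hence $(X,Y)=\sqrt W(Z_1,Z_2)$.

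The key structural step is to show that the optimal shift vanishes. Because $(Z_1,Z_2)$ is a centred Gaussian vector, $(Z_1,Z_2)\deq(-Z_1,-Z_2)$, so $(X,Y)=\sqrt W(Z_1,Z_2)\deq\sqrt W(-Z_1,-Z_2)=(-X,-Y)$; that is, $(X,Y)$ is radially symmetric. Applying $G$ componentwise and invoking the almost-sure identity \eqref{eq:almost:sure:equalities:radial:symmetry} (which gives $-G^\i(U)\aseq G^\i(1-U)$), this transfers to the copula level as $(U,V)\deq(1-U,1-V)$, i.e. $C=C_{\nu_1\circ\nu_2}$. Proposition~\ref{prop:sufficient:condition:optimal:shift:zero} then yields $\mu_\ast(G,C)=0$, so that $\sigma_{G_{\mu_\ast}}^2(C)=\sigma_G^2(C)=\Var(XY)$.

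It then remains to evaluate $\Var(XY)$ with $XY=W Z_1 Z_2$. Conditioning on $W$ and using its independence from $(Z_1,Z_2)$ gives $\E[XY]=\E[W]\,\Sigma_{12}$ and $\E[(XY)^2]=\E[W^2]\,\E[Z_1^2Z_2^2]$. The one nonroutine ingredient is the fourth Gaussian moment: by Isserlis' theorem (Wick's theorem), $\E[Z_1^2Z_2^2]=\Sigma_{11}\Sigma_{22}+2\Sigma_{12}^2=1/\E[W]^2+2\Sigma_{12}^2$. Combining these, $\Var(XY)=\E[W^2]/\E[W]^2+(2\E[W^2]-\E[W]^2)\,\Sigma_{12}^2$.

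Finally I would rewrite this in terms of $\rho=\rho(X,Y)$. Since $\Var(X)=\Var(Y)=1$ and $\E[X]=\E[Y]=0$, we have $\rho(X,Y)=\Cov(X,Y)=\E[W]\,\Sigma_{12}$, hence $\Sigma_{12}^2=\rho^2/\E[W]^2$; substituting yields exactly~\eqref{eq:optimal:asymptotic:variance:normal:variance:mixture}. I expect the only genuine subtlety to be the reduction $\mu_\ast=0$, namely transferring radial symmetry from the vector $(X,Y)$ to the copula $C$ so that Proposition~\ref{prop:sufficient:condition:optimal:shift:zero} applies; the subsequent moment algebra, including the Gaussian fourth moment, is routine.
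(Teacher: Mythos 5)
Your proposal is correct in substance and, at its core, follows the same route as the paper: reduce to $\mu_{\ast}=0$ via radial symmetry, then evaluate Gaussian moments. The computational half differs only cosmetically --- the paper computes $\Cov(X^2,Y^2)$ by the law of total covariance and inserts it into the covariance formula~\eqref{eq:covariance:formula:asymptotic:variance}, whereas you compute $\Var(XY)=\E[(XY)^2]-\E[XY]^2$ directly; both rest on the same Isserlis identity $\E[Z_1^2Z_2^2]=\Sigma_{11}\Sigma_{22}+2\Sigma_{12}^2$ (equivalently $\Cov(Z_1^2,Z_2^2)=2\Sigma_{12}^2$), and your algebra, including the standardization $\mu_1=\mu_2=0$, $\Sigma_{11}=\Sigma_{22}=1/\E[W]$ and $\Sigma_{12}=\rho/\E[W]$, checks out.

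The one step whose justification is flawed is the transfer of radial symmetry to the copula. The identity~\eqref{eq:almost:sure:equalities:radial:symmetry} lets you push symmetry of $C$ \emph{down} to the law of $(X,Y)$; it does not let you pull $(X,Y)\deq(-X,-Y)$ \emph{up} to $(U,V)\deq(1-U,1-V)$, because $G^\i$ need not be injective, in which case the joint law of $(X,Y)$ does not determine $C$. In the present setting this matters precisely when $\Prob(W=0)>0$: then $G$ has an atom at $0$, the copula is unconstrained on the corresponding patch, and $C$ need not equal $C_{\nu_1\circ\nu_2}$ even though $(X,Y)$ is radially symmetric, so Proposition~\ref{prop:sufficient:condition:optimal:shift:zero} is not directly applicable. (When $\Prob(W=0)=0$, $G$ is continuous and strictly increasing on $\IR$, $U=G(X)$ a.s., and your transfer is valid.) The clean repair bypasses the copula altogether: $\mu_{\ast}$ depends on $C$ only through the law of $(X,Y)$, and since $\E[X+Y]=0$ one has $\Cov(XY,X+Y)=\E[XY(X+Y)]$, which vanishes because the integrand is odd under $(X,Y)\mapsto(-X,-Y)$ and $(X,Y)\deq(-X,-Y)$; alternatively, $\E[XY(X+Y)]=\E[W^{3/2}]\,\E[Z_1Z_2(Z_1+Z_2)]=0$ since third-order centred Gaussian moments vanish (integrability follows from $\E_G[X^4]<\infty$). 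To be fair, the paper's own proof invokes radial symmetry with the same brevity, so your argument is no less rigorous than the published one; but as written, your cited justification has the implication pointing in the wrong direction.
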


By~\eqref{eq:optimal:asymptotic:variance:normal:variance:mixture}, the function $\rho \mapsto \sigma_{G_{\mu_\ast}}^2(C)$ is convex since $\E[W^2]\geq \E[W]^2$.
For normal distributions, $W=1$ and thus $\sigma_{G_{\mu_\ast}}^2(C)=\rho^2(X,Y)+1$.
Finally, as we will see in Section~\ref{sec:simulation:study}, the variance $\Var(X^2)$, $X\sim G$, is observed to affect convexity or concavity of $\rho \mapsto \sigma_{G_{\mu_\ast}}^2(C)$.
For the case of normal variance mixtures,
we have that
\begin{align}
\label{eq:optimal:asymptotic:variance:normal:variance:mixture:variance:X:squared}
\sigma_{G_{\mu^\ast}}^2(C)= \left(\Var(X^2)-\frac{\E[W^2]}{\E[W]^2}\right) \rho^2(X,Y) + \frac{\E[W^2]}{\E[W]^2},
\end{align}
since
\begin{align*}
\Var(X^2)&=\E[W^2Z_1^4]-(\E[WZ_1^2])^2
=\E[W^2]\left(\frac{3}{\E[W]^2}\right)-\E[W]^2\left(\frac{1}{\E[W]^2}\right)
=3\frac{\E[W^2]}{\E[W]^2}-1.
\end{align*}

\section{Optimal concordance-inducing distributions}
\label{sec:optimal:concordance:inducing:distributions}

In this section we investigate optimal best and worst asymptotic variances and their attaining concordance-inducing distributions for certain choices of $\mathcal D\subseteq \mathcal C_2$.

\subsection{Asymptotic variance for fundamental and Fr\'echet copulas}\label{subsec:asymptotic:variance:fundamental:copulas}

We first consider the case when $\mathcal D \subset \mathcal C_2$ is a set of fundamental copulas $M$, $\Pi$ and $W$, or their mixtures since these copulas play important roles in the discussion of the best and worst asymptotic variances.
By radial symmetry of these copulas, the optimal shift $\mu_\ast$ is zero and thus it suffices to consider standardized concordance-inducing distributions in $\mathcal G_4$.

\begin{definition}[Fr\'echet copula]\label{def:frechet:copulas}
A bivariate Fr\'echet copula is defined by
\begin{align*}
C_{\bp}^{\operatorname{F}} =  p_M M + p_{\Pi}\Pi + p_{W} W,\quad  \bp = (p_M,p_{\Pi},p_W)\in \Delta_3,
\end{align*}
where $\Delta_3=\{(p_1,p_2,p_3)\in \IR^3: p_{1},\,p_{2},\,p_{3}\geq 0,\,p_1+p_2+p_3=1\}$ is the standard unit simplex on $\IR^3$.
The set of all Fr\'echet copulas is denoted by $\mathcal C^{\operatorname{F}}= \{C_{\bp}^{\operatorname{F}}: \bp \in \Delta_3\}$.
\end{definition}

In addition to their financial applications, Fr\'echet copulas can be used to approximate bivariate copulas; see \cite{yang2006bivariate}.
Moreover, for any $G \in \mathcal G_4$, the transformed rank correlation $\kappa_G$ can take any value in $[-1,1]$ since, by Proposition~\ref{prop:properties:g:transformed:correlations} Part~\ref{item:linearity:kappa:g}, it holds that
\begin{align}\label{eq:kappa:g:Frechet:copulas}
\kappa_G(C_{\bp}^{\operatorname{F}})=p_M \kappa_G(M) + p_{\Pi}\kappa_G(\Pi) + p_{W} \kappa_G(W)=p_M-p_W\in [-1,1].
\end{align}

The following proposition is an immediate consequence from Proposition~\ref{prop:basic:properties:asymptotic:variance} Part~\ref{item:fundamental:copulas:asymptotic:variance}.

\begin{proposition}[Optimal asymptotic variances for fundamental copulas]\label{prop:optimal:g:asymptotic:variance:fundamental:copulas}
Let $\mathcal H\subseteq \mathcal G_4$.
\begin{enumerate}
  \item\label{item:optimal:G:independence}
  $\underline\sigma_{\ast}^2(\mathcal H,\{\Pi\})=\overline\sigma_{\ast}^2(\mathcal H,\{\Pi\})=1$
   and
   $\underline G_\ast(\mathcal H,\{\Pi\})=\overline G_\ast(\mathcal H,\{\Pi\})=\mathcal H$.
 \item\label{item:optimal:G:extremal:dependence}
   Suppose $\mathcal D=\{M\}$, $\{W\}$ or $\{M,W\}$. Then
  \begin{align*}
      \underline \sigma_\ast^2(\mathcal H,\mathcal D)=\overline \sigma_\ast^2(\mathcal H,\mathcal D)=\inf_{G \in \mathcal H}\Var_G(X^2).
      \end{align*}
If this infimum is attainable, then
\begin{align*}
       \underline G_\ast(\mathcal H,\mathcal D)=\overline G_\ast(\mathcal H,\mathcal D)=\argmin_{G \in \mathcal H}\Var_G(X^2).
  \end{align*}
  \item\label{item:optimal:G:fundamental:copulas}
  Suppose that $\mathcal D=\{\Pi,M,W\}$. Then
\begin{align*}
\underline \sigma_\ast^2(\mathcal H,\{M,\Pi,W\})= 1 \wedge \inf_{G \in \mathcal H}\Var_G(X^2),\quad
\overline \sigma_\ast^2(\mathcal H,\{M,\Pi,W\})=1 \vee \inf_{G \in \mathcal H}\Var_G(X^2).
\end{align*}
If the infima above are attainable, then
\begin{align*}
\underline G_{\ast}(\mathcal H, \{M,\Pi,W\})&=
\begin{cases}
\argmin_{G \in \mathcal H}\Var_G(X^2), &  \text{ if } \min_{G \in \mathcal H}\Var_G(X^2)<1,\\
\mathcal H, & \text{ if }\min_{G \in \mathcal H}\Var_G(X^2)\geq 1,
\end{cases}\\
\overline G_{\ast}(\mathcal H, \{M,\Pi,W\})&=
\begin{cases}
\mathcal H, &  \text{ if } \min_{G \in \mathcal H}\Var_G(X^2)<1,\\
\argmin_{G \in \mathcal H}\Var_G(X^2), & \text{ if }\min_{G \in \mathcal H}\Var_G(X^2)\geq 1.
\end{cases}
\end{align*}
\end{enumerate}
\end{proposition}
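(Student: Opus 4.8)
The plan is to reduce the whole proposition to the two values that $\sigma_G^2$ takes on the fundamental copulas. By Proposition~\ref{prop:basic:properties:asymptotic:variance} Part~\ref{item:fundamental:copulas:asymptotic:variance}, every $G\in\mathcal G_4$ satisfies $\sigma_G^2(\Pi)=1$ and $\sigma_G^2(M)=\sigma_G^2(W)=\Var_G(X^2)$ for $X\sim G$, so on $\{M,\Pi,W\}$ the map $C\mapsto\sigma_G^2(C)$ attains only the values $1$ and $a_G:=\Var_G(X^2)$. Abbreviating $v:=\inf_{G\in\mathcal H}\Var_G(X^2)$, I would in each case first write $\underline\sigma_G^2(\mathcal D)$ and $\overline\sigma_G^2(\mathcal D)$ as elementary expressions in $1$ and $a_G$, and then pass to $\inf_{G\in\mathcal H}$ and $\arginf_{G\in\mathcal H}$ as prescribed by Definition~\ref{Def:optimal:best:worst:asymptotic:variances}.

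Parts~\ref{item:optimal:G:independence} and~\ref{item:optimal:G:extremal:dependence} are then immediate. For $\mathcal D=\{\Pi\}$ the value $\sigma_G^2(\Pi)=1$ does not depend on $G$, so best and worst asymptotic variances both equal $1$ and are attained by every $G\in\mathcal H$, giving the optimal value $1$ and attaining set $\mathcal H$. For $\mathcal D\in\{\{M\},\{W\},\{M,W\}\}$ the map $\sigma_G^2$ is constantly $a_G$ on $\mathcal D$, so $\underline\sigma_G^2(\mathcal D)=\overline\sigma_G^2(\mathcal D)=a_G$; taking $\inf$ and $\arginf$ over $\mathcal H$ produces $v$ and $\arginf_{G\in\mathcal H}\Var_G(X^2)$ for both the best and worst quantities.

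For Part~\ref{item:optimal:G:fundamental:copulas} the two relevant reductions are $\underline\sigma_G^2(\{M,\Pi,W\})=1\wedge a_G$ and $\overline\sigma_G^2(\{M,\Pi,W\})=1\vee a_G$. The optimal values follow from the elementary fact that $t\mapsto 1\wedge t$ and $t\mapsto 1\vee t$ are nondecreasing and continuous, hence commute with $\inf_{G\in\mathcal H}$: this yields $\underline\sigma_\ast^2=1\wedge v$ and $\overline\sigma_\ast^2=1\vee v$. For the attaining sets I would split on the position of $v$ relative to the threshold $1$. If $v\ge1$, then $a_G\ge1$ for all $G\in\mathcal H$, so $1\wedge a_G\equiv1$ is minimized by all of $\mathcal H$ (hence $\underline G_\ast=\mathcal H$), whereas $1\vee a_G=a_G$ is minimized exactly on $\arginf_{G}\Var_G(X^2)$ (hence $\overline G_\ast=\arginf_G\Var_G(X^2)$). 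If $v<1$, then $1\wedge a_G=v$ forces $a_G=v<1$, so $\underline G_\ast=\arginf_G\Var_G(X^2)$; and since $1\vee a_G\ge1$ with equality iff $a_G\le1$, the worst-case optimum $1$ is attained precisely on the sublevel set $\{G\in\mathcal H:\Var_G(X^2)\le1\}$.

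The evaluations at the fundamental copulas and the commutation of $\inf$ with the two monotone maps are routine; the step deserving genuine care is the identification of $\overline G_\ast$ when $v<1$, where the honest argument-infimum of $G\mapsto1\vee a_G$ is the sublevel set $\{G\in\mathcal H:\Var_G(X^2)\le1\}$ rather than all of $\mathcal H$. I would point out, however, that this distinction does not affect the derived object $G_\ast(\mathcal H,\mathcal D)=\underline G_\ast\cap\overline G_\ast$: in the regime $v<1$ the factor $\underline G_\ast=\arginf_G\Var_G(X^2)$ already consists of distributions with $a_G=v<1$ and so lies inside that sublevel set, whence intersecting with either $\mathcal H$ or $\{G:\Var_G(X^2)\le1\}$ gives the same set.
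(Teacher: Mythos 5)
Your proof is correct and follows essentially the route the paper intends: the paper gives no separate argument, presenting the proposition as an immediate consequence of Proposition~\ref{prop:basic:properties:asymptotic:variance} Part~\ref{item:fundamental:copulas:asymptotic:variance}, and your reduction to the two values $1$ and $a_G=\Var_G(X^2)$ on $\{M,\Pi,W\}$, followed by commuting $\inf_{G\in\mathcal H}$ with $t\mapsto 1\wedge t$ and $t\mapsto 1\vee t$, is exactly that computation carried out in full.

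The caveat you raise about $\overline G_{\ast}$ in Part~\ref{item:optimal:G:fundamental:copulas} is genuine, and you are right where the stated proposition is not. By Definition~\ref{Def:optimal:best:worst:asymptotic:variances}, when $v=\inf_{G\in\mathcal H}\Var_G(X^2)<1$ the set $\overline G_{\ast}(\mathcal H,\{M,\Pi,W\})$ is $\{G\in\mathcal H:\,1\vee\Var_G(X^2)=1\}=\{G\in\mathcal H:\Var_G(X^2)\le 1\}$, which equals $\mathcal H$ only if every member of $\mathcal H$ lies in that sublevel set. A concrete counterexample to the statement as printed: take $\mathcal H$ consisting of the standardized $\Bern(1/2)$ distribution and the standard normal distribution, so $v=0<1$; the worst asymptotic variances are $1\vee 0=1$ and $1\vee 2=2$ respectively, so only the Bernoulli distribution attains the optimum and $\overline G_{\ast}\neq\mathcal H$. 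Your closing observation also explains correctly why the slip is harmless downstream: in the regime $v<1$ one has $\underline G_{\ast}=\arginf_{G\in\mathcal H}\Var_G(X^2)\subseteq\{G\in\mathcal H:\Var_G(X^2)\le 1\}$, so $G_{\ast}(\mathcal H,\{M,\Pi,W\})=\underline G_{\ast}\cap\overline G_{\ast}$ is the same whether one intersects with $\mathcal H$ or with the sublevel set, which is all that is used later (e.g.\ in Corollary~\ref{cor:optimal:concordance:inducing:functions:fundamental:frechet:copulas}).
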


The next proposition provides the best and worst asymptotic variances and their attainers when $\mathcal D=\mathcal C^{\operatorname{F}}$.

\begin{proposition}[Best and worst asymptotic variances for Fr\'echet copulas]\label{prop:worst:best:asymptotic:variances:Frechet:copulas}
For a concordance-inducing distribution $G \in \mathcal G_4$, the best and worst asymptotic variances on $\mathcal C^{\operatorname{F}}$ are given by
\begin{align*}
\underline \sigma_{G}^2(\mathcal C^{\operatorname{F}})=1 \wedge \Var_G(X^2),
\quad
\overline \sigma_{G}^2(\mathcal C^{\operatorname{F}})=1+\Var_G(X^2)
\end{align*}
with the sets of attaining copulas given by
\begin{align*}
\underline C_G(\mathcal \mathcal C^{\operatorname{F}})
&=
\begin{cases}
\{M, W\}, & \text{ if } 0\leq \Var_G(X^2)<1,\\
\{M, \Pi,W\},& \text{ if }\Var_G(X^2)=1,\\
\{\Pi\},& \text{ if }\Var_G(X^2)>1,
\end{cases}\\
\overline C_G(\mathcal C^{\operatorname{F}})
&=
\begin{cases}
\left\{ \frac{M+W}{2} \right\}, & \text{ if } \Var_G(X^2)>0,\\
\left\{p \frac{M+W}{2} + (1-p)\Pi: p \in [0,1] \right\}, & \text{ if } \Var_G(X^2)=0.
\end{cases}
\end{align*}
\end{proposition}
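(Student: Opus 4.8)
The plan is to reduce everything to an explicit quadratic in the Fr\'echet weights and then optimize that quadratic over the simplex $\Delta_3$. First I would derive a closed form for $\sigma_G^2(C_{\bp}^{\operatorname{F}})$. By the mixture representation used in the proof of Proposition~\ref{prop:concavity:sigma:G:C:asymptotic:variance}, the map $C \mapsto \Cov(X^2,Y^2)$ is linear under convex combinations of copulas (the normalization $\E_G[X^2]=1$ makes the cross terms vanish), so from the values $\Cov(X^2,Y^2)=\Var_G(X^2)$ at $M$ and $W$ and $\Cov(X^2,Y^2)=0$ at $\Pi$ (Proposition~\ref{prop:basic:properties:asymptotic:variance} Part~\ref{item:fundamental:copulas:asymptotic:variance}) one obtains $\Cov(X^2,Y^2)=(p_M+p_W)\Var_G(X^2)$. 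Combining this with $\Cov(X,Y)=\kappa_G(C_{\bp}^{\operatorname{F}})=p_M-p_W$ from~\eqref{eq:kappa:g:Frechet:copulas} and the covariance formula~\eqref{eq:covariance:formula:asymptotic:variance} gives
\begin{align*}
\sigma_G^2(C_{\bp}^{\operatorname{F}})=(p_M+p_W)\Var_G(X^2)+1-(p_M-p_W)^2.
\end{align*}

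Writing $a=\Var_G(X^2)\ge 0$, $s=p_M+p_W$ and $d=p_M-p_W$, the constraints $p_M,p_\Pi,p_W\ge0$ translate into the region $\{(s,d): 0\le s\le 1,\ |d|\le s\}$, and the objective becomes $f(s,d)=as+1-d^2$. For the worst variance I would maximize $f$: since $a\ge0$ it is nondecreasing in $s$ and maximized in $d$ at $d=0$, so the value $1+\Var_G(X^2)$ is attained at $s=1$, $d=0$, i.e.\ at $\tfrac{M+W}{2}$; in the degenerate case $a=0$ the maximum $1$ is attained along the whole segment $d=0$, $s\in[0,1]$, which is the family $\{p\tfrac{M+W}{2}+(1-p)\Pi:p\in[0,1]\}$. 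For the best variance I would first minimize over $d$ for fixed $s$, which forces $|d|=s$ and reduces the problem to $g(s)=1+as-s^2$ on $[0,1]$; since $g''=-2<0$, $g$ is concave and its minimum is attained at an endpoint, with $g(0)=1$ and $g(1)=\Var_G(X^2)$, so $\underline \sigma_G^2(\mathcal C^{\operatorname{F}})=1\wedge\Var_G(X^2)$. The same minimum value also follows directly from Corollary~\ref{cor:min:lower:asymptotic:variance:extremal:points}, since $\mathcal C^{\operatorname{F}}=\overline{\text{conv}}(M,\Pi,W)$.

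Finally I would read off the attaining copulas from the optimizer locations: $s=1,|d|=1$ corresponds to $M$ or $W$, while $s=0$ forces $d=0$ and corresponds to $\Pi$. Hence the argmin for the best variance is $\{M,W\}$ when $\Var_G(X^2)<1$, is $\{\Pi\}$ when $\Var_G(X^2)>1$, and is $\{M,\Pi,W\}$ in the boundary case $\Var_G(X^2)=1$, where the two endpoints of $g$ tie (and a short check shows interior points of $\Delta_3$ give strictly larger values, so no extra minimizers appear).

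The two one-variable optimizations are routine; the only places that need care are the degenerate boundary cases, namely $\Var_G(X^2)=0$ for the worst variance (where the maximizer is a whole family rather than a single copula) and $\Var_G(X^2)=1$ for the best variance (where all three vertices tie). I expect the main obstacle to be justifying the linearity of $C\mapsto\Cov(X^2,Y^2)$ cleanly for the three-component mixture and ensuring the $(s,d)$ parametrization matches the simplex constraints exactly, so that the reported argmin and argsup sets contain neither spurious nor missing optimizers.
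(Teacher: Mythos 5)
Your proposal is correct and follows essentially the same route as the paper's proof: both derive the closed form $\sigma_G^2(C_{\bp}^{\operatorname{F}})=(p_M+p_W)\Var_G(X^2)+1-(p_M-p_W)^2$ and optimize the resulting quadratic over the simplex, your $(s,d)$ parametrization being the paper's $(p,r)$ parametrization in disguise (with $d=p-2r$, so the paper's $f(p-r,r)=-4\left(r-\tfrac{p}{2}\right)^2+pv+1$ is exactly your $-d^2+sv+1$). Your explicit justification of the linearity of $C\mapsto\Cov(X^2,Y^2)$ via the Bernoulli mixture representation, and your interpolation check ruling out spurious minimizers, are fine and if anything slightly more detailed than the paper's treatment of those steps.
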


Note that the result in Proposition~\ref{prop:worst:best:asymptotic:variances:Frechet:copulas} is consistent with Corollary~\ref{cor:min:lower:asymptotic:variance:extremal:points}.
In the proof of Proposition~\ref{prop:worst:best:asymptotic:variances:Frechet:copulas}, although $(p_M,p_W)=(1/2,1/2)$ is the unique point attaining the maximum $v+1$ of $f$ when $v>0$, $f$ takes the value $v$ at the points $(p_M,p_W)=(1,0)$ and $(0,1)$, and is greater than $v$ on $\{(p_M,p_W) \in [0,1]^2: p_M+p_W=1\}$.
Therefore, if $v=\Var_G(X^2)$ is sufficiently large, the asymptotic variance $\sigma_G^2(C)$
takes large values in $[\Var_G(X^2),\Var_G(X^2)+1]$ if $C=p M + (1-p)W$ for $p \in [0,1]$.

\begin{remark}[Restrictions of $\mathcal C^{\operatorname{F}}$]
For a concordance-inducing distribution $G \in \mathcal G_4$, consider the set of Fr\'echet copulas such that its transformed rank correlation $\kappa_G$ takes values in $[\underline k,\overline k]$ for $-1 \leq \underline k \leq \overline k \leq 1$, that is,
\begin{align*}
\mathcal C_{\underline k,\overline k}^{\operatorname{F}}(G)=\{C \in \mathcal C^{\operatorname{F}}: \underline k \leq \kappa_G(C) \leq \overline k\}.
\end{align*}
By~\eqref{eq:kappa:g:Frechet:copulas}, the restriction $\underline k \leq \kappa_G(C) \leq \overline k$ reduces to $\underline k \leq p_M-p_W \leq \overline k$ and thus $\mathcal C_{\underline k,\overline k}^{\operatorname{F}}(G)$ does not depend on the choice of $G$.
Consequently, the maximum and minimum of the asymptotic variance $\sigma_G^2(C)$ on $\mathcal C_{\underline k,\overline k}^{\operatorname{F}}(G)$ can be found by calculating $\max f(p_M,p_W)$ and $\min f(p_M,p_W)$ subject to the constraints
\begin{align*}
\{(p_{M},p_{W}) \in \IR^{2}:0\leq p_M,\,p_W,\, p_M+p_W\leq 1\,\text{ and } \underline k \leq p_M-p_W \leq \overline k\}.
\end{align*}
This maximum and minimum always exist since $(p_M,p_W)\mapsto f(p_M,p_W)$ is bounded, concave and the feasible set is compact in $\IR^2$.
\end{remark}

Proposition~\ref{prop:worst:best:asymptotic:variances:Frechet:copulas} immediately leads to the optimal best and worst asymptotic variances on $\mathcal D=\mathcal C^{\operatorname{F}}$ as stated in the following corollary.

\begin{corollary}[Optimal best and worst asymptotic variances for Fr\'echet copulas]\label{cor:optimal:asymptotic:variances:Frechet:copulas}
For $\mathcal H \subseteq \mathcal G_4$, the optimal best and worst asymptotic variances are given by
\begin{align*}
\underline \sigma_{\ast}^2(\mathcal H, \mathcal C^{\operatorname{F}})=1 \wedge \inf_{G \in \mathcal H}\Var_G(X^2),
\quad
\overline \sigma_{\ast}^2(\mathcal H, \mathcal C^{\operatorname{F}})=1+\inf_{G \in \mathcal H}\Var_G(X^2).
\end{align*}
If the infima above are attainable, then the sets of attaining concordance-inducing distributions are given, respectively, by
\begin{align*}
\underline G_{\ast}(\mathcal H, \mathcal C^{\operatorname{F}})&=
\begin{cases}
\argmin_{G \in \mathcal H}\Var_G(X^2), &  \text{ if } \min_{G \in \mathcal H}\Var_G(X^2)<1,\\
\mathcal H, & \text{ if }\min_{G \in \mathcal H}\Var_G(X^2)\geq 1,
\end{cases}\\
\overline G_{\ast}(\mathcal H, \mathcal C^{\operatorname{F}})&=\argmin_{G \in \mathcal H}\Var_G(X^2).
\end{align*}
\end{corollary}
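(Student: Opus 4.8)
The plan is to read this off directly from Proposition~\ref{prop:worst:best:asymptotic:variances:Frechet:copulas} together with Definition~\ref{Def:optimal:best:worst:asymptotic:variances}, so that the genuine analytic content is already settled and all that remains is bookkeeping with $\inf$ and $\arginf$. First I would substitute the per-$G$ formulas $\underline \sigma_G^2(\mathcal C^{\operatorname{F}})=1\wedge \Var_G(X^2)$ and $\overline \sigma_G^2(\mathcal C^{\operatorname{F}})=1+\Var_G(X^2)$ from Proposition~\ref{prop:worst:best:asymptotic:variances:Frechet:copulas} into the definitions $\underline\sigma_{\ast}^2(\mathcal H,\mathcal C^{\operatorname{F}})=\inf_{G\in\mathcal H}\underline\sigma_G^2(\mathcal C^{\operatorname{F}})$ and $\overline\sigma_{\ast}^2(\mathcal H,\mathcal C^{\operatorname{F}})=\inf_{G\in\mathcal H}\overline\sigma_G^2(\mathcal C^{\operatorname{F}})$, abbreviating $v_\ast := \inf_{G\in\mathcal H}\Var_G(X^2)$.

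For the worst asymptotic variance, $\overline\sigma_\ast^2(\mathcal H,\mathcal C^{\operatorname{F}})=\inf_{G\in\mathcal H}\bigl(1+\Var_G(X^2)\bigr)=1+v_\ast$, since adding a constant commutes with the infimum. For the best asymptotic variance I would use that $t\mapsto 1\wedge t$ is nondecreasing and continuous on $\IR$, which lets it be pulled through the infimum: the inequality $\inf_{G}\bigl(1\wedge\Var_G(X^2)\bigr)\ge 1\wedge v_\ast$ follows from monotonicity applied termwise, while the reverse inequality follows by choosing a sequence $G_n\in\mathcal H$ with $\Var_{G_n}(X^2)\to v_\ast$ and invoking continuity. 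This yields $\underline\sigma_\ast^2(\mathcal H,\mathcal C^{\operatorname{F}})=1\wedge v_\ast$, as claimed.

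The attaining sets are handled similarly. Since $G\mapsto 1+\Var_G(X^2)$ is a constant shift of $G\mapsto \Var_G(X^2)$, the two maps have identical minimizer sets, giving $\overline G_\ast(\mathcal H,\mathcal C^{\operatorname{F}})=\arginf_{G\in\mathcal H}\Var_G(X^2)$ in all cases, with no case distinction needed. For $\underline G_\ast(\mathcal H,\mathcal C^{\operatorname{F}})=\arginf_{G\in\mathcal H}\bigl(1\wedge\Var_G(X^2)\bigr)$ I would split on $v_\ast$ relative to $1$. If $v_\ast<1$, then $\underline\sigma_\ast^2=v_\ast<1$, and a given $G$ attains it iff $1\wedge\Var_G(X^2)=v_\ast$; because $\Var_G(X^2)\ge 1$ would force $1\wedge\Var_G(X^2)=1>v_\ast$, this requires $\Var_G(X^2)<1$ and hence $\Var_G(X^2)=v_\ast$, so the minimizer set coincides with $\arginf_{G}\Var_G(X^2)$. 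If instead $v_\ast\ge 1$, then $\underline\sigma_\ast^2=1$ and $1\wedge\Var_G(X^2)=1$ for \emph{every} $G\in\mathcal H$ (as $\Var_G(X^2)\ge v_\ast\ge 1$), so every $G$ is a minimizer and $\underline G_\ast(\mathcal H,\mathcal C^{\operatorname{F}})=\mathcal H$. These two branches are exactly the stated ones.

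There is no serious analytic obstacle here; the only points needing care are the interchange $\inf_{G}\bigl(1\wedge\Var_G(X^2)\bigr)=1\wedge v_\ast$, which rests on the monotonicity and continuity of $t\mapsto 1\wedge t$, and the verification that the $\arginf$ identities hold without presupposing that the infima are attained, so that the formulas remain valid even when the relevant $\arginf$ set is empty. Both are dispatched by the termwise comparisons and the elementary case analysis above.
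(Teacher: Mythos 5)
Your proposal is correct and follows exactly the route the paper intends: the paper presents this corollary as an immediate consequence of Proposition~\ref{prop:worst:best:asymptotic:variances:Frechet:copulas} with no separate proof, and your argument is precisely that substitution of $\underline\sigma_G^2(\mathcal C^{\operatorname{F}})=1\wedge\Var_G(X^2)$ and $\overline\sigma_G^2(\mathcal C^{\operatorname{F}})=1+\Var_G(X^2)$ into Definition~\ref{Def:optimal:best:worst:asymptotic:variances}, followed by the $\inf$/$\arginf$ bookkeeping. Your careful handling of the interchange $\inf_G\bigl(1\wedge\Var_G(X^2)\bigr)=1\wedge\inf_G\Var_G(X^2)$ and of the possibly empty $\arginf$ sets (consistent with the paper's convention) correctly fills in the details the paper leaves implicit.
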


Compared with the optimal best and worst asymptotic variances from Proposition~\ref{prop:optimal:g:asymptotic:variance:fundamental:copulas} Part~\ref{item:optimal:G:fundamental:copulas}, the lower bound $\underline \sigma_{\ast}^2(\mathcal H,\mathcal D)$ obtained in Proposition~\ref{cor:optimal:asymptotic:variances:Frechet:copulas} remains unchanged whereas the upper bound $\overline \sigma_{\ast}^2(\mathcal H,\mathcal D)$ increases since the attaining copulas $p\left(M+W)\right)/2+(1-p)\Pi$, $p \in [0,1]$, are not included in the set $\mathcal D$ in Proposition~\ref{prop:optimal:g:asymptotic:variance:fundamental:copulas}.
Nevertheless, the best and worst asymptotic variances are the functions of $\Var_G(X^2)$ when $\mathcal D$ is a set of fundamental or Fr\'echet copulas, and thus we have the following result.

\begin{corollary}[Optimal concordance-inducing distributions on fundamental or Fr\'echet copulas]
\label{cor:optimal:concordance:inducing:functions:fundamental:frechet:copulas}
Suppose that $\mathcal D$ is a set of fundamental or Fr\'echet copulas, that is, $\mathcal D=\{\Pi\}$, $\{M\}$, $\{W\}$, $\{M,W\}$, $\{M,\Pi,W\}$ or $\mathcal C^{\text{F}}$. Then
\begin{enumerate}
\item\label{item:total:order:funndamental:frechet:copuas} $\leq_{\mathcal D}$ is a total order;
\item\label{item:preference:order:funndamental:frechet:copuas} $G'\leq_{\mathcal D} G$ if $\Var_{G}(X^2)\leq \Var_{G'}(X^2)$;
\item\label{item:optimal:concordance:inducing:distributions:funndamental:frechet:copuas} $G_{\ast}(\mathcal H, \mathcal D)=\argmin_{G \in \mathcal H}\Var_G(X^2)$ provided that $\inf_{G \in \mathcal H}\Var_G(X^2)$ is attainable.
\end{enumerate}
\end{corollary}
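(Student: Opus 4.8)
The plan is to reduce all three claims to a single structural fact: for each of the listed domains $\mathcal D$, the earlier results already express both the best and the worst asymptotic variances of a fixed $G$ as a non-decreasing function of the single scalar $v:=\Var_G(X^2)$. Concretely, Proposition~\ref{prop:basic:properties:asymptotic:variance} Part~\ref{item:fundamental:copulas:asymptotic:variance} gives $\sigma_G^2(\Pi)=1$ and $\sigma_G^2(M)=\sigma_G^2(W)=v$, while Proposition~\ref{prop:worst:best:asymptotic:variances:Frechet:copulas} gives $\underline \sigma_G^2(\mathcal C^{\operatorname{F}})=1\wedge v$ and $\overline \sigma_G^2(\mathcal C^{\operatorname{F}})=1+v$. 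Collecting these, I would record that $\underline \sigma_G^2(\mathcal D)=\phi_{\mathcal D}(v)$ and $\overline \sigma_G^2(\mathcal D)=\psi_{\mathcal D}(v)$, where each of $\phi_{\mathcal D},\psi_{\mathcal D}$ is one of the maps $v\mapsto 1$, $v\mapsto v$, $v\mapsto 1\wedge v$, $v\mapsto 1\vee v$, $v\mapsto 1+v$; in particular all of them are continuous and non-decreasing on $[0,\infty)$.

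Parts~\ref{item:total:order:funndamental:frechet:copuas} and~\ref{item:preference:order:funndamental:frechet:copuas} then fall out immediately. For Part~\ref{item:preference:order:funndamental:frechet:copuas}, if $\Var_G(X^2)\le \Var_{G'}(X^2)$ then monotonicity of $\phi_{\mathcal D}$ and $\psi_{\mathcal D}$ yields $\underline \sigma_G^2(\mathcal D)\le \underline \sigma_{G'}^2(\mathcal D)$ and $\overline \sigma_G^2(\mathcal D)\le \overline \sigma_{G'}^2(\mathcal D)$, which is exactly $G'\le_{\mathcal D}G$. For Part~\ref{item:total:order:funndamental:frechet:copuas}, given any $G,G'\in\mathcal G_4$ the real numbers $\Var_G(X^2)$ and $\Var_{G'}(X^2)$ are comparable, so Part~\ref{item:preference:order:funndamental:frechet:copuas} shows that at least one of $G'\le_{\mathcal D}G$ and $G\le_{\mathcal D}G'$ holds; together with the reflexivity and transitivity already noted for the partial order $\le_{\mathcal D}$, this makes $\le_{\mathcal D}$ a total order.

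For Part~\ref{item:optimal:concordance:inducing:distributions:funndamental:frechet:copuas} I would prove both inclusions of $G_\ast(\mathcal H,\mathcal D)=\underline G_\ast(\mathcal H,\mathcal D)\cap \overline G_\ast(\mathcal H,\mathcal D)=\arginf_{G\in\mathcal H}\Var_G(X^2)$ from the monotone representation. Writing $v_\ast=\inf_{G\in\mathcal H}\Var_G(X^2)$, the inclusion $\arginf_{G\in\mathcal H}\Var_G(X^2)\subseteq G_\ast(\mathcal H,\mathcal D)$ is automatic: since $\phi_{\mathcal D}$ and $\psi_{\mathcal D}$ are non-decreasing, any $G$ attaining $v_G=v_\ast$ simultaneously minimizes $\phi_{\mathcal D}(v_G)$ and $\psi_{\mathcal D}(v_G)$ over $\mathcal H$, so it lies in both $\underline G_\ast$ and $\overline G_\ast$. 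The reverse inclusion is where the work lies: for every $\mathcal D$ other than $\{\Pi\}$ at least one of $\phi_{\mathcal D},\psi_{\mathcal D}$ is strictly increasing at $v_\ast$, so minimizing it forces $v_G=v_\ast$. For $\{M\},\{W\},\{M,W\}$ and $\mathcal C^{\operatorname{F}}$ the worst-variance map $\psi_{\mathcal D}$ ($=v$ or $1+v$) is strictly increasing, giving $\overline G_\ast=\arginf_G \Var_G(X^2)$ directly; for $\{M,\Pi,W\}$ one checks, splitting on $v_\ast<1$ versus $v_\ast\ge1$, that the intersection of $\{\,1\wedge v_G=1\wedge v_\ast\,\}$ with $\{\,1\vee v_G=1\vee v_\ast\,\}$ still collapses to $\{\,v_G=v_\ast\,\}$.

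The main obstacle is precisely this last case analysis, since $1\wedge v$ and $1\vee v$ are only weakly monotone and flatten out on half-lines, so neither factor $\underline G_\ast$ nor $\overline G_\ast$ need equal $\arginf_G\Var_G(X^2)$ on its own; one must use that their intersection does. I would also flag the genuinely degenerate domain $\mathcal D=\{\Pi\}$, where $\sigma_G^2(\Pi)=1$ does not depend on $G$ and hence $\phi_{\{\Pi\}},\psi_{\{\Pi\}}$ are constant, so $G_\ast(\mathcal H,\{\Pi\})=\mathcal H$; here the stated identity is to be read as the inclusion $\arginf_{G\in\mathcal H}\Var_G(X^2)\subseteq G_\ast(\mathcal H,\{\Pi\})$, with equality recovered only when every member of $\mathcal H$ shares the same value of $\Var_G(X^2)$.
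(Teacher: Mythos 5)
Your proof is correct and follows essentially the same route as the paper, which states the corollary without a separate proof as an immediate consequence of Proposition~\ref{prop:optimal:g:asymptotic:variance:fundamental:copulas}, Proposition~\ref{prop:worst:best:asymptotic:variances:Frechet:copulas} and Corollary~\ref{cor:optimal:asymptotic:variances:Frechet:copulas}, precisely because in each listed case $\underline\sigma_G^2(\mathcal D)$ and $\overline\sigma_G^2(\mathcal D)$ are non-decreasing functions of $v=\Var_G(X^2)$; your case analysis for $\{M,\Pi,W\}$ and $\mathcal C^{\operatorname{F}}$ supplies exactly the details the paper leaves implicit, and correctly handles the only delicate point, namely that $1\wedge v$ and $1\vee v$ are each only weakly monotone while their joint level sets still pin down $v_G=v_\ast$. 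Your caveat about $\mathcal D=\{\Pi\}$ is moreover a genuine (if minor) correction to the statement as printed: by Proposition~\ref{prop:optimal:g:asymptotic:variance:fundamental:copulas} Part~\ref{item:optimal:G:independence} one has $G_\ast(\mathcal H,\{\Pi\})=\mathcal H$, so Part~\ref{item:optimal:concordance:inducing:distributions:funndamental:frechet:copuas} holds with equality only when every $G\in\mathcal H$ attains $\inf_{G'\in\mathcal H}\Var_{G'}(X^2)$, and your reading of the identity as the inclusion $\arginf_{G\in\mathcal H}\Var_G(X^2)\subseteq G_\ast(\mathcal H,\{\Pi\})$ is the right fix, while Parts~\ref{item:total:order:funndamental:frechet:copuas} and~\ref{item:preference:order:funndamental:frechet:copuas} remain trivially true there since $\sigma_G^2(\Pi)=1$ does not depend on $G$.
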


Corollary~\ref{cor:optimal:concordance:inducing:functions:fundamental:frechet:copulas} states that concordance-inducing distributions having a smaller variance of $X^2$ for $X\sim G$ are more preferable in terms of best and worst asymptotic variances when $\mathcal D$ is a set of fundamental or Fr\'echet copulas.
As a consequence, heavy-tailed concordance-inducing distributions, such as a Student $t$ distribution with degrees of freedom $4 < \nu <\infty$, are not recommendable choices at least in terms of accuracy of statistical estimation.
In particular, popular measures of concordance introduced in Example~\ref{ex:transformed:rank:correlations} are ordered as follows.

\begin{corollary}[Preference orders for $\rho_{\text{S}}$, $\beta$ and $\zeta$]
\label{cor:preference;orders:popular:mocs}
Suppose that $\mathcal D$ is a set of fundamental or Fr\'echet copulas.
Then $\zeta \leq_{\mathcal D} \rho_{\text{S}}\leq_{\mathcal D} \beta$.
\end{corollary}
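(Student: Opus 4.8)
The plan is to reduce the entire statement to Corollary~\ref{cor:optimal:concordance:inducing:functions:fundamental:frechet:copulas} Part~\ref{item:preference:order:funndamental:frechet:copuas}, which asserts that on a set of fundamental or Fr\'echet copulas the preference order is governed solely by the scalar $\Var_G(X^2)$, with a smaller value meaning a more preferable $G$. Thus the proof of the chain $\zeta \leq_{\mathcal D} \rho_{\text{S}} \leq_{\mathcal D} \beta$ amounts to computing $\Var_G(X^2)$ for the three standardized concordance-inducing distributions from Example~\ref{ex:transformed:rank:correlations} and checking that they are increasing in the order $\beta,\ \rho_{\text{S}},\ \zeta$.

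First I would note that, since the canonical estimator of Definition~\ref{def:canonical:estimator:kappa:G} works with the representative of $G$ in $\mathcal G_4$ normalized so that $\E_G[X]=0$ and $\Var_G(X)=1$, we have $\E_G[X^2]=1$ and hence
\begin{align*}
\Var_G(X^2)=\E_G[X^4]-1.
\end{align*}
The whole task therefore collapses to evaluating the fourth moment of each standardized $X\sim G$. For $\zeta$ we take $G=\N(0,1)$, already standardized, so $\E[X^4]=3$ and $\Var_\zeta(X^2)=2$. For $\rho_{\text{S}}$ the standardized uniform is $X=\sqrt{12}\,(U-\tfrac12)$ with $U\sim\U(0,1)$, i.e.\ $X$ is uniform on $[-\sqrt{3},\sqrt{3}]$; a direct integration gives $\E[X^4]=\tfrac{9}{5}$ and hence $\Var_{\rho_{\text{S}}}(X^2)=\tfrac{4}{5}$. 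For $\beta$ the standardized symmetric $\Bern(1/2)$ variable takes the values $\pm 1$ each with probability $1/2$, so that $X^2\equiv 1$ is deterministic and $\Var_\beta(X^2)=0$.

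Combining these three computations yields
\begin{align*}
\Var_\beta(X^2)=0\ \leq\ \Var_{\rho_{\text{S}}}(X^2)=\tfrac{4}{5}\ \leq\ \Var_\zeta(X^2)=2,
\end{align*}
so that applying Corollary~\ref{cor:optimal:concordance:inducing:functions:fundamental:frechet:copulas} Part~\ref{item:preference:order:funndamental:frechet:copuas} twice gives $\rho_{\text{S}}\leq_{\mathcal D}\beta$ and $\zeta\leq_{\mathcal D}\rho_{\text{S}}$, hence the claim.

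The argument is essentially a bookkeeping calculation, so there is no deep obstacle; the only point requiring care is the standardization. Because $\Var_G(X^2)$ is \emph{not} invariant under location shifts (unlike $\kappa_G$ itself), one must be sure to compute it for the mean-zero, unit-variance representative that the canonical estimator actually uses. The Bernoulli case is the one to handle attentively: after standardization its support is $\{-1,1\}$, which makes $X^2$ constant and immediately gives the optimal value $\Var_\beta(X^2)=0$, consistent with the announced optimality of Blomqvist's beta.
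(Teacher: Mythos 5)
Your proposal is correct and follows essentially the same route as the paper's proof: both reduce the claim to Corollary~\ref{cor:optimal:concordance:inducing:functions:fundamental:frechet:copulas} Part~\ref{item:preference:order:funndamental:frechet:copuas} and verify $\Var_\beta(X^2)=0\leq \Var_{\rho_{\text{S}}}(X^2)=4/5\leq \Var_\zeta(X^2)=2$ for the standardized concordance-inducing distributions ($\Bern(1/2)$ on $\{-1,1\}$, $\U(-\sqrt{3},\sqrt{3})$ and $\N(0,1)$). The only difference is that you spell out the moment computations via $\Var_G(X^2)=\E_G[X^4]-1$, which the paper states without detail; your remark on the non-invariance of $\Var_G(X^2)$ under location shifts is a sound point of care, matching the paper's use of standardized representatives.
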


\subsection{Optimality of Blomqvist's beta}\label{subsec:optimality:Blomqvist}

In this section, we show that Blomqvist's beta is an optimal $G$-transformed rank correlation under some conditions on $\mathcal D \in \mathcal C_2$.
For $G \in \mathcal G_4$, we identify $G \in \mathcal G_4$ with $\kappa_G$, and thus we allow $\sigma_{G}^2$ to be written as $\sigma_\beta^2$ for the standardized symmetric Bernoulli distribution $G \in \mathcal G_4$.

\begin{definition}[Balancedness of copulas]\label{def:balanced:copulas}
Let
$$p(C)=C(1/2,1/2)+\bar C(1/2,1/2),\quad C \in \mathcal C_2.
$$
A copula $C \in \mathcal C_2$ is called
\begin{enumerate}
\item[(i)] balanced if $p(C)=1/2$,
\item[(ii)] imbalanced if $p(C)\neq 1/2$,
\item[(iii)] totally positively imbalanced (TPI) if $p(C)=1$,
\item[(iv)]totally negatively imbalanced (TNI) if $p(C)=0$.
\end{enumerate}
\end{definition}

It is straightforward to check that $\Pi$ and $(M+W)/2$ are balanced, $M$ is TPI and $W$ is TNI.

\begin{proposition}[Asymptotic variance of Blomqvist's beta]
\label{prop:asymptotic:variances:beta}
Let $C \in \mathcal C_2$.
Then the following properties hold for $\beta$.
\begin{enumerate}
\item\label{item:optimal:shift:asymptotic:variances:beta} $\mu_{\ast}(\beta,C)=0$;
\item\label{item:explicit:form:asymptotic:variances:beta} $\sigma_\beta^2(C)=4p(C)(1-p(C))=1-\beta^2(C)$;
\item\label{item:upper:lower:bounds:asymptotic:variances:beta} $0\leq \sigma_\beta^2(C)\leq 1$;
\item\label{item:lower:bound:asymptotic:variances:beta} $\sigma_\beta^2(C)=0$ if and only if $C$ is a TPI or TNI copula;
\item\label{item:upper:bound:asymptotic:variances:beta} $\sigma_\beta^2(C)=1$ if and only if $C$ is balanced.
\end{enumerate}
\end{proposition}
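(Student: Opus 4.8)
The plan is to exploit the single structural fact that drives every part of this proposition: for the standardized symmetric Bernoulli $G$ on $\{-1,1\}$, the transformed variables $X=G^\i(U)$ and $Y=G^\i(V)$ with $(U,V)\sim C$ take values in $\{-1,1\}$, so that $X^2=Y^2=1$ almost surely, while $\E[X]=\E[Y]=0$ and $\Var_G(X^2)=0$. First I would record this observation, since it trivializes the covariance structure. Given it, Part~\ref{item:optimal:shift:asymptotic:variances:beta} is immediate from the optimal-shift formula: since $\E[X+Y]=0$, its numerator is $\Cov(XY,X+Y)=\E[XY(X+Y)]=\E[X^2Y]+\E[XY^2]=\E[Y]+\E[X]=0$, using $X^2=Y^2=1$; hence $\mu_{\ast}(\beta,C)=0$ irrespective of whether $\Var(X+Y)$ vanishes.

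For Part~\ref{item:explicit:form:asymptotic:variances:beta} I would apply the covariance formula~\eqref{eq:covariance:formula:asymptotic:variance}. Because $X^2=Y^2=1$ forces $\Cov(X^2,Y^2)=0$, the formula collapses to $\sigma_\beta^2(C)=1-\Cov(X,Y)^2=1-\beta(C)^2$, where $\Cov(X,Y)=\kappa_G(C)=\beta(C)$ by standardization. To recast this as $4p(C)(1-p(C))$, I would observe that the uniform margins give $\bar C(1/2,1/2)=C(1/2,1/2)$ by inclusion--exclusion, whence $p(C)=2C(1/2,1/2)$ and $\beta(C)=4C(1/2,1/2)-1=2p(C)-1$; substituting yields $1-\beta(C)^2=1-(2p(C)-1)^2=4p(C)(1-p(C))$.

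The remaining Parts~\ref{item:upper:lower:bounds:asymptotic:variances:beta}--\ref{item:upper:bound:asymptotic:variances:beta} then reduce to examining the quadratic $p\mapsto 4p(1-p)$ on $[0,1]$. The Fr\'echet--Hoeffding bounds give $0=W(1/2,1/2)\le C(1/2,1/2)\le M(1/2,1/2)=1/2$, so $p(C)\in[0,1]$; the quadratic ranges over $[0,1]$ there, equals $0$ exactly at $p(C)\in\{0,1\}$ (TNI or TPI) and equals $1$ exactly at $p(C)=1/2$ (balanced), matching Definition~\ref{def:balanced:copulas}. I expect no genuine obstacle: once $X^2=Y^2=1$ is noted everything is formulaic, and the only non-mechanical ingredients are the one-line identity $\bar C(1/2,1/2)=C(1/2,1/2)$ and the algebraic rearrangement $1-(2p-1)^2=4p(1-p)$.
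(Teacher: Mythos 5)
Your proof is correct, and it rests on the same structural fact as the paper's proof --- that the standardized symmetric Bernoulli transform makes $X,Y\in\{-1,1\}$ --- but the bookkeeping differs in two small ways. For Part~\ref{item:explicit:form:asymptotic:variances:beta} the paper never invokes the covariance formula~\eqref{eq:covariance:formula:asymptotic:variance}: it reads off that $XY$ is itself a two-point variable equal to $1$ with probability $p(C)$ and $-1$ with probability $1-p(C)$, so $\Var(XY)=4p(C)(1-p(C))$ directly, and this explicit law of $XY$ is what gets reused in Parts~\ref{item:lower:bound:asymptotic:variances:beta} and~\ref{item:upper:bound:asymptotic:variances:beta}; you instead obtain $1-\beta(C)^2$ first from $\Cov(X^2,Y^2)=0$ and recover $4p(C)(1-p(C))$ algebraically --- equivalent, and arguably instructive, since it exhibits $\beta$ as the case $\Var_G(X^2)=0$ of the general formula, which is precisely the mechanism behind the optimality results of Section~\ref{subsec:optimality:Blomqvist}. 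For Part~\ref{item:optimal:shift:asymptotic:variances:beta} the paper enumerates the values $X+Y\in\{2,-2,0\}$ and $XY\in\{1,-1\}$ over the four quadrant events and uses $C(1/2,1/2)=\bar C(1/2,1/2)$ to compute $\E[(X+Y)XY]=2\bar C(1/2,1/2)-2C(1/2,1/2)=0$; your identity $\E[XY(X+Y)]=\E[X^2Y]+\E[XY^2]=\E[Y]+\E[X]=0$ shortcuts the case analysis entirely and needs only the standardization $\E[X]=\E[Y]=0$. Two cosmetic remarks: the appeal to the Fr\'echet--Hoeffding bounds for $p(C)\in[0,1]$ is unnecessary, since $p(C)=\Prob(XY=1)$ is a probability by construction; and your parenthetical treatment of the degenerate case is fine, since $\mu_{\ast}$ is defined to equal $0$ whenever $\Var(X+Y)=0$.
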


\begin{remark}[Asymptotic variance of $\beta$ for elliptical copulas]
\label{remark:asymptotic:variances:Blomqvist:beta:elliptical}
Blomqvist's beta admits an explicit form $\beta(C)=\frac{2}{\pi}\arcsin (\rho)$ when $C$ is an elliptical copula with correlation parameter $\rho\in[-1,1]$.
Therefore, by Proposition~\ref{prop:asymptotic:variances:beta} Part~\ref{item:explicit:form:asymptotic:variances:beta},
we have that $\sigma_{\beta}^2(C)=1-\left(\frac{2}{\pi}\arcsin (\rho)\right)^2$,
which coincides with the result derived in \cite[Proposition~9]{schmid2007nonparametric}.
\end{remark}

Next we prove the optimality of Blomqvist's beta under certain conditions on $\mathcal D$.

\begin{proposition}[Optimality of Blomqvist's beta]\label{prop:optimality:blomqvist:beta}
Let $\mathcal D \subseteq \mathcal C_2$ and $\beta \in \mathcal H\subseteq \mathcal G_4$.
\begin{enumerate}
\item\label{item:optimality:blomqvist:beta:lower}
If $C^\ast \in \mathcal D$ for some TPI or TNI copula $C_\ast$, then $\beta \in \underline G_\ast(\mathcal H,\mathcal D)$.
\item\label{item:optimality:blomqvist:beta:upper}
If $\Pi \in \mathcal D$, then $\beta \in \overline G_\ast(\mathcal H,\mathcal D)$.
\item\label{item:optimality:blomqvist:beta}
If $\Pi,\ C_\ast \in \mathcal D$ for some TPI or TNI copula $C_\ast$, then $\beta \in G_\ast(\mathcal H,\mathcal D)$.
\end{enumerate}
Statements \ref{item:optimality:blomqvist:beta:lower},~\ref{item:optimality:blomqvist:beta:upper} and~\ref{item:optimality:blomqvist:beta} remain valid if $\sigma_G^2(C)$, $G \in \mathcal G_4$, in Definitions~\ref{Def:best:worst:asymptotic:variances},~\ref{Def:optimal:best:worst:asymptotic:variances} and~\ref{def:preference:optimal:concordance:inducing:distribution} is replaced by the optimally shifted asymptotic variance $\sigma_{G_{\mu_{\ast}}}^2(C)$.
\end{proposition}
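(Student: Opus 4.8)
The plan is to prove all three parts by combining the sharp two-sided bounds on $\sigma_\beta^2$ from Proposition~\ref{prop:asymptotic:variances:beta} with the universal bounds on $\sigma_G^2$ from Proposition~\ref{prop:basic:properties:asymptotic:variance}. The guiding observation is that $\beta$ realizes exactly the extreme values that bound every other concordance-inducing distribution: $\sigma_\beta^2$ is never smaller than the universal floor $0$ and never larger than the value $\sigma_G^2(\Pi)=1$ shared by all $G\in\mathcal G_4$, and moreover $\beta$ attains both of these at the copulas assumed to lie in $\mathcal D$.

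First I would handle Part~\ref{item:optimality:blomqvist:beta:lower}. If $C_\ast\in\mathcal D$ is TPI or TNI, then $\sigma_\beta^2(C_\ast)=0$ by Proposition~\ref{prop:asymptotic:variances:beta} Part~\ref{item:lower:bound:asymptotic:variances:beta}, so $\underline\sigma_\beta^2(\mathcal D)=0$ since $\sigma_\beta^2\geq 0$. For an arbitrary $G\in\mathcal H$, the universal lower bound $\sigma_G^2(C)\geq 0$ from Proposition~\ref{prop:basic:properties:asymptotic:variance} Part~\ref{item:bounds:asymptotic:variance} gives $\underline\sigma_G^2(\mathcal D)\geq 0$. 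Hence $\underline\sigma_\beta^2(\mathcal D)=0=\inf_{G\in\mathcal H}\underline\sigma_G^2(\mathcal D)$, and since $\beta\in\mathcal H$ attains this infimum, $\beta\in\underline G_\ast(\mathcal H,\mathcal D)$.

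Next, for Part~\ref{item:optimality:blomqvist:beta:upper} I would use that $\Pi$ is balanced, so $\sigma_\beta^2(\Pi)=1$ by Proposition~\ref{prop:asymptotic:variances:beta} Part~\ref{item:upper:bound:asymptotic:variances:beta}; combined with the upper bound $\sigma_\beta^2\leq 1$ from Part~\ref{item:upper:lower:bounds:asymptotic:variances:beta} this yields $\overline\sigma_\beta^2(\mathcal D)=1$. For an arbitrary $G\in\mathcal H$, Proposition~\ref{prop:basic:properties:asymptotic:variance} Part~\ref{item:fundamental:copulas:asymptotic:variance} gives $\sigma_G^2(\Pi)=1$, so $\overline\sigma_G^2(\mathcal D)\geq\sigma_G^2(\Pi)=1$. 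Thus $\overline\sigma_\beta^2(\mathcal D)=1=\inf_{G\in\mathcal H}\overline\sigma_G^2(\mathcal D)$ and $\beta\in\overline G_\ast(\mathcal H,\mathcal D)$. Part~\ref{item:optimality:blomqvist:beta} is then immediate: under its hypotheses both $C_\ast$ and $\Pi$ lie in $\mathcal D$, so Parts~\ref{item:optimality:blomqvist:beta:lower} and~\ref{item:optimality:blomqvist:beta:upper} give $\beta\in\underline G_\ast\cap\overline G_\ast=G_\ast(\mathcal H,\mathcal D)$.

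Finally I would check that nothing changes when $\sigma_G^2(C)$ is replaced throughout by the optimally shifted variance $\sigma_{G_{\mu_\ast}}^2(C)$. On the $\beta$-side the two coincide, since $\mu_\ast(\beta,C)=0$ for every $C$ by Proposition~\ref{prop:asymptotic:variances:beta} Part~\ref{item:optimal:shift:asymptotic:variances:beta}, so the computations of $\underline\sigma_\beta^2(\mathcal D)=0$ and $\overline\sigma_\beta^2(\mathcal D)=1$ are untouched. The two universal facts I relied on for arbitrary $G$ also survive: $\sigma_{G_{\mu_\ast}}^2(C)$ is a genuine variance and hence still nonnegative, and $\Pi$ is radially symmetric, so $\mu_\ast(G,\Pi)=0$ by Proposition~\ref{prop:sufficient:condition:optimal:shift:zero} and therefore $\sigma_{G_{\mu_\ast}}^2(\Pi)=\sigma_G^2(\Pi)=1$. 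The main (and essentially only) delicate point is exactly this last one: optimal shifting could in principle lower the pivotal value at $\Pi$ below $1$ and break the upper-bound argument, and it is the radial symmetry of $\Pi$ that rules this out. With the floor $0$ and the value $1$ at $\Pi$ preserved, the arguments for all three parts carry over verbatim.
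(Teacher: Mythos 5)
Your proof is correct and follows essentially the same route as the paper's: the chain $\underline\sigma_\beta^2(\mathcal D)=\sigma_\beta^2(C_\ast)=0\leq\underline\sigma_G^2(\mathcal D)$ via Proposition~\ref{prop:asymptotic:variances:beta} for the best variance, and $\overline\sigma_\beta^2(\mathcal D)=\sigma_\beta^2(\Pi)=1=\sigma_G^2(\Pi)\leq\overline\sigma_G^2(\mathcal D)$ via Propositions~\ref{prop:asymptotic:variances:beta} and~\ref{prop:basic:properties:asymptotic:variance} for the worst. Your treatment of the optimally shifted case is in fact slightly more explicit than the paper's (which merely asserts the relations remain valid), since you pinpoint that radial symmetry of $\Pi$ and $\mu_\ast(\beta,C)=0$ are exactly what preserve the pivotal values $0$ and $1$.
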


Proposition~\ref{prop:optimality:blomqvist:beta} states that Blomqvist's beta is an optimal choice of $G$-transformed rank correlation for possibly typical choices of $\mathcal D$, such as $\mathcal C_2$, $\mathcal C_2^{\succeq}= \{C \in \mathcal C_2: C \succeq \Pi\}$ or $\mathcal C_2^{\preceq}= \{C \in \mathcal C_2: C \preceq \Pi\}$.

\subsection{Uniqueness of the optimality of $\beta$}\label{subsec:uniqueness:optimality:beta}

In this section we investigate whether Blomqvist's beta is the unique optimal $G$-transformed rank correlation, that is, whether $G_{\ast}(\mathcal H,\mathcal D)=\{\beta\}$.
The next proposition states that this uniqueness holds under some condition on $\mathcal D$.

\begin{proposition}[Uniqueness of $\beta$ for $\overline G_{\ast}(\mathcal H,\mathcal D)$ and $G_{\ast}(\mathcal H,\mathcal D)$]
\label{prop:uniqueness:beta:worst:G}
Let $\mathcal H \subseteq \mathcal G_4$ and $\mathcal D \subseteq \mathcal C_2$ be such that
\begin{enumerate}
\item $\beta \in \mathcal H$;
\item $\Pi,\,(M+W)/2 \in \mathcal D$ and $\mathcal D$ contains a TPI or TNI copula.
\end{enumerate}
Then $\overline G_{\ast}(\mathcal H,\mathcal D)=\{\beta\}$ and thus $G_{\ast}(\mathcal H,\mathcal D)=\{\beta\}$.
The statement remains valid if $\sigma_G^2(C)$, $G \in \mathcal G_4$, in Definitions~\ref{Def:best:worst:asymptotic:variances},~\ref{Def:optimal:best:worst:asymptotic:variances} and~\ref{def:preference:optimal:concordance:inducing:distribution} is replaced by the optimally shifted asymptotic variance $\sigma_{G_{\mu_{\ast}}}^2(C)$.
\end{proposition}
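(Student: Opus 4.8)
The plan is to reduce everything to the uniqueness of the worst-variance optimizer, since the optimal value and the membership of $\beta$ are already in hand. Because $\Pi \in \mathcal D$, Proposition~\ref{prop:optimality:blomqvist:beta} Part~\ref{item:optimality:blomqvist:beta:upper} already gives $\overline\sigma_{\ast}^2(\mathcal H,\mathcal D)=1$ and $\beta \in \overline G_{\ast}(\mathcal H,\mathcal D)$. Hence the whole content of the proposition is to show that no other $G\in\mathcal H$ can attain this optimal worst variance; everything else follows formally. The key extra ingredient will be the sharp value $\sigma_G^2\bigl(\frac{M+W}{2}\bigr)=1+\Var_G(X^2)$ from Proposition~\ref{prop:basic:properties:asymptotic:variance} Part~\ref{item:bounds:asymptotic:variance}, which turns the hypothesis $\frac{M+W}{2}\in\mathcal D$ into a lower bound on the worst variance that is sensitive to $\Var_G(X^2)$.

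For the uniqueness step I would take any $G\in\overline G_{\ast}(\mathcal H,\mathcal D)$, so that $\overline\sigma_G^2(\mathcal D)=\overline\sigma_{\ast}^2(\mathcal H,\mathcal D)=1$, and exploit $\frac{M+W}{2}\in\mathcal D$ to write
\begin{align*}
1=\overline\sigma_G^2(\mathcal D)\geq \sigma_G^2\Bigl(\tfrac{M+W}{2}\Bigr)=1+\Var_G(X^2),
\end{align*}
which forces $\Var_G(X^2)=0$. Next I would argue that this degeneracy identifies $G$ completely: for the standardized $X\sim G$ one has $\E[X^2]=\Var(X)+\E[X]^2=1$, so $\Var_G(X^2)=0$ yields $X^2=1$ almost surely, i.e.\ $X\in\{-1,1\}$ a.s.; radial symmetry of $G$ (equivalently $X\deq -X$ about the zero mean) balances the two atoms, giving $\Prob(X=1)=\Prob(X=-1)=1/2$. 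Thus $G$ is the standardized symmetric Bernoulli law, i.e.\ $G=\beta$, and therefore $\overline G_{\ast}(\mathcal H,\mathcal D)=\{\beta\}$. To close the $G_{\ast}$ claim I would use $G_{\ast}(\mathcal H,\mathcal D)=\underline G_{\ast}(\mathcal H,\mathcal D)\cap\overline G_{\ast}(\mathcal H,\mathcal D)\subseteq\{\beta\}$ together with $\beta\in G_{\ast}(\mathcal H,\mathcal D)$, which holds by Proposition~\ref{prop:optimality:blomqvist:beta} Part~\ref{item:optimality:blomqvist:beta} since both $\Pi$ and a TPI or TNI copula lie in $\mathcal D$; hence $G_{\ast}(\mathcal H,\mathcal D)=\{\beta\}$.

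For the optimally shifted version the only thing to check is that replacing $\sigma_G^2(C)$ by $\sigma_{G_{\mu_{\ast}}}^2(C)$ leaves the two relevant values intact. Both $\Pi$ and $\frac{M+W}{2}$ are radially symmetric---the latter because radial symmetry is preserved under convex combinations and both $M$ and $W$ are radially symmetric---so Proposition~\ref{prop:sufficient:condition:optimal:shift:zero} gives $\mu_{\ast}=0$ at these copulas, whence $\sigma_{G_{\mu_{\ast}}}^2(\Pi)=\sigma_G^2(\Pi)=1$ and $\sigma_{G_{\mu_{\ast}}}^2\bigl(\frac{M+W}{2}\bigr)=1+\Var_G(X^2)$. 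Since $\mu_{\ast}(\beta,C)=0$ for every $C$ by Proposition~\ref{prop:asymptotic:variances:beta} Part~\ref{item:optimal:shift:asymptotic:variances:beta}, the optimal worst value remains $1$, and the identical chain of inequalities again delivers $\Var_G(X^2)=0$ and thus $G=\beta$.

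The main (and only genuine) obstacle is the implication $\Var_G(X^2)=0\Rightarrow G=\beta$: one must combine the standardization, which fixes $\E[X^2]=1$ and hence the support $\{-1,1\}$, with radial symmetry, which balances the two atoms, so that the vanishing variance of $X^2$ characterizes the symmetric Bernoulli distribution rather than merely constraining its spread. Everything else is a direct application of the previously established optimality and boundedness results.
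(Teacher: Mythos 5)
Your proof is correct and takes essentially the same route as the paper's: both exploit $(M+W)/2\in\mathcal D$ via the sharp value $\sigma_G^2\bigl((M+W)/2\bigr)=1+\Var_G(X^2)$, combined with $\Pi\in\mathcal D$ pinning $\overline\sigma_{\ast}^2(\mathcal H,\mathcal D)=1$, to force $\Var_G(X^2)=0$ and hence $G=\beta$, and both handle the optimally shifted version through radial symmetry of $\Pi$ and $(M+W)/2$ (together with $\mu_{\ast}(\beta,C)=0$). Your explicit verification that $\Var_G(X^2)=0$ with $\E[X^2]=1$ and mean zero characterizes the standardized symmetric Bernoulli distribution merely spells out a step the paper asserts without detail.
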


Proposition~\ref{prop:uniqueness:beta:worst:G} does not address whether $\underline G_\ast(\mathcal H,\mathcal D)=\{\beta\}$.
This, however, is rarely the case as we will see in what follows.

For given $\mathcal H \subseteq \mathcal G_4$ and $\mathcal D \subseteq \mathcal C_2$, assume that $\beta \in \mathcal H$ and that $\mathcal D$ contains at least one TPI or TNI copula.
Then the following equivalence relations hold by Proposition~\ref{prop:asymptotic:variances:beta}~Part~\ref{item:lower:bound:asymptotic:variances:beta}:
\begin{align}\label{eq:equivalent:conditions:attaining:lower:bound:asymptotic:variance:bernoulli}
\nonumber G \in \underline G_{\ast}(\mathcal H,\mathcal D) \quad &\Leftrightarrow \quad
\text{there exists } C \in \mathcal D \text{ such that } \sigma_G^2(C)=0\\
& \Leftrightarrow \quad G^\i(U)G^\i(V)\aseq a \text{ for some } a \in \IR \text{ and } (U,V)\sim C \in \mathcal D.
\end{align}
The next proposition provides necessary conditions on $a \in \IR$, $G \in \mathcal H$ and $C \in \mathcal D$ in~\eqref{eq:equivalent:conditions:attaining:lower:bound:asymptotic:variance:bernoulli}.

\begin{proposition}[Necessary conditions on  $G \in \underline G_{\ast}(\mathcal H,\mathcal D)$]\label{prop:necessary:conditions:g:attain:optimal:best:asymptotic:variance}
For $\mathcal H \subseteq \mathcal G_4$ and $\mathcal D \subseteq \mathcal C_2$, suppose that $\beta \in \mathcal H$ and that $\mathcal D$ contains at least one TPI or TNI copula.
If $G \in \underline G_{\ast}(\mathcal H,\mathcal D)$, then $C \in \mathcal D$ and $a \in \IR$ in~\eqref{eq:equivalent:conditions:attaining:lower:bound:asymptotic:variance:bernoulli} satisfy the following conditions:
\begin{enumerate}
\item\label{item:necessary:conditions:point:mass:case} If $\Prob(X=0)>0$ for $X\sim G$, then $a=0$ and $\Prob(X=0)\geq 1/2$;
\item\label{item:necessary:conditions:no:point:mass:case} If $\Prob(X=0)=0$, then $a \neq 0$ and the copula $C$ is either TPI or TNI with $0<a\leq 1$ if $C$ is TPI and $-1\leq a<0$ if $C$ is TNI.
Moreover, the conditional distribution function $$G_{+}(x)=\Prob(X\leq x\mid X\geq 0)=2G(x)-1,\quad x> 0,$$ satisfies
\begin{align}\label{eq:G:plus:identity}
\E_{G_{+}}[Z]\geq |a|^{1/2},\quad Z\sim G_{+}\quad \text{and}\quad
G_{+}(x)=1-G_{+}\left(\frac{|a|}{x}-\right),\quad x> 0.
\end{align}
In particular, it holds that $\Prob(Z>|a|^{1/2})=\Prob(Z<|a|^{1/2})$ for $Z \sim G_{+}$.
\end{enumerate}
\end{proposition}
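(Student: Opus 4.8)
The plan is to exploit the characterization~\eqref{eq:equivalent:conditions:attaining:lower:bound:asymptotic:variance:bernoulli}: since $G \in \underline G_{\ast}(\mathcal H,\mathcal D)$, there exist $C \in \mathcal D$ and $a \in \IR$ with $XY \aseq a$, where $(X,Y)=(G^\i(U),G^\i(V))$ and $(U,V)\sim C$. Two facts about the margins drive everything. First, $X$ and $Y$ each follow $G$. Second, since $G \in \mathcal G_4$ is radially symmetric with mean zero, $X \deq -X$; in the no-atom case $\Prob(X=0)=0$ this forces $\Prob(X<0)=\Prob(X>0)=1/2$, so $0$ is the median and $G(0)=1/2$. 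I will also repeatedly use the generalized-inverse identity $G^\i(u)\le x \Leftrightarrow u\le G(x)$, which gives $\{X\le 0\}=\{U\le 1/2\}$ and, since $\Prob(X=0)=0$, $\{X\le 0\}\aseq\{X<0\}$ (and analogously for $Y$).

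For Part~\ref{item:necessary:conditions:point:mass:case}, on the positive-probability event $\{X=0\}$ one has $XY=0$, so $XY\aseq a$ forces $a=0$. Then $XY\aseq 0$ means $\Prob(X\neq 0,\ Y\neq 0)=0$; writing $p=\Prob(X\neq 0)=\Prob(Y\neq 0)$ and applying inclusion--exclusion to $\{X\neq 0\}\cup\{Y\neq 0\}$ gives $2p=\Prob(X\neq 0\text{ or }Y\neq 0)\le 1$, i.e.\ $\Prob(X=0)\ge 1/2$.

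For Part~\ref{item:necessary:conditions:no:point:mass:case}, when $\Prob(X=0)=0$ both $X$ and $Y$ are a.s.\ nonzero, so $a=XY\neq 0$. Using $\{U\le 1/2\}\aseq\{X<0\}$ and the analogue for $Y$, I would compute $p(C)=C(1/2,1/2)+\bar C(1/2,1/2)=\Prob(X<0,Y<0)+\Prob(X>0,Y>0)=\Prob(XY>0)$. As $XY\aseq a$, this is $1$ when $a>0$ and $0$ when $a<0$, so $C$ is TPI resp.\ TNI. The bound $|a|\le 1$ follows from $a=\E[XY]=\Cov(X,Y)=\rho(X,Y)$ and $|\rho|\le 1$, which together with the sign analysis yields $0<a\le 1$ for TPI and $-1\le a<0$ for TNI.

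The core of the argument is the distributional fixed-point identity. Setting $Z\sim G_{+}$, symmetry and the absence of an atom at $0$ give $Z\deq |X|$ and likewise $|Y|\deq Z$; since $Y\aseq a/X$ we have $|Y|=|a|/|X|$, hence $Z\deq |a|/Z$. Computing the CDF of $|a|/Z$ via $|a|/Z\le x \Leftrightarrow Z\ge |a|/x$ gives $\Prob(|a|/Z\le x)=1-G_{+}(|a|/x-)$, and equating with $G_{+}(x)$ produces exactly the functional equation $G_{+}(x)=1-G_{+}(|a|/x-)$ in~\eqref{eq:G:plus:identity}. Evaluating at the fixed point $x=|a|^{1/2}$ (where $|a|/x=x$) yields $\Prob(Z>|a|^{1/2})=\Prob(Z<|a|^{1/2})$. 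Finally, $\E_{G_{+}}[Z]\ge |a|^{1/2}$ follows by taking expectations in the pointwise AM--GM bound $Z+|a|/Z\ge 2|a|^{1/2}$ together with $\E[Z]=\E[|a|/Z]$ from $Z\deq |a|/Z$. The delicate point throughout is the bookkeeping of almost-sure equalities through the generalized inverse and the careful tracking of one-sided limits, so that the functional equation correctly features the left limit $G_{+}(|a|/x-)$ (rather than $G_{+}(|a|/x)$) even at atoms of $G_{+}$; establishing $Z\deq |a|/Z$ cleanly is where the main care is needed.
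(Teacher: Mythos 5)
Your proof is correct, and while it follows the paper's broad outline (atom case via the same counting argument, sign of $a$ via $p(C)=\Prob(XY>0)$), it replaces the paper's three key quantitative steps with genuinely different and cleaner arguments. For the bound $|a|\leq 1$, you simply note $a=\E[XY]=\Cov(X,Y)=\rho(X,Y)$ (since $G\in\mathcal G_4$ is standardized) and invoke $|\rho|\leq1$; the paper instead conditions on the quadrant events, proves $\E[X_{+}^2]=1$ via the symmetry $X_{+}\deq -X_{-}$, and applies Jensen's inequality to $\E[(a/Y_{+})^2]$ to get $a^2\leq 1$. For the functional equation, you work globally with $|X|,|Y|\sim G_{+}$ and the a.s.\ identity $|Y|=|a|/|X|$, which gives $Z\deq|a|/Z$ in one stroke and handles the TPI and TNI cases uniformly; the paper instead introduces the conditional variables $X_{\pm},Y_{\pm}$, derives everything in the TPI case from $X_{+}\aseq a/Y_{+}$, and then reduces TNI to TPI via $-X_{-},-Y_{-}$ and $-a$ --- your unification via absolute values eliminates that second case analysis entirely (your identification $|X|\sim G_{+}$ agrees with the paper's conditional definition of $G_{+}$ because $\Prob(X>0)=1/2$ and $X\deq -X$). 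For the mean condition $\E_{G_{+}}[Z]\geq|a|^{1/2}$, you use the pointwise AM--GM bound $Z+|a|/Z\geq 2|a|^{1/2}$ together with $\E[|a|/Z]=\E[Z]$ from the distributional identity, whereas the paper applies Jensen to $\E[a/Y_{+}]$; both are one-liners, but yours needs only the already-established fixed-point relation. Your bookkeeping is also sound at the delicate points: $G(0)=1/2$ and $\{X\leq 0\}\aseq\{X<0\}\aseq\{U\leq 1/2\}$ require $\Prob(X=0)=0$, which you have, and the left limit in $\Prob(|a|/Z\leq x)=1-G_{+}\bigl((|a|/x)-\bigr)$ is handled correctly, so the functional equation and its evaluation at the fixed point $x=|a|^{1/2}$ (yielding $\Prob(Z>|a|^{1/2})=\Prob(Z<|a|^{1/2})$) match the paper's \eqref{eq:G:plus:identity} exactly. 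The trade-off: the paper's conditional-variable machinery makes the TPI/TNI structure of $C$ visible inside the argument, while your route is shorter, avoids Jensen twice, and isolates the one place where the copula structure matters (determining the sign of $a$) from the purely marginal fixed-point analysis.
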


By Proposition~\ref{prop:necessary:conditions:g:attain:optimal:best:asymptotic:variance},
not all $G \in \mathcal G_4$ and $C \in \mathcal C_2$ can attain the optimal best asymptotic variance $\sigma_G^2(C)=0$.
The following examples show non-Bernoulli concordance-inducing distributions attaining this lower bound.
Let $M(n,\{J_i\},\pi,w)$ denote a shuffle-of-$M$ with $n$ being the number of connected components in its support, $\{J_i\}=\{J_1,\dots,J_n\}$ being a finite partition of $[0,1]$ into $n$ closed subintervals, $\pi$ being a permutation of $\{1,\dots,n\}$ and $w:\{1,\dots,n\}\rightarrow \{-1,1\}^n$ being a function indicating whether the strip $J_i\times J_{\pi(i)}$ is flipped ($w(i)=1$) or not ($w(i)=-1$); see \cite[Section~3.2.3]{nelsen2006introduction}.

\begin{example}[Non-Bernoulli concordance-inducing distributions in $\underline G_\ast(\mathcal G_4,\mathcal D)$]\label{example:non:bernoulli:optimal:best:g:functions}
\hspace{2mm}
\begin{enumerate}
\item The case when $\Prob(X=0)>0$: Let $X\sim G$ be an equally weighted mixture of $0$ and $\Unif(-\sqrt{6},\sqrt{6})$. Then $\E[X]=0$, $\Var(X)=1$ and $\E[X^4]<\infty$, and thus $G \in \mathcal G_4$.
This is Case~\ref{item:necessary:conditions:point:mass:case} of Proposition~\ref{prop:necessary:conditions:g:attain:optimal:best:asymptotic:variance} since $\Prob(X=0)=1/2$.
Consider $C_1=M(4,\cup_{i=1}^4[(i-1)/4,i/4],\{2,1,4,3\},\bone_4)$, $C_2=M(4,\cup_{i=1}^4[(i-1)/4,i/4],\{3,4,1,2\},\bone_4)$ and $C_3=M(4,\cup_{i=1}^4[(i-1)/4,i/4],\{2,4,1,3\},\bone_4)$, where $\bone_d=(1,\dots,1)\in\IR^d$.
Then $C_1$ is TPI, $C_2$ is TNI and $C_3$ is neither TPI nor TNI.
Moreover, they satisfy $\sigma_G^2(C_k)=0$, $k=1,2,3$, since $G^\i(U)G^\i(V)\aseq 0$ with $(U,V)\sim C_k$ for $k=1,2,3$.
\item The case when $\Prob(X=0)=0$:
Let $X\sim G$ be a discrete uniform distribution on the four points $\{-a/b,-b,b,a/b\}$ where $a=1/2^{1/2}$ and 
$b=(1-2^{-1/2})^{1/2}$ with $b \approx 0.541$ and $a/b \approx 1.307$.
Then it is straightforward to check that $G \in \mathcal G_4$.
Define $(X,Y)=(G^\i(U),G^\i(V))$ with $(U,V)\sim C_4= M(4,\cup_{i=1}^4[(i-1)/4,i/4],\{2,1,4,3\},-\bone_4)$.
Then $(X,Y)=(-a/b,-b)$, $(-b,-a/b)$, $(b,a/b)$ and $(a/b,b)$ are equiprobable, and thus  $\sigma_G^2(C_4)=0$ since $XY\aseq a$.
This case belongs to Case~\ref{item:necessary:conditions:no:point:mass:case} of Proposition~\ref{prop:necessary:conditions:g:attain:optimal:best:asymptotic:variance} since $C_4$ is TPI, $0< a \leq 1$ and $\E_{G_{+}}[Z]\approx 0.924 > 0.841 \approx \sqrt{a}$.
\end{enumerate}
\end{example}

\section{Comparison of $\kappa_G$ and Kendall's tau}\label{sec:comaprison:kendall:tau}

In Section~\ref{subsec:optimality:Blomqvist}, we showed that Blomqvist's beta is an optimal $G$-transformed rank correlation under some conditions on $\mathcal D\subseteq \mathcal C_2$.
In this section, we show that Kendall's tau, although it is not a transformed rank correlation, shares certain optimal structure of Blomqvist's beta.

Kendall's tau $\tau:\mathcal C_2 \rightarrow \IR$ is defined by
\begin{align}\label{eq:kendall:tau}
\tau(C)=4\int_{[0,1]^2}C(u,v)\,\rd C(u,v)-1,
\end{align}
and is a measure of concordance; see \cite{scarsini1984}.
Moreover, it is not a $G$-transformed rank correlation since $\tau$ is not linear with respect to a mixture of copulas.
Since $\tau(C)=\rho(\bone_{\{U> \tilde U\}},\bone_{\{V> \tilde V\}})$ where $(U,V)\sim C$ and $(\tilde U,\tilde V)\sim C$ is an independent copy of $(U,V)$, $\tau$ can also be written as
\begin{align}\label{eq:kendall:representation:correlation}
\tau(C) = \rho(g(U,\tilde U),g(V,\tilde V)),\quad 
g(\ell,m)=\begin{cases}
1 & \text{ if } \ell> m ,\\
-1&  \text{ if }\ell \leq  m,\\
\end{cases}
\end{align}
by invariance of $\rho$ under location-scale transforms.
Assuming that the data-generating i.i.d.\,process from $C$ is available, we consider the following estimator of $\tau(C)$ to estimate a $G$-transformed rank correlation $\kappa_{G}$ based on the representation~\eqref{eq:kendall:representation:correlation}.

\begin{definition}[Canonical estimator of $\tau$]
\label{def:canonical:estimator:tau}
For $C \in \mathcal C_2$, the canonical estimator of $\tau$ is given by
\begin{align*}
\hat \tau=\hat \tau(C;n)=\frac{1}{n}\sum_{i=1}^n   g(U_i,\tilde U_i)g(V_i,\tilde V_i),\quad
(U_1,V_1),\dots,(U_n,V_n),
(\tilde U_1,\tilde V_1),\dots, (\tilde U_n,\tilde V_n) \iidsim C,\quad n\in \IN.
\end{align*}
\end{definition}

By the classical central limit theorem, the following asymptotic normality follows
\begin{align*}
\sqrt{n}\left\{\hat \tau - \tau(C)\right\} \darrow \N(0,\sigma_\tau^2(C)),
\quad  n \rightarrow \infty,
\end{align*}
where the asymptotic variance of $\hat \tau$ is given by
$$\sigma_\tau^2(C)=\Var(g(U,\tilde U)g(V,\tilde V)),\quad C \in \mathcal C_{2}.$$

For $(U,V),\  (\tilde U,\tilde V)\iidsim C$, write $X=g(U,\tilde U)$ and $Y=g(V,\tilde V)$.
As discussed in Section~\ref{subsec:optimal:location:shift}, location shift of $X$ and $Y$ does not change $\tau(C)=\rho(X,Y)$ but may affect $\sigma_\tau^2(C)$.
Thus we define the optimal location shift as follows.

\begin{definition}[Optimal shift of $g$]
For $C\in \mathcal C_2$, the optimal (location) shift of $g$ under $C$ is given by
\begin{align*}
\mu_{\ast} = \mu_{\ast}(\tau,C)=\begin{cases}
-\frac{\Cov(XY,\, X+Y)}{\Var(X+Y)}, & \text{ if } \Var(X+Y)>0,\\
0, & \text{ if } \Var(X+Y)=0.
\end{cases}
\end{align*}
where $X=g(U,\tilde U)$ and $Y=g(V,\tilde V)$ for $(U,V),\  (\tilde U,\tilde V)\iidsim C$.
\end{definition}

Basic properties of $\mu_{\ast}$ and $\sigma_\tau^2$ are collected in the next proposition.

\begin{proposition}[Basic properties of $\mu_{\ast}$ $\sigma_\tau^2(C)$]
\label{prop:basic:properties:tau:asymptotic:variance}
Let $C \in \mathcal C_2$.
Then the following properties hold for $\tau$:
\begin{enumerate}
\item\label{item:optimal:shift:tau:asymptotic:variance}
$\mu_\ast(\tau,C)=0$;
\item\label{item:explicit:form:tau:asymptotic:variance}
$\sigma_{\tau}^2(C)=4p_\tau(C)(1-p_\tau(C))=1-\tau^2(C)$
where
$p_\tau(C)=2\int_{[0,1]^2}C(u,v)\,\rd C(u,v)$;
\item\label{item:bounds:kendall:asymptotic:variance}
$0\leq \sigma_\tau^2(C) \leq 1$;
\item\label{item:lower:bound:kendall:asymptotic:variance}
$\sigma_{\tau}^2(C)=0$ if and only if $\tau(C)=1$ or $-1$, that is, $C=M$ or $W$, respectively;
\item\label{item:upper:bound:kendall:asymptotic:variance}
$\sigma_{\tau}^2(C)=1$ if and only if $\tau(C)=0$.
In particular, $\tau(C)=0$ when $C=C_{\nu_1}$ or $C=C_{\nu_2}$.
\end{enumerate}
\end{proposition}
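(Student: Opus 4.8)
The plan is to exploit the fact that, exactly as in the Blomqvist's beta case of Proposition~\ref{prop:asymptotic:variances:beta}, the transformed variables $X=g(U,\tilde U)$ and $Y=g(V,\tilde V)$ are $\{-1,1\}$-valued, so that $X^2=Y^2=(XY)^2=1$ almost surely. I would first record the two elementary facts driving everything. Since $U$ and $\tilde U$ are independent standard uniforms, $\Prob(U>\tilde U)=1/2$ gives $\E[X]=\E[Y]=0$ (ties having probability zero); and since $\Var(X)=\Var(Y)=1$, the representation~\eqref{eq:kendall:representation:correlation} yields $\tau(C)=\rho(X,Y)=\Cov(X,Y)=\E[XY]$.

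For Part~\ref{item:optimal:shift:tau:asymptotic:variance}, by definition of $\mu_\ast$ it suffices to treat the case $\Var(X+Y)>0$, where I would compute $\Cov(XY,X+Y)=\E[XY(X+Y)]=\E[X^2Y]+\E[XY^2]=\E[Y]+\E[X]=0$, using $\E[X+Y]=0$ together with $X^2=Y^2=1$; hence $\mu_\ast(\tau,C)=0$. For Part~\ref{item:explicit:form:tau:asymptotic:variance}, I would set $p_\tau(C)=\Prob(X=Y)=\Prob(XY=1)$ and observe that $\{X=Y\}=\{U>\tilde U,\,V>\tilde V\}\cup\{U\le\tilde U,\,V\le\tilde V\}$, so $p_\tau(C)$ is the probability of concordance of two independent draws from $C$; by exchangeability and conditioning this equals $2\E[C(U,V)]=2\int_{[0,1]^2}C\,\rd C$, consistent with the definition~\eqref{eq:kendall:tau} of $\tau$. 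Then $\E[XY]=2p_\tau(C)-1=\tau(C)$, and since $(XY)^2=1$ one gets $\sigma_\tau^2(C)=\Var(XY)=1-\E[XY]^2=1-\tau^2(C)=4p_\tau(C)(1-p_\tau(C))$. Part~\ref{item:bounds:kendall:asymptotic:variance} is then immediate because $x\mapsto 4x(1-x)$ maps $[0,1]$ into $[0,1]$.

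For Part~\ref{item:lower:bound:kendall:asymptotic:variance}, the identity $\sigma_\tau^2(C)=1-\tau^2(C)$ gives $\sigma_\tau^2(C)=0\iff\tau(C)=\pm1$; to identify the attaining copulas I would note that $\sigma_\tau^2(C)=\Var(XY)=0$ forces $XY$ to be almost surely constant in $\{-1,1\}$, so either $X=Y$ a.s.\ (full concordance, $C=M$) or $X=-Y$ a.s.\ (full discordance, $C=W$). For Part~\ref{item:upper:bound:kendall:asymptotic:variance}, the same identity gives $\sigma_\tau^2(C)=1\iff\tau(C)=0$; and since $\tau$ is a measure of concordance, the change-of-sign axiom applied to $(U_{\nu_1},V_{\nu_1})\aseq(1-U,V)$ and $(U_{\nu_2},V_{\nu_2})\aseq(U,1-V)$ yields $\tau(C_{\nu_1})=\tau(C_{\nu_2})=-\tau(C)$, so $C=C_{\nu_1}$ or $C=C_{\nu_2}$ forces $\tau(C)=-\tau(C)$, i.e.\ $\tau(C)=0$.

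All of the steps are short computations once the $\{-1,1\}$-valuedness of $X$ and $Y$ is in hand. The two points needing care are the identification $\Prob(X=Y)=2\int C\,\rd C$, which I would read off from the concordance-probability form of $\tau$ while noting that ties have probability zero, and the converse direction in Part~\ref{item:lower:bound:kendall:asymptotic:variance}, where concluding $C=M$ (resp.\ $C=W$) from $XY\aseq1$ (resp.\ $XY\aseq-1$) uses that almost-sure agreement (resp.\ disagreement) of the signs $\sign(U-\tilde U)$ and $\sign(V-\tilde V)$ over independent copies characterizes comonotonicity (resp.\ counter-monotonicity). I expect this last characterization to be the main obstacle, although it is standard for Kendall's tau.
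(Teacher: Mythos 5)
Your proposal is correct and follows essentially the same route as the paper: the same identification $p_\tau(C)=\Prob(X=Y)=2\int_{[0,1]^2}C\,\rd C$ with $\tau(C)=2p_\tau(C)-1$, the same reduction of Parts~3--5 to the identity $\sigma_\tau^2(C)=1-\tau^2(C)$, and the same use of the change-of-sign axiom for the $C_{\nu_1},C_{\nu_2}$ claim; your only variation is the slicker covariance computation $\Cov(XY,X+Y)=\E[X^2Y]+\E[XY^2]=0$ via $X^2=Y^2=1$, where the paper instead enumerates the values of $X+Y$ and $XY$ on the four sign events. The step you flag as the main obstacle---that $\tau(C)=\pm1$ iff $C=M$ or $W$---is exactly what the paper handles by citing \citet[Theorem~3]{embrechtsmcneilstraumann2002}, so invoking that standard result closes your argument completely.
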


\begin{remark}[Asymptotic variance of $\tau$ for elliptical copulas]\label{remark:explicit:forms:asymptotic:variances:Kendall:tau}
When $C$ is an elliptical copula with correlation parameter $\rho\in[-1,1]$, we have that $\tau(C)=2\arcsin (\rho)/\pi$; see \cite{hult2002multivariate}.
Therefore, by Proposition~\ref{prop:asymptotic:variance:kendall:tau} Part~\ref{item:explicit:form:tau:asymptotic:variance}, we have that
$\sigma_{\tau}^2(C)=1-\left(2\arcsin (\rho)/\pi\right)^2$, which also equals $\sigma_{\beta}^2(C)$ as derived in Remark~\ref{remark:asymptotic:variances:Blomqvist:beta:elliptical}.
\end{remark}

Similar to the case of $G$-transformed rank correlations, we consider the following best and worst asymptotic variances.

\begin{definition}[Best and worst asymptotic variances for $\tau$]
For $\mathcal D \subseteq C_2$, the best and worst asymptotic variances are defined by
\begin{align*}
\underline \sigma_{\tau}^2(\mathcal D)=\inf_{C \in \mathcal D}\sigma_{\tau}^2(C),
\quad
\overline \sigma_{\tau}^2(\mathcal D)=\sup_{C \in \mathcal D}\sigma_{\tau}^2(C),
\end{align*}
respectively.
If the infimum and supremum above are attainable, then their attaining copulas on $\mathcal D$ are defined, respectively, by
\begin{align*}
\underline C_{\tau}(\mathcal D)=\argmin_{C \in \mathcal D}\sigma_{\tau}^2(C)
\quad
\overline C_{\tau}(\mathcal D)=\argmax_{C \in \mathcal D}\sigma_{\tau}^2(C).
\end{align*}
\end{definition}

Properties of the best and worst asymptotic variances of $\tau$ are summarized as follows.

\begin{proposition}[Best and worst asymptotic variances of Kendall's tau]\label{prop:asymptotic:variance:kendall:tau}\hspace{2mm}
Let $\mathcal D \subseteq C_2$ and $\beta \in \mathcal H \subseteq \mathcal G_4$.
\begin{enumerate}
\item\label{item:attain:lower:bound:kendall:asymptotic:variance}
If $M \in \mathcal D$ or $W \in \mathcal D$, then $\underline \sigma_{\tau}^2(\mathcal D)=\underline\sigma_{\ast}^2(\mathcal H,\mathcal D)=0$ and $\underline C_{\tau}(\mathcal D)=\{M,W\}\cap \mathcal D$.
\item\label{item:attain:upper:bound:kendall:asymptotic:variance}
If $\Pi \in \mathcal D$, then $\overline \sigma_{\tau}^2(\mathcal D)=\overline\sigma_{\ast}^2(\mathcal H,\mathcal D)=1$ and $\Pi \in \overline C_{\tau}(\mathcal D)$.
\item \label{itemattain:upper:bound:kendall:asymptotic:variance:Frechet:copulas}
If $\mathcal D= \mathcal C^{\text{F}}$, then $\overline \sigma_{\tau}^2(\mathcal D)=1$ and
$\overline C_{\tau}(\mathcal D)= \left\{ p (M+W)/2 + (1-p)\Pi: p \in [0,1]\right\}$.
\end{enumerate}
\end{proposition}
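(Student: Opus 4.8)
The plan is to reduce every part to the explicit formula $\sigma_\tau^2(C)=1-\tau^2(C)$ of Proposition~\ref{prop:basic:properties:tau:asymptotic:variance}, together with the bounds and attainment characterizations established there, and then to match the resulting optimal values against those obtained for $\beta$ in Proposition~\ref{prop:optimality:blomqvist:beta}. For Part~\ref{item:attain:lower:bound:kendall:asymptotic:variance}, I would use that $0\le\sigma_\tau^2(C)$ for all $C$, with equality exactly at $M$ and $W$ (Proposition~\ref{prop:basic:properties:tau:asymptotic:variance} Parts~\ref{item:bounds:kendall:asymptotic:variance} and~\ref{item:lower:bound:kendall:asymptotic:variance}); hence the hypothesis $M\in\mathcal D$ or $W\in\mathcal D$ forces $\underline\sigma_\tau^2(\mathcal D)=0$ and pins the attaining set down to $\underline C_\tau(\mathcal D)=\{M,W\}\cap\mathcal D$. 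To identify this with $\underline\sigma_\ast^2(\mathcal H,\mathcal D)$, I recall that $M$ is TPI and $W$ is TNI (the remark after Definition~\ref{def:balanced:copulas}), so $\mathcal D$ contains a TPI or TNI copula; since $\beta\in\mathcal H$, Proposition~\ref{prop:optimality:blomqvist:beta} Part~\ref{item:optimality:blomqvist:beta:lower} then gives $\underline\sigma_\ast^2(\mathcal H,\mathcal D)=0$, matching $\underline\sigma_\tau^2(\mathcal D)$.

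For Part~\ref{item:attain:upper:bound:kendall:asymptotic:variance} the same template applies with the roles reversed: $\sigma_\tau^2(C)\le1$ with $\sigma_\tau^2(\Pi)=1-\tau^2(\Pi)=1$ (Part~\ref{item:upper:bound:kendall:asymptotic:variance} of Proposition~\ref{prop:basic:properties:tau:asymptotic:variance}, as $\tau(\Pi)=0$), so $\Pi\in\mathcal D$ yields $\overline\sigma_\tau^2(\mathcal D)=1$ and $\Pi\in\overline C_\tau(\mathcal D)$, while Proposition~\ref{prop:optimality:blomqvist:beta} Part~\ref{item:optimality:blomqvist:beta:upper} supplies $\overline\sigma_\ast^2(\mathcal H,\mathcal D)=1$. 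The only genuinely new work is in Part~\ref{itemattain:upper:bound:kendall:asymptotic:variance:Frechet:copulas}, where I must characterize \emph{all} Fr\'echet maximizers rather than merely exhibit one. Since $\Pi\in\mathcal C^{\operatorname{F}}$, Part~\ref{item:attain:upper:bound:kendall:asymptotic:variance} already gives $\overline\sigma_\tau^2(\mathcal C^{\operatorname{F}})=1$, and by Part~\ref{item:upper:bound:kendall:asymptotic:variance} of Proposition~\ref{prop:basic:properties:tau:asymptotic:variance} the maximizers are exactly the $C\in\mathcal C^{\operatorname{F}}$ with $\tau(C)=0$. So I need an explicit expression for $\tau$ on $\mathcal C^{\operatorname{F}}$.

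The main obstacle here is that $\tau$ is \emph{not} linear in the mixture weights, so one cannot simply reuse the linearity exploited for $\kappa_G$. I would circumvent this by introducing the concordance function $Q(C_1,C_2)=4\int_{[0,1]^2}C_2\,\rd C_1-1$, which is symmetric; the bilinearity of $(C_1,C_2)\mapsto\int_{[0,1]^2}C_2\,\rd C_1$ combined with $p_M+p_\Pi+p_W=1$ then yields the quadratic identity $\tau(C_{\bp}^{\operatorname{F}})=\sum_{i,j}p_ip_j\,Q(C_i,C_j)$ over $C_i,C_j\in\{M,\Pi,W\}$. Computing the six pairwise values by integrating along the diagonal and anti-diagonal gives $Q(M,M)=1$, $Q(W,W)=-1$, $Q(\Pi,\Pi)=0$, $Q(M,\Pi)=1/3$, $Q(W,\Pi)=-1/3$ and $Q(M,W)=0$, from which I expect $\tau(C_{\bp}^{\operatorname{F}})=(p_M-p_W)\left(1-\tfrac{1}{3}p_\Pi\right)$. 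Because $1-\tfrac{1}{3}p_\Pi>0$ throughout $\Delta_3$, this vanishes precisely when $p_M=p_W$; writing $p=2p_M\in[0,1]$ reparametrizes exactly these copulas as $p\,\tfrac{M+W}{2}+(1-p)\Pi$, which is the claimed set $\overline C_\tau(\mathcal C^{\operatorname{F}})$ and completes the proof.
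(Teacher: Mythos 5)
Your proposal is correct, and for Parts~\ref{item:attain:lower:bound:kendall:asymptotic:variance} and~\ref{item:attain:upper:bound:kendall:asymptotic:variance} it is exactly the paper's argument: combine the bounds and attainment characterizations of Proposition~\ref{prop:basic:properties:tau:asymptotic:variance} with Proposition~\ref{prop:optimality:blomqvist:beta} (via $M$ being TPI, $W$ being TNI, and $\beta\in\mathcal H$). The one genuine divergence is in Part~\ref{itemattain:upper:bound:kendall:asymptotic:variance:Frechet:copulas}: the paper simply quotes the closed form $\tau(C_{\bp}^{\operatorname{F}})=(p_M-p_W)(p_M+p_W+2)/3$ from \citet[Example~5.3]{nelsen2006introduction}, whereas you rederive it through the concordance function $Q(C_1,C_2)=4\int_{[0,1]^2}C_2\,\rd C_1-1$. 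Your derivation is sound: bilinearity of $(C_1,C_2)\mapsto\int_{[0,1]^2}C_2\,\rd C_1$ together with $\sum_{i,j}p_ip_j=1$ (which absorbs the $-1$) gives the quadratic identity $\tau(C_{\bp}^{\operatorname{F}})=\sum_{i,j}p_ip_j\,Q(C_i,C_j)$, your six pairwise values are all correct (symmetry of $Q$ justifying the one-sided diagonal/anti-diagonal computations), and your expression $(p_M-p_W)\bigl(1-\tfrac{1}{3}p_\Pi\bigr)$ agrees with Nelsen's since $p_M+p_W+2=3-p_\Pi$; both yield $\tau=0$ if and only if $p_M=p_W$, and the reparametrization $p=2p_M$ produces exactly the claimed family $p\,\tfrac{M+W}{2}+(1-p)\Pi$. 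What your route buys is self-containedness and a transparent structural explanation of why $\tau$ is not linear in mixture weights (it is a quadratic form in $\bp$, consistent with the paper's remark that $\tau$ is not a transformed rank correlation); what the paper's citation buys is brevity. Either way the logical skeleton of Part~\ref{itemattain:upper:bound:kendall:asymptotic:variance:Frechet:copulas} --- reduce maximizers to $\{\tau=0\}$ via Proposition~\ref{prop:basic:properties:tau:asymptotic:variance} Part~\ref{item:upper:bound:kendall:asymptotic:variance}, then solve on $\Delta_3$ --- is the same.
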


Proposition~\ref{prop:asymptotic:variance:kendall:tau}
states that Kendall's tau attains the optimal best and worst asymptotic variances of transformed rank correlations, which are also attained by Blomqvist's beta.
Taking into account the drawback of Blomqvist's beta that it depends only on the local value $C(1/2,1/2)$ of a copula $C$, Kendall's tau can be a good alternative of Blomqvist's beta in terms of best and worst asymptotic variances.

\begin{remark}[Comparability of $\kappa_G$ and $\tau$]
\label{remark:comparability:kappa:g:tau}
Since Representation~\eqref{eq:kendall:representation:correlation} of Kendall's tau in terms of Pearson's correlation coefficient depends on two independent copies $(U,V)\sim C$ and $(\tilde U,\tilde V)\sim C$, the canonical estimator $\hat \tau$ requires twice more samples from $C$ than $\hat \kappa$ does.
Therefore, if the estimators $\hat \tau$ and $\hat \kappa_G$ are compared based on their actual variances (instead of their asymptotic variances), then $\Var(\hat \tau)=\sigma_{\tau}^2(C)/n$ should be multiplied by $2$ to be compared with $\Var(\hat \kappa_G)=\sigma_G^2(C)/n$.
With this modification, Kendall's tau still attains the optimal best asymptotic variance $\underline\sigma_{\ast}^2(\mathcal G_4,\mathcal D)=0$ since $2 \underline \sigma_{\tau}^2(\mathcal D)=0$.
On the other hand, Kendall's tau fails to attain the optimal worst asymptotic variance $\overline\sigma_{\ast}^2(\mathcal G_4,\mathcal D)=1$ since
$2 \overline \sigma_{\tau}^2(\mathcal D)=2>1$.
\end{remark}

\section{Simulation study}\label{sec:simulation:study}

In this section, we conduct a simulation study to compare the asymptotic variance $\sigma_G^2(C)$ for various copulas $C \in \mathcal C_2$ and concordance-inducing distributions $G\in \mathcal G_4$.
Not only Spearman's rho $\rho_{\text{S}}$, Blomqvist's beta $\beta$ and van der Waerden's coefficient $\zeta$, we also consider $G$-transformed rank correlations with $G$ given by a Student $t$ distribution $t(\nu)$ with $\nu=10$ degrees of freedom and a beta distribution with shape parameters $(0.5,0.5)$. 
Note that both are radially symmetric and have finite fourth moments, and thus belong to $\mathcal G_4$ after standardization (mean zero and variance one).
The Beta$(0.5,0.5)$ concordance-inducing distribution has a different shape from the others since it puts an increasing probability mass as locations farther away from the center $1/2$.
Kendall's tau is also considered for comparison.
Besides standardized concordance-inducing distributions, we also consider optimally shifted ones as introduced in Section~\ref{subsec:optimal:location:shift}.
As underlying copulas, we consider Gaussian $C_{\rho}^{\text{Ga}}$, Student $t$ $C_{\rho,\nu}^{t}$ and Clayton copulas $C_{\theta}^{\text{Cl}}$ where $\rho \in [-1,1]$ is a correlation parameter, $\nu>0$ is a degree of freedom and $\theta\geq -1$ is a shape parameter.
The experiment consists of the following three steps.

\begin{enumerate}\setlength{\itemsep}{-0mm}
\item\label{item:step:set:parameters:copulas} Set $\rho = -0.99 +1.98 k / 49$ for $k\in \{ 0,1,\dots,49\}$, $\nu=5$ and $\theta = 2\rho/(1-\rho)$ (which yields $\tau(C_{\theta}^{\text{Cl}})=\rho$) in $C=C_{\rho}^{\text{Ga}}$, $C_{\rho,\nu}^{t}$ and $C_{\theta}^{\text{Cl}}$.
\item\label{item:step:simulate:sample:C} For each copula $C$ in Step~\ref{item:step:set:parameters:copulas}, simulate $(U_1,V_1),\dots,(U_n,V_n)\iidsim C$ with $n=10^5$.
\item\label{item:step:compute:estimates:asymptotic:variances} Based on the samples generated in Step~\ref{item:step:simulate:sample:C}, estimate $\sigma_G^2(C)$ and $\sigma_{\tau}^2(C)$ by the sample variances of $G^\i(U_i)G^\i(V_i)$, $i=1,\dots,n$, and of $g(U_i,U_{i+n/2})g(V_i,V_{i+n/2})$, $i=1,\dots,n/2$, where $G$ is a standardized, and optimally shifted uniform, Beta$(0.5,0.5)$, normal, $t(10)$ and symmetric Bernoulli distribution function, and $g$ is as defined in~\eqref{eq:kendall:representation:correlation}.
\end{enumerate}

\begin{figure}[t]
  \centering
  \vspace{0mm}
  \includegraphics[width=14 cm]{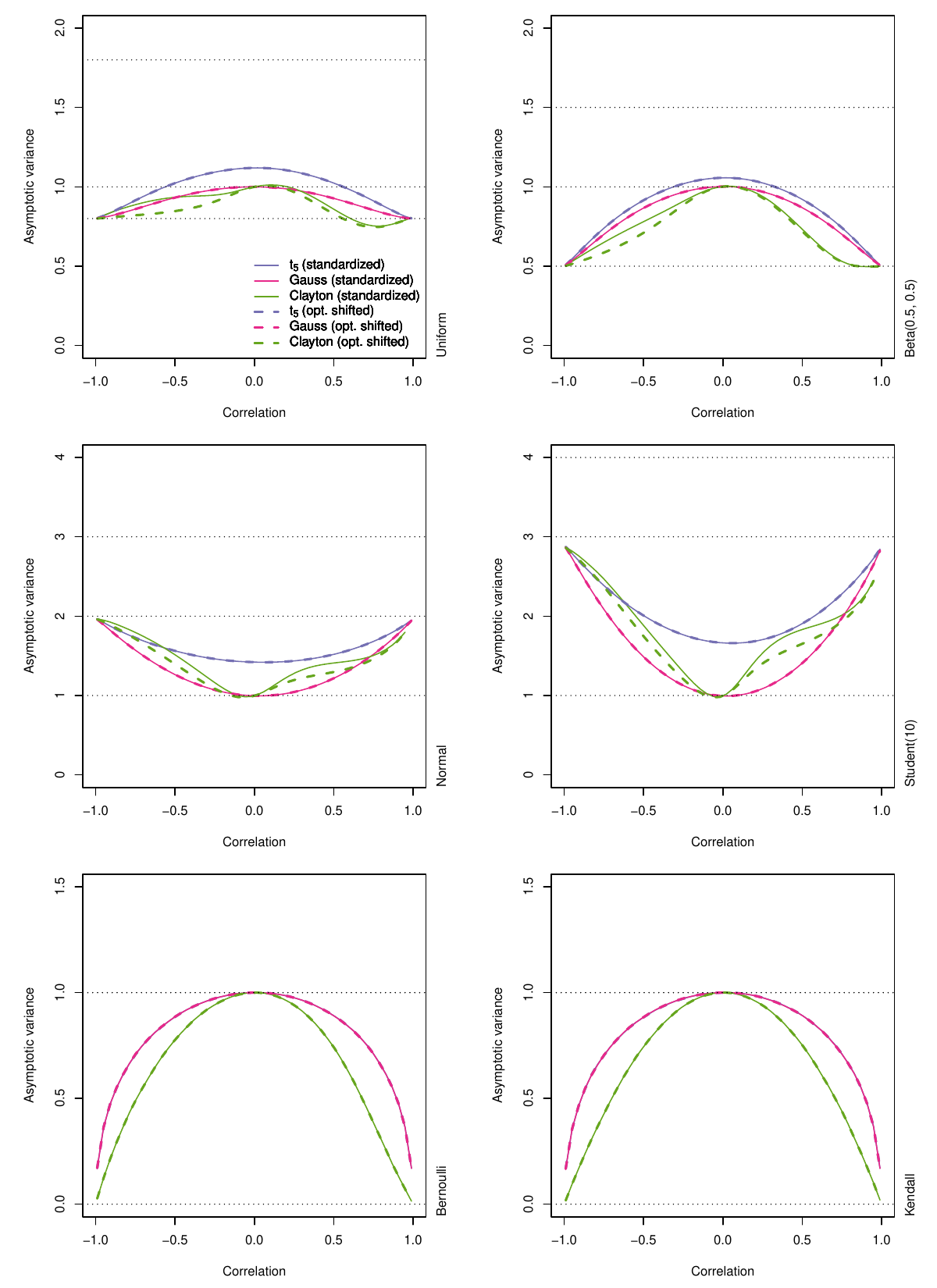}
  \vspace{0mm}
  \caption{Estimates of asymptotic variances $\sigma_G^2(C)$ and $\sigma_{\tau}^2(C)$ against correlation parameters $\rho \in [-0.99,0.99]$ of $C=C_{\rho}^{\text{Ga}}$ (red), $C_{\rho,\nu}^{t}$ (blue) with $\nu=5$ and $C_{\theta}^{\text{Cl}}$ (green) with $\theta = 2\rho/(1-\rho)$ for $G$-transformed rank correlation coefficients $\kappa_G$ (all except bottom-right) and Kendall's tau $\tau$ (bottom-right).
  The concordance-inducing distribution $G$ is set to be standardized (solid lines) and optimally shifted (dashed lines) uniform, Beta$(0.5,0.5)$, normal, $t(10)$ and symmetric Bernoulli distribution.
  The black dotted lines represent $y=1$, $V(G)$ and $1+V(G)$ with $V(\beta)=0$, $V(\rho_{\text{S}})=0.8$, $V(\zeta)=2$, $V(t(10))=3$, $V(\text{Beta(0.5,0.5)})=0.5$ and $V(\tau)=0$.}
  \label{fig:asymptotic:variances}
\end{figure}

The estimates of $\sigma_G^2(C)$ and $\sigma_{\tau}^2(C)$ computed in Step~\ref{item:step:compute:estimates:asymptotic:variances} are plotted in Fig.~\ref{fig:asymptotic:variances}.
Observations from these plots are summarized as follows.
For $G\in\mathcal G$, we denote by $V(G)$ (or $V(\kappa_G)$) the variance of $X^2$ where $X$ follows the standarzied distribution of $G$ so that $\E[X]=0$ and $\Var(X)=1$.
In addition, we write $V(\tau)=\Var(g(U,\tilde U))$ for $U,\, \tilde U \iidsim \Unif(0,1)$.

\emph{Symmetry}:
For all copulas $C$, the curves of $\sigma_G^2(C)$ and $\sigma_{\tau}^2(C)$ against the correlation parameter $\rho$ were almost symmetric around $\rho=0$.
For $C=C_{\rho}^{\text{Ga}}$ and $C_{\rho,\nu}^{t}$, the symmetry is a consequence from Proposition~\ref{prop:basic:properties:asymptotic:variance} Part~\ref{item:reflection:invariance:asymptotic:variance} since
$C_{-\rho}^{\text{Ga}}=\nu_1(C_{\rho}^{\text{Ga}})$ and $C_{-\rho,\nu}^{t}=\nu_1(C_{\rho,\nu}^{t})$.
This argument does not apply to Clayton copulas, and thus the curves $\rho \mapsto \sigma_G^2(C_{2\rho/(1-\rho)}^{\text{Cl}})$ and $\rho \mapsto \sigma_{\tau}^2(C_{2\rho/(1-\rho)}^{\text{Cl}})$ are not precisely symmetric.

\emph{Convexity and concavity}:
At least in this experiment, the curves of $\sigma_G^2(C)$ and $\sigma_{\tau}^2(C)$ are convex when $V(G)>1$ (which holds if $G$ is normal or $t(10)$), and concave when $V(G)<1$ (which holds if $G$ is Bernoulli, uniform and Beta$(0.5,0.5)$, and if Kendall's tau is considered).
This observation is consistent with~\eqref{eq:optimal:asymptotic:variance:normal:variance:mixture:variance:X:squared}, and with the asymptotic variances obtained in Remarks~\ref{remark:asymptotic:variances:Blomqvist:beta:elliptical} and~\ref{remark:explicit:forms:asymptotic:variances:Kendall:tau}.

\emph{Best and worst asymptotic variances}:
For all cases of $C=C_{\rho}^{\text{Ga}}$, $C_{\rho,\nu}^{t}$ and $C_{\theta}^{\text{Cl}}$, the best and worst asymptotic variances were approximately $1 \wedge V(G)$ and $1\vee V(G)$, respectively.
When $V(G)>1$ (normal or $t(10)$), the lower bound $\sigma_G^2(C)=1$ was attained at $\rho=0$ and the upper bound $\sigma_G^2(C)=V(G)$ was attained at $\rho=\pm 1$.
When $V(G)<1$ (Bernoulli, uniform, Beta$(0.5,0.5)$ and Kendall), the lower bound $\sigma_G^2(C)=V(G)$ was attained at $\rho=\pm 1$ and the upper bound $\sigma_G^2(C)=1$ was attained at $\rho=0$.
Note that, for $C=C_{\rho,\nu}^{t}$, the curve was slightly above these lower and upper bounds at $\rho=0$ since $C_{0,\nu}^{t}\neq \Pi$.
Since we only consider specific classes of copulas, the global upper bound $1+V(G)$ as presented in Proposition~\ref{prop:basic:properties:asymptotic:variance} Part~\ref{item:bounds:asymptotic:variance} was not attained except in the cases of Blomqvist's beta and Kendall's tau where $V(\beta)=V(\tau)=0$.

\emph{Choice of $G$; normal or Student, and uniform or Beta}:
As seen for the best and worst asymptotic variances, the variance $V(G)$ may play important roles in determining the best and worst asymptotic variances.
As theoretically indicated, concordance-inducing distributions with smaller $V(G)$ are more preferable at least in terms of asymptotic variance.
Therefore, the normal concordance-inducing distribution is more preferable to $t(10)$ since $V(\zeta)=2 < 3 = V(t(10))$.
In fact, for all copulas considered, $\zeta$ had a smaller asymptotic variance than $\kappa_{t(10)}$.
Interestingly, $\kappa_{\text{Beta}(0.5,0.5)}$ typically had smaller asymptotic variance than $\rho_{\text{S}}$ as expected from $V(\text{Beta})=0.5 < 0.8 = V(\rho_{\text{S}})$.

\emph{Blomqvist's beta and Kendall's tau}:
As indicated in Remark~\ref{remark:explicit:forms:asymptotic:variances:Kendall:tau}, the curves of $\sigma_\beta^2(C)$ and $\sigma_\tau^2(C)$ seem to coincide when $C=C_{\rho}^{\text{Ga}}$ and $C_{\rho,\nu}^{t}$.
On the other hand, $\sigma_{\beta}^2(C_{\theta}^{\text{Cl}})$ and $\sigma_{\tau}^2(C_{\theta}^{\text{Cl}})$ are in general different since
$\sigma_{\beta}^2(C_{\theta}^{\text{Cl}})=1-\beta^2(C_{\theta}^{\text{Cl}})$ and $\sigma_{\tau}^2(C_{\theta}^{\text{Cl}})=1-\tau^2(C_{\theta}^{\text{Cl}})$ by Proposition~\ref{prop:asymptotic:variances:beta} Part~\ref{item:explicit:form:asymptotic:variances:beta} and Proposition~\ref{prop:basic:properties:tau:asymptotic:variance} Part~\ref{item:explicit:form:tau:asymptotic:variance}, but $\beta(C_{\theta}^{\text{Cl}})=4(2^{\theta+1}-1)^{-1/\theta}-1$ and $\tau(C_{\theta}^{\text{Cl}})=\theta/(\theta+2)$.

\emph{Strength and the model of dependence}:
For any concordance-inducing distribution, the difference of $\sigma_G^2(C)$ among different copulas $C=C_{\rho}^{\text{Ga}}$, $C_{\rho,\nu}^{t}$ and $C_{\theta}^{\text{Cl}}$ was typically smaller than the difference of $\sigma_G^2(C)$ among different levels of dependence, which is controlled by $\rho$ in this experiment.
Therefore, one might say that the choice of $C$ is less influential on the asymptotic variance $\sigma_G^2(C)$ compared with the choice of concordance-inducing distribution and the strength of dependence.

\emph{Effect of optimal shifts}:
As theoretically indicated in Proposition~\ref{prop:sufficient:condition:optimal:shift:zero}, Proposition~\ref{prop:asymptotic:variances:beta} Part~\ref{item:optimal:shift:asymptotic:variances:beta} and Proposition~\ref{prop:basic:properties:tau:asymptotic:variance} Part~\ref{item:optimal:shift:tau:asymptotic:variance},
the solid and dotted curves of asymptotic variances overlapped when $C=C_{\rho}^{\text{Ga}}$ or $C_{\rho,\nu}^{t}$, and when $\beta$ and $\tau$ are considered.
For other cases, the optimal shift reduced the asymptotic variance.
However, even when the copula is $C_{\theta}^{\text{Cl}}$, only a small reduction by the optimal shift was observed in this experiment.

\section{Concluding remarks and discussion}\label{sec:concluding:remark}

To answer the question which measure of concordance is best to use, we proposed a comparison of $G$-transformed rank correlations $\kappa_G$ in terms of their best and worst asymptotic variances $\sigma_G^2(C)$ on a set of copulas $\mathcal D$.
When $\mathcal D$ is a set of fundamental copulas or Fr\'echet copulas, we showed that the best and worst asymptotic variances are increasing functions of $\Var_G(X^2)$, $X\sim G$, which leads to the comparison criterion that concordance-inducing distributions $G$ having smaller $\Var_G(X^2)$ are more preferable.
Since $\Var_G(X^2)$ attains its minimum $\Var_G(X^2)=0$ if and only if $G$ is a standardized symmetric Bernoulli distribution,
we proved that Blomqvist's beta $\beta$ uniquely attains the optimal best and worst asymptotic variances among all transformed rank correlations under certain conditions on $\mathcal D$.
Considering the drawback of $\beta$ that it depends only on the local value $C(1/2,1/2)$ of a copula $C$, we also compared $G$-transformed rank correlations with Kendall's tau $\tau$.
Based on the representation of $\tau$ in terms of Pearson's linear correlation coefficient, we found that $\tau$ also attains the optimal best and worst asymptotic variances that $\beta$ does, although $\tau$ is not a transformed rank correlation.
Since the estimator of $\tau$ requires twice more samples than that of $\kappa_G$ does, some optimality of $\tau$ is violated if this required sample size is taken into account.
In a simulation study, we investigated the curve of the asymptotic variance of $\kappa_G$ and $\tau$ against the strength of dependence of the underlying copula.
We observed that the curve of $\sigma_G^2(C)$ was typically symmetric and convex or concave with the best and worst asymptotic variances give by $1\wedge \Var_G(X^2)$ and $1\vee \Var_G(X^2)$ depending on $\Var_G(X^2)>1$ or $\Var_G(X^2)<1$.
These observations support the criterion that concordance-inducing distributions $G$ with smaller $\Var_G(X^2)$ are more preferable.
Consequently, heavy-tailed concordance-inducing distributions, such as Student $t$ distributions with small degrees of freedom, are not recommended in comparison to the normal distribution, which leads to van der Waerden's coefficient $\zeta$.
In addition, we found that the beta distribution-based transformed rank correlations can be good alternatives to Spearman's rho $\rho_{\text{S}}$.

Below we list limitations, discussion and future research on this work.
\begin{itemize}
\item Although Blomqvist's beta $\beta$ can be
optimal in terms of stability of its statistical estimation, this measure has some limitations. 
For example, $|\beta(C)|=1$ does not imply $C =M,\, W$; see~\cite[Proposition~1]{koike2022matrix}.
\item   Given the
limitations of Fr\'echet copulas in practice, it may be interesting to explore
optimal concordance-inducing distributions under more practical choices of sets
of the underlying copulas $\mathcal D$, such as a set of parametric copulas or a
ball of copulas around a given reference copula.
\item In our work we assumed for simplicity that i.i.d.\ samples from the underlying
copula are available. However, this may be unrealistic in practice, and it is
therefore of interest whether and how results in this paper change if pseudo-samples from the underlying copula are used in the comparison of measures of concordance in terms of their asymptotic variances.
\item
Besides Kendall's tau, there are still important measures of concordance, such as Gini's gamma, which are not
transformed rank correlations. It is thus of interest to
study a broader framework that allows one to include such measures of
concordance in comparison.
In particular, for a comparison, it may be better to consider other
estimators of Kendall's tau than the canonical one considered in this paper due to the required
sample size.
\end{itemize}

\section{Proofs}\label{sec:proofs}


\begin{proof}[\bf Proof of Proposition~\ref{prop:basic:properties:asymptotic:variance}]
\hspace{0mm}\\
\noindent \ref{item:covariance:formula:asymptotic:variance})
For $(U,V)\sim C$, write $X=G^\i(U)$ and $Y=G^\i(V)$.
Since $G \in \mathcal G_4$, we have that 
\begin{align*}
\Cov(X^2,Y^2)=\E[(XY)^2]-\E[X^2]\E[Y^2]=\E[(XY)^2]-1,\quad \Cov(X,Y)=\E[XY]-\E[X]\E[Y]=\E[XY].
\end{align*}
Therefore,
\begin{align*}
\sigma_G^2(C)=\Var(XY)=\E[(XY)^2]-\E[XY]^2=\Cov(X^2,Y^2)+1 - \Cov(X,Y)^2.
\end{align*}

\noindent \ref{item:fundamental:copulas:asymptotic:variance})
If $(U,V)\sim \Pi$, then $(X^2,Y^2)$ and $(X,Y)$ are both independent random vectors, and thus $\Cov(X^2,Y^2)=\Cov(X,Y)=0$.
Therefore $\sigma_G^2(\Pi)=0 + 1- 1^2 = 1$ by~\eqref{eq:covariance:formula:asymptotic:variance}.
If $C=M$, then we have that $\Cov(X,Y)=\kappa_G(M)=1$ and that $(X^2,Y^2)\deq(G^\i(U)^2,G^\i(U)^2)$ for $U\sim \Unif(0,1)$.
When $C=W$, we have that $\Cov(X,Y)=\kappa_G(W)=-1$ and that
\begin{align*}
(X^2,Y^2)\deq(G^\i(U)^2,G^\i(1-U)^2)=(G^\i(U)^2,(-G^\i(U))^2)=(G^\i(U)^2,G^\i(U)^2),\quad U\sim \Unif(0,1),
\end{align*}
by radial symmetry of $G \in \mathcal G_4$.
Therefore, in either case, we have $\rho(X^2,Y^2)=1$ and thus
\begin{align*}
\Cov(X^2,Y^2)= \rho(X^2,Y^2)\operatorname{SD}_G(X^2)\operatorname{SD}_G(Y^2)=\Var_G(X^2).
\end{align*}
Therefore, by~\eqref{eq:covariance:formula:asymptotic:variance}, we have that
$$\sigma_G^2(M) =\Var_G(X^2)+1-1^2=\Var_G(X^2),\quad \sigma_G^2(W) =\Var_G(X^2)+1-(-1)^2=\Var_G(X^2).$$

\noindent \ref{item:bounds:asymptotic:variance})
By Cauchy--Schwarz inequality, it holds that $\Cov(X^2,Y^2)\leq \operatorname{SD}(X^2)\operatorname{SD}(Y^2)=\Var(X^2)$.
Since $\Cov(X,Y)^2\geq 0$, the desired inequalities hold by~\eqref{eq:covariance:formula:asymptotic:variance}.
When $C=(M+W)/2$, we have that
\begin{align*}
(X,Y)\deq B  (G^\i(U),G^\i(U))+ (1-B)(G^\i(U),G^\i(1-U)),
\end{align*}
where $B \sim \Bern(1/2)$ and $U \sim \Unif(0,1)$ independent of $B$.
Therefore,
\begin{align*}
\Cov(X,Y)=\frac{1}{2}\Cov(G^\i(U),G^\i(U)) + \frac{1}{2}\Cov(G^\i(U),G^\i(1-U))=\frac{1}{2}(1)+\frac{1}{2}(-1)=0,
\end{align*}
and $\Cov(X^2,Y^2)=\Cov(G^\i(U)^2,G^\i(U)^2)=\Var_G(X^2)$.
Using~\eqref{eq:covariance:formula:asymptotic:variance}, we have that $$\sigma_G\left((M+W)/2\right)=\Var_G(X^2)+1-0^2=\Var_G(X^2)+1.$$

\noindent \ref{item:boundedness:asymptotic:variance})
Since the fourth moment of $G$ is finite, we have that $\Var_G(X^2)<\infty$.
Therefore $\sigma_G^2(C)<\infty$ by~\ref{item:bounds:asymptotic:variance}).

\noindent \ref{item:reflection:invariance:asymptotic:variance})
The desired result follows from~\eqref{eq:covariance:formula:asymptotic:variance} since radial symmetry of $G \in \mathcal G$ implies that
\begin{align}\label{eq:almost:sure:equalities:radial:symmetry}
(G^\i(U),G^\i(V))&\aseq
(-G^\i(U_{\nu_1}),G^\i(V_{\nu_1}))\aseq
(G^\i(U_{\nu_2}),-G^\i(V_{\nu_2}))
\aseq(-G^\i(U_{\nu_1\circ \nu_2}),-G^\i(V_{\nu_1 \circ \nu_2})).
\end{align}
\end{proof}

\begin{proof}[\bf Proof of Proposition~\ref{prop:concavity:sigma:G:C:asymptotic:variance}]

For $C, C' \in \mathcal C_2$ and $p \in [0,1]$, define the random vector $(\tilde X,\tilde Y)=(G^\i(\tilde U),G^\i(\tilde V))$ where $(\tilde U,\tilde V)= B (U,V) + (1-B)(U',V')$, $(U,V)\sim C$, $(U',V')\sim C'$ and $B\sim \Bern(p)$ is independent of $(U,V)$ and $(U',V')$.
Then $(\tilde U,\tilde V)\sim\tilde C_p$ where $\tilde C_p=pC + (1-p)C'$.
Moreover, we have that
\begin{align*}
(\tilde X,\tilde Y)=(G^\i(\tilde U),G^\i(\tilde V))=(G^\i(BU+(1-B)U'),G^\i(BV+(1-B)V'))=B(X,Y)+(1-B)(X',Y'),
\end{align*}
where  $(X,Y)=(G^\i(U),G^\i(V))$ and $(X',Y')=(G^\i(U'),G^\i(V'))$.
From this representation, it holds that
\begin{align*}
\Cov(\tilde X^2,\tilde Y^2)=p\Cov(X^2,Y^2)+(1-p)\Cov({X'}^2,{Y'}^2),\quad
\Cov(\tilde X,\tilde Y)=p\Cov(X,Y)+(1-p)\Cov(X',Y').
\end{align*}
Therefore, we have that
\begin{align*}
\sigma_G^2(\tilde C_p)& =\Var(\tilde X \tilde Y)= \Cov(\tilde X^2,\tilde Y^2)+ 1 -\Cov(\tilde X,\tilde Y)^2\\
&= p\Cov(X^2,Y^2)+(1-p)\Cov({X'}^2,{Y'}^2) + 1 - (p\Cov(X,Y)+(1-p)\Cov(X',Y'))^2\\
&\geq  p\Cov(X^2,Y^2)+(1-p)\Cov({X'}^2,{Y'}^2) + 1 - p\Cov(X,Y)^2-(1-p)\Cov(X',Y')^2\\
&= p \Var(XY)+(1-p)\Var(X'Y')=p\sigma_G^2(C)+(1-p)\sigma_G^2(C'),
\end{align*}
where the inequality in the third line holds since
\begin{align*}
p\Cov(X,Y)^2+(1-p)\Cov(X',Y')^2 &- (p\Cov(X,Y)+(1-p)\Cov(X',Y'))^2\\
&= p(1-p)(\Cov(X,Y)- \Cov(X',Y'))^2 \geq0.
\end{align*}
Therefore, the map $C \mapsto \sigma_G^2(C)$ is concave.
When $C, C' \in \mathcal C_G(k)$ for some $k \in [-1,1]$, we have $\Cov(X,Y)=\Cov(X',Y')$, and thus equality holds in the inequality above.
Consequently, the map $C \mapsto \sigma_G^2(C)$ is linear.
\end{proof}

\begin{proof}[\bf Proof of Proposition~\ref{prop:sufficient:condition:optimal:shift:zero}]

By definition of $\mu_{\ast}$, it suffices to consider the case when $\Var(X_0+Y_0)>0$ for $X_0 = G_0^\i(U)$ and $Y_0 = G_0^\i(V)$ with $(U,V)\sim C$.

Since $\E[X_0+Y_0]=\E[X_0]+\E[Y_0]=0$, we have that 
\begin{align*}
\Cov(X_0Y_0,X_0+Y_0)=\E[X_0Y_0(X_0+Y_0)]-\E[X_0Y_0]\E[X_0+Y_0]=\E[X_0Y_0(X_0+Y_0)].
\end{align*}
Therefore, it suffices to show that $\E[X_0Y_0(X_0+Y_0)]=0$ when $C$ is radially symmetric.

When $C$ is radially symmetric, we have that $(U,V)\deq (U_{\nu_1\circ \nu_2},$ $V_{\nu_1\circ \nu_2})$ and $(U_{\nu_1},V_{\nu_1})\deq (U_{\nu_2},V_{\nu_2})$ for $(U,V)\sim C$.
Together with the identity
\begin{align*}
1&=\bone_{\{U>1/2,\,V>1/2\}} + \bone_{\{U\leq 1/2,\,V>1/2\}} + \bone_{\{U>1/2,\,V\leq 1/2\}} + \bone_{\{U\leq 1/2,\,V\leq 1/2\}}\\
&= \bone_{\{U>1/2,\,V>1/2\}} + \bone_{\{U_{\nu_1}>1/2,\,V_{\nu_1}>1/2\}} + \bone_{\{U_{\nu_2}>1/2,\,V_{\nu_2}>1/2\}} + \bone_{\{U_{\nu_1\circ \nu_2}>1/2,\,V_{\nu_1\circ \nu_2}>1/2\}},
\end{align*}
we have, by~\eqref{eq:almost:sure:equalities:radial:symmetry}, that
\begin{align*}
\E[X_0Y_0  (X_0+Y_0)]
  &= \sum_{\varphi  \in \{\iota, \nu_1,\nu_2, \nu_1\circ \nu_2\}}\E[\bone_{\{U_{\varphi}>1/2,V_{\varphi}>1/2\}}\,G_0^\i(U)G_0^\i(V)\,( G_0^\i(U)+ G_0^\i(V))]\\
&=  \E[\bone_{\{U>1/2,V>1/2\}}\,G_0^\i(U)G_0^\i(V)\,( G_0^\i(U)+ G_0^\i(V))]\\
& \hspace{5mm}- \E[\bone_{\{U_{\nu_1\circ \nu_2}>1/2,V_{\nu_1\circ \nu_2}>1/2\}}\,G_0^\i(U_{\nu_1\circ \nu_2})\,G_0^\i(V_{\nu_1\circ \nu_2})\,( G_0^\i(U_{\nu_1\circ \nu_2})+ G_0^\i(V_{\nu_1\circ \nu_2}))]\\
&\hspace{5mm}+ \E[\bone_{\{U_{\nu_1}>1/2,V_{\nu_1}>1/2\}}\,G_0^\i(U_{\nu_1})G_0^\i(V_{\nu_1})\,( G_0^\i(U_{\nu_1})-G_0^\i(V_{\nu_1}))]\\
&\hspace{5mm}- \E[\bone_{\{U_{\nu_2}>1/2,V_{\nu_2}>1/2\}}\,G_0^\i(U_{\nu_2})G_0^\i(V_{\nu_2})\,( G_0^\i(U_{\nu_2})- G_0^\i(V_{\nu_2}))]=0,
\end{align*}
which gives the desired result $\mu_{\ast}(G,C)=0$.
\end{proof}

\begin{proof}[\bf Proof of Proposition~\ref{prop:asymptotic:variance:normal:variance:mixture}]

Since normal variance mixtures are radially symmetric, the optimal asymptotic variance is obtained when $\mu^{\ast}=0$.
Since $\sigma_{G_{\mu^\ast}}^2(C)$ is invariant under scale transforms of $G_{\mu_{\ast}}$, we standardize $(X,Y)$ to apply the covariance formula~\eqref{eq:covariance:formula:asymptotic:variance}.
Namely, we set $\Sigma=P/\E[W]$ where $P$ is a correlation matrix with off-diagonal entry $\rho=\rho(X,Y)$.
In this setup, we have that
\begin{align*}
\Var(X)=\Var(Y)=\E[WZ_1^2]-\E\left[\sqrt{W}Z_1\right]^2=\E[W]\Var(Z_1)-0=\E[W]\,\frac{1}{\E[W]}=1.
\end{align*}
Since $\Cov(Z_1^2,Z_2^2)=2\rho^2/\E[W]^2$, the law of total covariance implies that
\begin{align*}
\Cov(X^2,Y^2)&=\E[\Cov(X^2,Y^2)\ |\ W)] + \Cov(\E[X^2\ |\ W],\ \E[Y^2\ |\ W]) \\
&= \E[W^2\Cov(Z_1^2,Z_2^2)] + \Cov(\E[Z_1^2]W,\E[Z_2^2]W)=\E[W^2]\Cov(Z_1^2,Z_2^2) + \Var(W)\Var(Z_1)\Var(Z_2)\\
&= 2\rho^2\frac{\E[W^2]}{\E[W]^2}+\frac{\Var(W)}{\E[W]^2}.
\end{align*}
Therefore, we have, by~\eqref{eq:covariance:formula:asymptotic:variance}, that
\begin{align*}
\sigma_{G_{\mu^\ast}}^2(C)&=\Var(XY)=\Cov(X^2,Y^2)+1-\Cov(X,Y)^2
=2\rho^2\frac{\E[W^2]}{\E[W]^2}+\frac{\Var(W)}{\E[W]^2}+ 1 - \rho^2,
\end{align*}
and thus the desired result follows.
\end{proof}

\begin{proof}[\bf Proof of Proposition~\ref{prop:worst:best:asymptotic:variances:Frechet:copulas}]

Fix $G \in \mathcal G_4$ and $C_{\bp}^{\operatorname{F}} \in \mathcal C^{\operatorname{F}}$ with $\bp=(p_M,p_{\Pi},p_W) \in \Delta_3$.
For $X=G^\i(U)$ and $Y=G^\i(V)$ with $(U,V)\sim C_{\bp}^{\operatorname{F}}$, we have that $\Cov(X^2,Y^2)=(p_M + p_W)\Var_G(X^2)$ and that $\Cov(X,Y)=p_M-p_W$.
Therefore, by~\eqref{eq:covariance:formula:asymptotic:variance}, it holds that
\begin{align*}
\sigma_G^2(C_{\bp}^{\operatorname{F}})=(p_M + p_W)v + 1 - (p_M-p_W)^2 =:f(p_M,p_W),
\end{align*}
where $v=\Var_G(X^2)$ for notational convenience.
Since the Hessian of $f$
\begin{align*}
H(p_M,p_W)=\begin{pmatrix}
\frac{\partial}{\partial p_M^2}f(p_M,p_W)& \frac{\partial}{\partial p_M p_W}f(p_M,p_W)
\\
\frac{\partial}{\partial p_W p_M}f(p_M,p_W) & \frac{\partial}{\partial p_W^2}f(p_M,p_W)\\
\end{pmatrix}
= \begin{pmatrix}
-2& 2\\
2& -2\\
\end{pmatrix},
\end{align*}
is nonpositive definite, $f$ is a concave function.

For $(p_M,p_W) \in \IR^2$ such that $p_M,p_W\geq 0$ and $p_M+p_W\leq 1$, consider the reparametrization $(p,0) + r(-1,1)=(p-r,r)$ where $0\leq r \leq p \leq 1$.
Then
\begin{align*}
f(p-r,r)=p v +1-(p-r)^2-r^2+2(p-r)r=-4\left(r-\frac{p}{2}\right)^2 + p v + 1,
\end{align*}
and thus $f$ represents a parabolic cylinder.

For a fixed $p \in [0,1]$, the function $r \mapsto f(p-r,r)$ has a maximum $\overline f(p)=p v+1$ when $r=p/2$, and a minimum $\underline f(p)=-p^2+p v+1$ when $r=0$ or $r=p$.
Since $v\geq 0$, the maximum of $f$ is given by $v+1$ with the maximum attained by $p=1$ when $v>0$, and by any $p\in [0,1]$ when $v=0$.
Therefore, we have that $\overline \sigma_G^2(\mathcal C^{\operatorname{F}})=v+1 = \sigma_G^2(C)$ with $C=(M+W)/2$ when $v>0$, and with $C=p \left(M+W\right)/2 + (1-p)\Pi$ for any $p \in [0,1]$ when $v=0$.
For the minimum of $f$, notice that the function $\underline f(p)=-p^2+p v+1$, $0\leq p \leq 1$, is a concave parabola, and thus the minimum of $\underline f(p)$ is attained at $p=0$ or $p=1$.
With $\underline f(0)=1$ and $\underline f(1)=v$, the minimum of $f$ and its attainers are given by $\underline \sigma_G^2(\mathcal C^{\operatorname{F}})=1\wedge v = \sigma_G^2(C)$ with $C=M$ or $W$ when $0\leq v < 1$, with $C=M$, $\Pi$ or $W$ when $v=1$ and with $C=\Pi$ when $v>1$.
\end{proof}

\begin{proof}[\bf Proof of Corollary~\ref{cor:preference;orders:popular:mocs}]
For $G\in\mathcal G$, denote by $V(G)$ (or $V(\kappa_G)$) the variance of $X^2$ where $X$ follows the standardized distribution of $G$ so that $\E[X]=0$ and $\Var(X)=1$.
Then the concordance-inducing distributions of $\rho_{\text{S}}$, $\beta$ and $\zeta$ are the uniform distribution on $\left(-\sqrt{3},\sqrt{3}\right)$, the symmetric Bernoulli distribution on $\{-1,1\}$ and the standard normal distribution $N(0,1)$.
Since
\begin{align*}
V(\rho_{\text{S}})=V(\Unif(0,1))=0.8,\quad
V(\beta)=V(\Bern(1/2))=0\quad\text{and}\quad
V(\zeta)=V(\N(0,1))=2,
\end{align*}
the result follows from Corollary~\ref{cor:optimal:concordance:inducing:functions:fundamental:frechet:copulas} Part~\ref{item:preference:order:funndamental:frechet:copuas}.
\end{proof}

\begin{proof}[\bf Proof of Proposition~\ref{prop:asymptotic:variances:beta}]
Let $G$ be the standardized symmetric Bernoulli distribution.

\noindent \ref{item:optimal:shift:asymptotic:variances:beta})
For $(X,Y)=(G^\i(U),G^\i(V))$ with $(U,V)\sim C$, it suffices to consider the case when $\Var(X+Y)>0$ by definition of $\mu_\ast$.
Since $G^\i(u)=2\bone_{\{u>1/2\}}-1$, $u \in (0,1)$, we have that
\begin{align*}
X+Y&=
\begin{cases}
2, & \text{ if }\  \{U >  1/2,\ V >  1/2\},\\
-2, & \text{ if }\ \{U \leq  1/2,\ V \leq  1/2\},\\
0, & \text{ if }\  \{U > 1/2,\ V \leq 1/2\}\cup \{U \leq  1/2,\ V >  1/2\},
\end{cases}\\
XY&=\begin{cases}
1, & \text{ if }\  \{U \leq 1/2,\ V \leq 1/2\}\cup \{U >  1/2,\ V >  1/2\},\\
-1, & \text{ if }\  \{U > 1/2,\ V \leq 1/2\}\cup \{U \leq  1/2,\ V >  1/2\}.
\end{cases}
\end{align*}
Since $C(1/2,1/2)=\bar C(1/2,1/2)$, we have that $\E[X+Y]=2\bar C(1/2,1/2)-2 C(1/2,1/2)=0$, and thus
\begin{align*}
\Cov(X+Y,XY)&=\E[(X+Y)XY] - \E[X+Y]\E[XY] = 2\bar C(1/2,1/2)-2 C(1/2,1/2)-0 = 0,
\end{align*}
which implies that $\mu_\ast(\beta,C)=0$.

\noindent \ref{item:explicit:form:asymptotic:variances:beta})
Using the notation
\begin{align*}
p(C)=\Prob\left(\{U \leq 1/2,\ V \leq  1/2\}\cup \{ U >  1/2,\ V >  1/2\}\right)=C(1/2,1/2)+\bar C(1/2,1/2)=2C(1/2,1/2),
\end{align*}
we have that $\beta(C)=4C(1/2,1/2)-1=2p(C)-1$ and that
\begin{align*}
\sigma_\beta^2(C)=\Var(XY)=4 p(C) (1-p(C))=1-\beta^2(C).
\end{align*}

\noindent \ref{item:upper:lower:bounds:asymptotic:variances:beta})
This immediately follows from \ref{item:explicit:form:asymptotic:variances:beta}) since $0 \leq p(C)\leq 1$.

\noindent \ref{item:lower:bound:asymptotic:variances:beta})
By \ref{item:explicit:form:asymptotic:variances:beta}), $\sigma_\beta^2(C)=0$ if and only if $p(C)=0$ or $1$, that is, $C$ is a TPI or TNI copula.

\noindent \ref{item:upper:bound:asymptotic:variances:beta})
By \ref{item:explicit:form:asymptotic:variances:beta}), $\sigma_\beta^2(C)=1$ if and only if $p(C)=1/2$, that is, $C$ is a balanced copula.

\end{proof}

\begin{proof}[\bf Proof of Proposition~\ref{prop:optimality:blomqvist:beta}]

\noindent \ref{item:optimality:blomqvist:beta:lower})
By Proposition~\ref{prop:asymptotic:variances:beta} Part~\ref{item:lower:bound:asymptotic:variances:beta}, we have that
\begin{align}\label{eq:key:inequalities:optimality:blomqvist:beta:lower}
\underline \sigma_\beta^2(\mathcal D)=\sigma_\beta^2(C_\ast)=0 \leq \underline \sigma_G^2(\mathcal D)\quad \text{for all } G \in \mathcal G_4,
\end{align}
and thus $\underline \sigma_\ast^2(\mathcal H,\mathcal D)=0$ and $\beta \in \underline G_\ast(\mathcal H,\mathcal D)$.

\noindent \ref{item:optimality:blomqvist:beta:upper})
By Proposition~\ref{prop:basic:properties:asymptotic:variance} Part~\ref{item:fundamental:copulas:asymptotic:variance} and Proposition~\ref{prop:asymptotic:variances:beta} Parts~\ref{item:upper:lower:bounds:asymptotic:variances:beta} and~\ref{item:upper:bound:asymptotic:variances:beta}, we have that
\begin{align}\label{eq:key:inequalities:optimality:blomqvist:beta:upper}
\overline \sigma_\beta^2(\mathcal D)=\sigma_\beta^2(\Pi)=1 = \sigma_G^2(\Pi) \leq \overline \sigma_G^2(\mathcal D)\quad \text{for all } G \in \mathcal G_4,
\end{align}
and thus $\overline \sigma_\ast^2(\mathcal H,\mathcal D)=1$ and $\beta \in \overline G_\ast(\mathcal H,\mathcal D)$.

\noindent \ref{item:optimality:blomqvist:beta})
$\beta \in G_\ast(\mathcal H,\mathcal D)$ immediately follows from \ref{item:optimality:blomqvist:beta:lower}) and \ref{item:optimality:blomqvist:beta:upper}).

Finally, all the relations in~\eqref{eq:key:inequalities:optimality:blomqvist:beta:lower} and~\eqref{eq:key:inequalities:optimality:blomqvist:beta:upper} remain valid even if $\sigma_G^2(C)$, $G \in \mathcal G_4$, is replaced by the optimally shifted asymptotic variance $\sigma_{G_{\mu_{\ast}}}^2(C)$.
Therefore, all the optimality results in
\ref{item:optimality:blomqvist:beta:lower}),
\ref{item:optimality:blomqvist:beta:upper}) and
\ref{item:optimality:blomqvist:beta})
hold for correspondingly modified versions of Definitions~\ref{Def:optimal:best:worst:asymptotic:variances} and~\ref{def:preference:optimal:concordance:inducing:distribution}.

\end{proof}

\begin{proof}[\bf Proof of Proposition~\ref{prop:uniqueness:beta:worst:G}]

By Proposition~\ref{prop:optimality:blomqvist:beta}, we have that $\beta \in G_\ast(\mathcal H,\mathcal D)$.
Proposition~\ref{prop:basic:properties:asymptotic:variance} Part~\ref{item:bounds:asymptotic:variance} also yields
$\overline \sigma_G^2(\mathcal D)=1+\Var_G(X^2)$ for all $G \in \mathcal H$.
This upper bound remains valid if $G \in \mathcal H$ is optimally shifted since $(M+W)/2$ is radially symmetric.
Therefore, regardless of whether the optimal shift is taken into account, the optimal worst asymptotic variance $\overline \sigma_{\ast}^2(\mathcal H,\mathcal D)= 1$ is attained if and only if $G \in \mathcal H$ satisfies $\Var_G(X^2)=0$, that is, $X\sim G$ is the standardized symmetric Bernoulli distribution.
Consequently, we have that $\overline G_{\ast}(\mathcal H,\mathcal D)=\{\beta\}$ and thus $G_{\ast}(\mathcal H,\mathcal D)=\{\beta\}$ as desired.
\end{proof}

\begin{proof}[\bf Proof of Proposition~\ref{prop:necessary:conditions:g:attain:optimal:best:asymptotic:variance}]
For $G \in \mathcal H$ and $C \in \mathcal D$ in~\eqref{eq:equivalent:conditions:attaining:lower:bound:asymptotic:variance:bernoulli}, write $(X,Y)=(G^\i(U),G^\i(V))$.

\noindent \ref{item:necessary:conditions:point:mass:case})
In this case, we have that $\Prob(XY=0)>0$ and thus $a \in \IR$ in~\eqref{eq:equivalent:conditions:attaining:lower:bound:asymptotic:variance:bernoulli} necessarily has to be $a=0$.
If $XY\aseq 0$ holds, then $Y=0$ on $\{X\neq 0\}$.
Together with $X\deq Y$, we have that
$\Prob(X\neq 0)\leq \Prob(Y=0)=\Prob(X=0)$,
which leads to the condition $\Prob(X=0)\geq 1/2$.

\noindent \ref{item:necessary:conditions:no:point:mass:case})
In this case, we have that $\Prob(XY=0)=0$ since
\begin{align*}
XY  \begin{cases}
 >0, & \text{ if } \{U\leq 1/2,\ V \leq 1/2\}\cup \{U> 1/2,\ V > 1/2\},\\
 <0,  & \text{ if } \{U\leq 1/2,\ V > 1/2\}\cup \{U> 1/2,\ V \leq  1/2\}.\\
\end{cases}
\end{align*}
Therefore, $a\in \IR$ in~\eqref{eq:equivalent:conditions:attaining:lower:bound:asymptotic:variance:bernoulli} necessarily has to be unequal to $0$.
Since $$\Prob(XY>0)=p(\{U\leq 1/2,\ V \leq 1/2\}\cup \{U> 1/2,\ V > 1/2\})=p(C)$$ and $$\Prob(XY<0)=p(\{U\leq 1/2,\ V > 1/2\}\cup \{U> 1/2,\ V \leq  1/2\})=1-p(C),$$ the random variable $XY$ can never be almost surely a constant if $0<p(C)<1$.
Therefore, it holds that $p(C)=0$ or $1$, and thus $C$ is either TPI or TNI.

Assume that $C$ is TPI.
Then $a>0$ since $\Prob(XY>0)=1$.
By the TPI assumption of $C$, we have that
 \begin{align*}
 X_{+}&=X\ | \ \{U>1/2,V>1/2\}=X\ | \ \{U>1/2\} \sim G_{+},\\
 Y_{+}&=Y\ | \ \{U>1/2,V>1/2\}=Y\ | \ \{V>1/2\} \sim G_{+},\\
 X_{-}&=X\ | \ \{U\leq 1/2,V\leq 1/2\}=X\ | \ \{U\leq 1/2\} \sim G_{-},\\
 Y_{-}&=X\ | \ \{U\leq 1/2,V\leq 1/2\}=Y\ | \ \{V\leq 1/2\} \sim G_{-},
 \end{align*}
 where
\begin{align*}
G_{+}(x)=\begin{cases}
2G(x)-1, & \text{ if } x > 0,\\
0, & \text{ if } x \leq  0,\\
\end{cases}
\quad
G_{-}(x)=\begin{cases}
1, & \text{ if } x > 0,\\
2G(x), & \text{ if } x \leq  0.\\
\end{cases}
\end{align*}
In addition to the equalities $X_{+}\deq Y_{+}$ and $X_{-}\deq Y_{-}$, we have that $X_{+}\deq - X_{-}$ and $Y_{+}\deq - Y_{-}$ since the radial symmetry of $G$ and the condition $\Prob(X=0)=0$ lead to
\begin{align*}
\Prob(-X_{-}\leq x)&= \Prob(X_{-}\geq -x)=1-G_{-}((-x)-)\\
&=\begin{cases}
1-1 = 0, & \text{ if } x < 0,\\
1-2G((-x)-)=1-2(1-G(x))=2G(x)-1,& \text{ if } x \geq 0,
\end{cases}\\&= G_{+}(x).
\end{align*}
 Moreover, since $XY\aseq a$, it holds that
 \begin{align*}
 X_{+}Y_{+}=XY\ | \ \{U>1/2,V>1/2\}\aseq a \quad\text{and}\quad
 X_{-}Y_{-}=XY\ | \ \{U\leq1/2,V\leq 1/2\}\aseq a.
 \end{align*}
Since $X_{+}Y_{+}\aseq a$ and $Y_{+}>0$ a.s., we have that $X_{+} \aseq a/Y_{+}$.
Therefore, Jensen's inequality implies that
\begin{align*}
\E[X_{+}]=\E\left[\frac{a}{Y_{+}}\right]=a \E\left[\frac{1}{Y_{+}}\right]\geq \frac{a}{\E[Y_{+}]}=\frac{a}{\E[X_{+}]},
\end{align*}
which yields $\E[X_{+}]\geq \sqrt{a}$.

Since $X_{+}\deq - X_{-}$ and $\Var(X)=\E[X^2]=1$, we have that
\begin{align*}
1&=\E[X^2]=\Prob\left(U>\frac{1}{2}\right)\E\left[X^2\ \biggl|\ U>\frac{1}{2}\right] +\Prob\left(U\leq \frac{1}{2}\right)\E\left[X^2\ \biggl|\ U\leq \frac{1}{2}\right]
=\frac{1}{2}\E[X_{+}^2] + \frac{1}{2}\E[X_{-}^2] =\E[X_{+}^2],
\end{align*}
and thus $\E[X_{+}^2]=1$.
Using $X_{+}^2 \aseq (a/Y_{+})^2>0$ a.s.\ and Jensen's inequality, we have that
\begin{align*}
1=\E[X_{+}^2] =\E\left[\left(\frac{a}{Y_{+}}\right)^2\right]\geq \frac{a^2}{\E[Y_{+}^2]}=\frac{a^2}{\E[X_{+}^2]},
\end{align*}
which yields $-1\leq a\leq 1$.
Together with $a>0$, we have the inequalities $0<a\leq 1$.
Moreover, $X_{+} \aseq a/Y_{+}$ implies that
\begin{align*}
G_{+}(x)=\Prob(X_{+}\leq x)=\Prob\left(\frac{a}{Y_{+}}\leq x\right)=1-\Prob\left(Y_{+}<\frac{a}{x}\right)=1-G_{+}\left(\frac{a}{x}-\right),
\quad x>0,
\end{align*}
which leads to the identity~\eqref{eq:G:plus:identity}.
The symmetry $\Prob(Z>a^{1/2})=\Prob(Z<a^{1/2})$ for $Z \sim G_{+}$ is obtained as a special case by taking $x=\sqrt{a}>0$ in~\eqref{eq:G:plus:identity}.

Next assume that $C$ is TNI.
Then $a<0$ since $\Prob(XY<0)=1$.
By the TNI assumption, we have that
 \begin{align*}
 X_{+}&=X\ | \ \{U>1/2,V \leq 1/2\}=X\ | \ \{U>1/2\} \sim G_{+},\\
 Y_{+}&=Y\ | \ \{U\leq 1/2,V>1/2\}=Y\ | \ \{V>1/2\} \sim G_{+},\\
 X_{-}&=X\ | \ \{U\leq 1/2,V> 1/2\}=X\ | \ \{U\leq 1/2\} \sim G_{-},\\
 Y_{-}&=X\ | \ \{U > 1/2,V\leq 1/2\}=Y\ | \ \{V\leq 1/2\} \sim G_{-}.
 \end{align*}
 As in the TPI case, it holds that $X_{+}\deq Y_{+}$, $X_{-}\deq Y_{-}$, $X_{+}\deq - X_{+}$ and $Y_{+}\deq - Y_{+}$.
 Moreover, $XY\aseq a$ implies that
 \begin{align*}
 X_{+}Y_{-}=XY\ | \ \{U>1/2,V\leq 1/2\}\aseq a,
\quad
 X_{-}Y_{+}=XY\ | \ \{U\leq1/2,V >  1/2\}\aseq a.
 \end{align*}
 From these equalities, all the necessary conditions derived in the TPI case hold with $-Y_{-},\ -X_{-} \sim G_{+}$ and $-a>0$ since $X_{+}(-Y_{-})\aseq (-X_{-})Y_{+} \aseq  -a$.
\end{proof}

\begin{proof}[\bf Proof of Proposition~\ref{prop:basic:properties:tau:asymptotic:variance}]
For $(U,V),\  (\tilde U,\tilde V)\iidsim C$, write $X=g(U,\tilde U)$ and $Y=g(V,\tilde V)$.

\noindent \ref{item:optimal:shift:tau:asymptotic:variance})
The statement holds when $\Var(X+Y)=0$.
Assume that $\Var(X+Y)>0$.
Then we have that
\begin{align*}
X+Y&=
\begin{cases}
2, & \text{ if }\  \{U >  \tilde U,\ V >  \tilde V\},\\
-2, & \text{ if }\ \{U \leq  \tilde U,\ V \leq  \tilde V\},\\
0, & \text{ if }\  \{U > \tilde U,\ V \leq \tilde V\}\cup \{U \leq  \tilde U,\ V >  \tilde V\},
\end{cases}\\
XY&=\begin{cases}
1, & \text{ if }\  \{U \leq \tilde U,\ V \leq \tilde V\}\cup \{U >  \tilde U,\ V >  \tilde V\},\\
-1, & \text{ if }\  \{U > \tilde U,\ V \leq \tilde V\}\cup \{U \leq  \tilde U,\ V >  \tilde V\}.
\end{cases}
\end{align*}
Then $\E[X+Y]=0$ and $\Cov(X+Y,XY)=0$ by calculation since
\begin{align*}
\Prob(U >  \tilde U,\ V >  \tilde V)
=\int_{[0,1]^2}C(u,v)\,\rd C(u,v).
\end{align*}
Therefore, we have that $\mu_\ast(\tau,C)=0$ as desired.

\noindent \ref{item:explicit:form:tau:asymptotic:variance})
By definition of Kendall's tau~\eqref{eq:kendall:tau}, we have that
\begin{align*}
\Prob(\{U\leq \tilde U,\ V\leq \tilde V\}\cup \{U> \tilde U,\ V> \tilde V\})
= 2\int_{[0,1]^2}C(u,v)\,\rd C(u,v)=p_\tau(C)=\frac{\tau(C)+1}{2},
\end{align*}
and thus $\sigma_{\tau}^2(C)=\Var(XY)=4p_\tau(C)(1-p_\tau(C))=1-\tau^2(C)$.

\noindent \ref{item:bounds:kendall:asymptotic:variance})
This is an immediate consequence from \ref{item:explicit:form:tau:asymptotic:variance}) and $0\leq p_{\tau}(C)\leq 1$.

\noindent \ref{item:lower:bound:kendall:asymptotic:variance})
$\sigma_{\tau}^2(C) = 0$ is attained if and only if $p_\tau(C)=1$ or $0$, that is, $\tau(C)=1$ or $-1$, respectively.
By \cite[Theorem~3]{embrechtsmcneilstraumann2002}, $\tau(C)=1$ or $-1$ if and only if $C=M$ or $W$, respectively.

\noindent \ref{item:upper:bound:kendall:asymptotic:variance})
$\sigma_{\tau}^2(C) = 1$ is attained if and only if $p_\tau(C)=1/2$, that is, $\tau(C)=0$.
When $C=C_{\nu_1}$ or $C=C_{\nu_2}$, the change of sign axiom of measures of concordance in Definition~\ref{def:axioms:MOC} implies that $\tau(C)=\tau(C_{\nu_1})=-\tau(C)$ or $\tau(C)=\tau(C_{\nu_2})=-\tau(C)$, either of which yields $\tau(C)=0$.
\end{proof}

\begin{proof}[\bf Proof of Proposition~\ref{prop:asymptotic:variance:kendall:tau}]
\ref{item:attain:lower:bound:kendall:asymptotic:variance}) and \ref{item:attain:upper:bound:kendall:asymptotic:variance}) immediately follow from Proposition~\ref{prop:basic:properties:tau:asymptotic:variance} Parts~\ref{item:bounds:kendall:asymptotic:variance},~\ref{item:lower:bound:kendall:asymptotic:variance} and~\ref{item:upper:bound:kendall:asymptotic:variance}, and Proposition~\ref{prop:optimality:blomqvist:beta}.

\noindent \ref{itemattain:upper:bound:kendall:asymptotic:variance:Frechet:copulas})
By Proposition~\ref{prop:basic:properties:tau:asymptotic:variance} Part~\ref{item:upper:bound:kendall:asymptotic:variance}, it holds that $\sigma_\tau^2(C)=1$ for $C\in\mathcal C_2$ if and only if $\tau(C)=0$.
For a Fr\'echet copula, we have that $\tau(\mathcal C_{(p_M,p_{\Pi},p_W)}^{\text{F}})=(p_M-p_W)(p_M+p_W+2)/3$; see \cite[Example~5.3]{nelsen2006introduction}.
Therefore, $\tau(\mathcal C^{\text{F}})=0$ holds if and only if $p_M=p_W$, and thus the desired result follows.
\end{proof}

\section*{Acknowledgments}
We are grateful to Alexander Schied and Ruodu Wang at University of Waterloo for their valuable comments.
Takaaki Koike was supported by JSPS KAKENHI Grant Number JP21K13275.
Marius Hofert acknowledges financial support from the Natural Sciences and Engineering Research Council of Canada (RGPIN-2020-04897 and RGPAS-2020-00093).

\bibliographystyle{myjmva}

\end{document}